\definecolor{rouge}{rgb}{0.7,0.00,0.00}
\definecolor{vert}{rgb}{0.00,0.5,0.00}
\definecolor{bleu}{rgb}{0.00,0.00,0.8}
\newtheorem{theorem}{Theorem}[section]
\newtheorem*{theorem*}{Theorem}
\newtheorem{lemma}[theorem]{Lemma}
\newtheorem{corollary}[theorem]{Corollary}
\newtheorem{condition}{Condition}
\newtheorem{conditionA}{A\kern-0.1mm}
\renewcommand\dots{\hbox to 1em{.\hss.\hss.}}
\theoremstyle{definition}
\numberwithin{equation}{section}
\def\bb#1{\mathbb{#1}}
\def\bf#1{\mathbf{#1}}
\def\scr#1{\mathscr{#1}}
\def\geq{\geqslant}
\def\leq{\leqslant}
\newcommand\ee{\varepsilon}
\DeclareMathOperator{\supp}{supp}
\DeclareMathOperator{\Leb}{Leb}
\def\geq{\geqslant}
\def\leq{\leqslant}
\def\Rd {\bb R^d}
\def\Rd*{(\bb R^d)^*}
\def\Pd{{\mathbb{P}}^{d-1}}
\def\Pd*{(\mathbb{P}^{d-1})^*}
\def\bb#1{\mathbb{#1}}
\begin{document}
\title[Conditioned local limit theorems]{Conditioned local limit theorems for products of positive random matrices} 

\author{Ion Grama}
\author{Hui Xiao}

\curraddr[Grama, I.]{Univ Bretagne Sud, CNRS UMR 6205, LMBA, Vannes, France.}
\email{ion.grama@univ-ubs.fr}


\curraddr[Xiao, H.]{Academy of Mathematics and Systems Science, Chinese Academy of Sciences, Beijing 100190, China.}
\email{xiaohui@amss.ac.cn}

\date{\today }
\subjclass[2020]{ Primary 60F17, 60J05, 60J10. Secondary 37C30 }
\keywords{Products of positive random matrices, exit time, random walk conditioned to stay positive, conditioned local limit theorem, harmonic function, duality.}

\begin{abstract}
Let $(g_{n})_{n\geq 1}$ be a sequence of independent and identically distributed positive random $d\times d$ matrices, 
where $d\geq 2$ is an integer.
For any starting point $x \in \mathbb{R}_+^d$ with $|x| = 1$ and $y \in \mathbb R$, we define the exit time 
$\tau_{x, y} = \inf \{ k \geq 1: y + \log |g_k \cdots g_1 x| < 0 \}$.
In this paper, we investigate the conditioned local probability 
$\mathbb{P} (y + \log |g_n \cdots g_1 x| \in z + [0, \Delta], \tau_{x, y} > n)$ under various assumptions on $y$, $z$ and $\Delta$.
For the case where $z = O(\sqrt{n})$, we establish 
an exact asymptotic result as $n \to \infty$, uniformly in $y$ and $\Delta$,
which extends the classical Caravenna conditioned local limit theorem to the case of products of positive random matrices. 
Our proof does not rely on the reversibility techniques.
Furthermore, for arbitrary $z \in \bb R_+$, 
we deduce a uniform upper bound with rate $n^{-3/2}$. 
\end{abstract}

\maketitle

\section{Introduction and main results}

\subsection{Background and motivation}  
For any integer $d \geq 2$, equip $\bb R^d$ with the standard inner product $\langle \cdot, \cdot \rangle$ 
and the vector norm $|\cdot |$ as $|x| = \sum_{i = 1}^d |\langle x, e_i \rangle|$ for $x \in \bb R^d$,   
where $(e_i)_{1\leq i\leq d}$ is the standard orthonormal basis of $\bb R^d$. 
Let $\bb S_+^{d-1} = \{x \in \bb R^{d}_+, | x | = 1\}$, where $\bb R^{d}_+$ is the positive quadrant of $\bb R^d$. 
A $d \times d$ matrix $g$ is called allowable,
if all the entries of $g$ are nonnegative, 
and every row and every column of $g$ contains at least one strictly positive entry. 
Denote by $\mathcal M_+$ the multiplicative semigroup of $d\times d$ allowable matrices
and let $\mu$ be a Borel probability measure on $\mathcal M_+$.

Let $(g_{n})_{n\geq 1}$ be a sequence of  independent and identically distributed  random matrices
defined on a probability space $(\Omega,\mathscr{F},\mathbb{P})$ with common law $\mu$. 
For any starting point $x \in \bb S_+^{d-1}$, let $S_0^x = 0$ and 
\begin{align*}
  S_n^x = \log |g_n \cdots g_1 x|, \quad  n \geq 1. 
\end{align*}
For $x \in \bb S_+^{d-1}$ and $y \in \bb R$, 
let $\tau_{x, y}$ be the first time when the Markov walk $(y + S_k^x)_{k \geq 0}$ becomes negative:
\begin{align*}
\tau_{x, y} = \inf \{ k \geq 1: y + S_k^x < 0 \}. 
\end{align*}
We are interested in analyzing the local probability expressed by:
\begin{align}\label{intro-local-probab}
\bb P \left( y + S_n^x \in z+ [0, \Delta ], \tau_{x,y} > n \right),
\end{align}
where $y\in \bb R$, $z\geq 0$ and $\Delta > 0$ may vary depending on $n$.

In the case where $S_n^x$ is a sum of independent and identically distributed random variables,
many authors have investigated the asymptotic behavior of this probability. We refer, for example, to 
L\'{e}vy \cite{Levy37},    Borovkov \cite{Borovk62, Borovkov04a, Borovkov04b},   Feller \cite{Fel64}, Spitzer \cite{Spitzer},
Bolthausen \cite{Bolth}, Iglehart \cite{Igle74}, Eppel \cite{Eppel-1979},  Bertoin and Doney \cite{BertDoney94},  
Caravenna \cite{Carav05},  
Doney \cite{Don12},  Vatutin and Wachtel \cite{VatWacht09}, 
Kersting and Vatutin \cite{KV17}. 
More recent development can be found in Denisov, Tarasov and Wachtel \cite{DTW24, DTW24b}.  
This type of results is quite useful, particularly in the study of branching processes in random environments
 and branching random walks, see for example \cite{Aid13, AS14, KV17, GMX22, Peigne Pham 2024EJP, GMX25}. 
There are two primary difficulties in extending the local limit theorems to dependent cases. The first arises from the inapplicability of Wiener-Hopf factorization in this context. The second difficulty is associated with the reversibility techniques, which rely on the exchangeability of independent and identically distributed random variables. Feller's work \cite[Chapter XII, section 2]{Fel64} provides detailed insights into this. 
The issue with dependent random variables is that, in general, the reversed random walk $S_k^{x, *} = S_k^x - S_n^x$, $0\leq k\leq n$, does not exhibit the same dependence structure as the direct walk $(S_k^x)_{k \geq 0}$. Consequently, it cannot be treated in the same manner. For example, this is often the case with Markov chains, where the reversed walk typically does not retain the Markov dependence.

An alternative to Wiener-Hopf factorization for random walks with independent increments in $\mathbb{R}^d$ was introduced by Denisov and Wachtel \cite{Den Wachtel 2015}. This method has shown particular utility in dealing with dependent random variables, as evidenced by Grama, Lauvergnat and Le Page \cite{GLL20}, who established a conditioned local limit theorem for additive functionals of finite Markov chains.
For exact asymptotics in the setting of hyperbolic dynamical systems, we refer to Grama, Quint and Xiao \cite{GQX21}. In the case of products of invertible random matrices, precise asymptotics 
in the conditioned local limit theorem--including a ballot-type theorem and Caravenna-type asymptotics--were derived 
by Grama, Quint and Xiao \cite{GQX24a, GQX24b}. However, definitive results for positive matrices remain open.
Recently, Peign\'e and Pham \cite{Peigne Pham 2023} obtained a Caravenna-type asymptotic and bounds for a ballot-type theorem. Yet, their results lack uniformity in the starting point $y$ and interval length $\Delta$, a requirement in certain applications, such as extremal position and Seneta-Heyde scaling for branching random walks considered in \cite{GMX22, GMX25}.

This paper aims to bridge this gap by developing methods to ensure uniformity in $y, z$ and $\Delta$
for conditioned local limit theorems involving products of positive random matrices.
Our first objective is to prove an extension of the Caravenna-type theorem. The second objective is to establish a uniform upper bound in the context of the ballot-type theorem.
Our approach relies on heat kernel techniques introduced in the preprint \cite{GX25},
which differ significantly from those used in \cite{Peigne Pham 2023}. 

Building on the methods of \cite{GQX24a, GQX24b}, we expect to extend these techniques in future work to derive exact asymptotics
in the case of positive matrices.

\subsection{Notations and conditions}

For $g \in \mathcal M_+$, let $\|g\| = \sup_{x \in \bb S_+^{d-1}} |gx|$
be the matrix norm. 
For any allowable matrix $g \in \mathcal M_+$ and $x \in \bb S_+^{d-1}$, 
we write $g \cdot x = \frac{gx}{|gx|}$ for the action of $g$ on $x \in \bb S_+^{d-1}$. 
Set $\iota(g) = \inf_{x\in \bb S_+^{d-1} }|gx|$ and $N(g) = \max \{ \|g\|, \iota(g)^{-1} \}.$ 
Denote by $\mathcal M_+^{\circ} \subset \mathcal M_+$ the subsemigroup of $d \times d$ matrices with strictly positive entries. 
The subsemigroup $\mathcal M_+^{\circ}$ is an ideal of $\mathcal M_+$, i.e., 
for any $g \in \mathcal M_+^{\circ}$ and $g' \in \mathcal M_+$, it holds that $g g' \in \mathcal M_+^{\circ}$ and $g' g \in \mathcal M_+^{\circ}$. 

We shall use the following contraction condition 
which says that, with positive probability,
for some $n \geq 1$, the random matrix product $g_n \cdots g_1$ becomes a strictly positive matrix. 

\begin{conditionA}\label{Condi-AP}
$\bb P ( \bigcup_{n \geq 1} (g_n \cdots g_1 \in \mathcal M_+^{\circ}) ) > 0$. 
\end{conditionA}
Note that, by \cite[Lemma 2.1]{HH01}, condition \ref{Condi-AP} is equivalent to 
$\bb P ( \bigcup_{n \geq 1} (g_n \cdots g_1 \in \mathcal M_+^{\circ}) ) = 1$.

We will need the following moment condition of order $2+\delta$.

\begin{conditionA}\label{CondiMoment}
There exists a constant $\delta>0$  such that 
$\int_{\mathcal M_+} (\log N(g) )^{2+\delta} \mu(dg)  < \infty$.
\end{conditionA}

It is well known (cf.\ \cite{Hen97}) that, 
under the first moment condition $\int_{\mathcal M_+} \log N(g) \mu(dg) < \infty$ and condition \ref{Condi-AP}, 
the following strong law of large numbers for $S_n^x$ holds:
for any $x \in \bb S_+^{d-1}$, 
\begin{align*}
\lim_{ n \to \infty } \frac{ S_n^x }{n}  = \lambda,  \quad  \bb P\mbox{-a.s.} 
\end{align*}
where $\lambda \in \bb R$ is a constant (independent of $x$) called the first Lyapunov exponent of $\mu$. 
The central limit theorem is also established in \cite{Hen97}:
under \ref{Condi-AP} and the second moment condition $\int_{\mathcal M_+} (\log N(g) )^{2} \mu(dg) < \infty$,
for any $x \in \bb S_+^{d-1}$, 
the normalised sum $\frac{S_n^x - n \lambda }{ \sqrt{n} }$ converges in distribution to the Gaussian law 
with mean $0$ and variance $\sigma^2$ given by
\begin{align*}
\sigma^2  = \lim_{n\to\infty} \frac{1}{n} \bb E \left[ (S_n^x - n \lambda )^{2}  \right], 
\end{align*}
where $\sigma^2$ does not depend on $x$.

We will assume that the first Lyapunov exponent $\lambda$ equals zero.
\begin{conditionA}\label{Condi-Lya}
$\lambda = 0$.  
\end{conditionA}

For any starting point $x \in \bb S_+^{d-1}$, let $X_0^x = x$ and $X_n^x = (g_n \cdots g_1) \cdot x$ for $n \geq 1$. 
It is known (see e.g. \cite[Theorem 2.1]{HH08}) that, 
under \ref{Condi-AP}, 
the Markov chain $(X_n^x)_{n \geq 0}$ has a unique invariant probability measure $\nu$ on $\bb S_+^{d-1}$.
Denote by $\supp \nu$ the support of the measure $\nu$.

We say that the measure $\mu$ is {\em arithmetic}, if there is $t>0$ together with $\theta \in [0, 2\pi)$ 
and a continuous function 
$\varphi: \bb S_+^{d-1} \to \bb R$ such that for all $n \geq 1$, $g \in \supp \mu^{\otimes n}$ 
and $x \in \supp\nu$, 
\begin{align*}
e^{ i t  \log |gx|} \varphi(g \cdot x) =  e^{i n \theta} \varphi(x).
\end{align*}
In other words, $\log |g x|$ is contained in $\frac{2 \pi}{t} \bb Z$ 
up to a shift that may depend on $g$ and $x$ through the function $\varphi$. 
We need the following non-arithmeticity condition on $\mu$. 

\begin{conditionA}\label{CondiNonarith}
The measure $\mu$ is not arithmetic. 
\end{conditionA}

This condition is used to prove the ordinary local limit theorem 
and we refer to \cite[Chapter X]{HH01} for more details. 
In particular, condition \ref{CondiNonarith} implies that $\sigma>0$.

For a random variable and a Borel set, we write $\bb E (X; B) = \bb E(X \mathds 1_B)$. 
It is shown in \cite{GLP17, Pham18} that the function 
\begin{align*}
V(x, y) : = \lim_{n \to \infty} \bb E  \Big( y + S_n^x;  \tau_{x, y} > n \Big)
\end{align*}
is well defined and non-negative on $\bb S_+^{d-1} \times \bb R$. 
By \cite{GLP17, Pham18}, if there exists a constant $c_0 > 0$ such that 
\begin{align}\label{condition-positivity-V}
\inf_{x \in \bb S_+^{d-1}} \bb P (S_1^x > c_0) >0, 
\end{align}
then the function $V$ is strictly positive on $\bb S_+^{d-1} \times \bb R_+$. 
Moreover, $V$ is harmonic for the Markov walk $(y+S_n^x)_{n\geq 0}$ killed at leaving $\bb R_+$,
see \cite{GLP17, Pham18} for details.

We also need to introduce similar quantities for the dual Markov walk.
For $1 \leq i \leq n$, we denote $h_i = g_{n-i+1}^{ {\rm T} }$, where $g^{ {\rm T} }$ is the transpose of the matrix $g$. 
For $x \in \bb S_+^{d-1}$, let $X_0^{x,*} = x$ and $S_0^{x,*} = 0$, and for $n \geq 1$,  
\begin{align}\label{def-Xn-Sn-dual}
X_n^{x,*} =  (h_n \cdots h_1) \cdot x,  \quad   S_n^{x, *} = - \log |h_n \cdots h_1 x|. 
\end{align}
For any $x \in \bb S_+^{d-1}$ and $y \geq 0$, let 
\begin{align}\label{def-tau-y-star}
\tau_{x, y}^* = \inf \{ k \geq 1: y + S_k^{x, *} < 0 \}. 
\end{align}
The dual harmonic function 
$V^*(x, y) : = \lim_{n \to \infty} \bb E  \big( y + S_n^{*, x};  \tau^*_{x, y} > n \big)$
 is well defined and non-negative on $\bb S_+^{d-1} \times \bb R$.

To establish a ballot-type conditioned local limit theorem with rate $n^{-3/2}$, 
we need the following Furstenberg-Kesten condition from \cite{FK60}. 
For $1 \leq i, j \leq d$, denote by $g^{i,j} = \langle e_i, g e_j \rangle$ the element of the $i$-th row and $j$-th column of the matrix $g$.  

\begin{conditionA}\label{Condi-FK} 
There exists a constant $\varkappa > 1$ such that  for $\mu$-almost every $g \in \mathcal M_+$,
\begin{align}\label{Inequality-FK60}
 \frac{\max_{1\leq i, j\leq d}  g^{i,j} }{ \min_{1\leq i,j\leq d}  g^{i,j} } \leq \varkappa.
\end{align}
\end{conditionA}

Now we introduce a Banach space to be used to state our main results. 
The space $\mathbb{S}^{d-1}_{+}$ is endowed with the Hilbert cross-ratio metric $\mathbf{d}$:  
$\mathbf{d}(x, x') = \frac{ 1- m(x, x')m(x', x) }{ 1 + m(x, x')m(x', x) }$ 
 for $x, x'  \in \bb S_+^{d-1}$, where 
 $m(x, x') = \min \left\{  \frac{ \langle x, e_i \rangle }{ \langle x', e_i \rangle },  i=1,\ldots, d  \right\}.$ 
Denote by $\mathscr{C}(\bb S_+^{d-1})$ the space of real-valued continuous functions on $\bb S_+^{d-1}$, 
equipped with the norm $\|\varphi\|_{\infty}: =  \sup_{x\in \bb S_+^{d-1}} |\varphi(x)|$. 
Set  
\begin{align*}
\|\varphi\|_{ \scr B }: =  \|\varphi\|_{\infty} + 
  \sup_{x, x' \in \bb S_+^{d-1}: x \neq x'} \frac{|\varphi(x) - \varphi(x')|}{ \bf{d}(x, x') }.  
\end{align*}
The Banach space is defined as $\mathscr{B}:= \left\{ \varphi\in \mathscr{C}(\bb S_+^{d-1}): \|\varphi\|_{ \scr B } < \infty \right\}.$

Consider the following function: 
\begin{align} \label{Def-Levydens}
\psi(y,z)=\frac{1}{\sqrt{2\pi }}
   \left( e^{-\frac{(y-z)^2}{2}}- e^{-\frac{(y+z)^2}{2}} \right),  \quad  y, z \in \bb R.   
\end{align} 
Note that $\psi(y,z)>0$ for any $y, z>0$, and that $\psi(y,0)=0$ for any $y\in \bb R$. 
Let $\Phi$ denote the standard normal distribution function on $\bb R$: 
$\Phi(y) = \frac{1}{\sqrt{2 \pi}} \int_{-\infty}^y e^{- \frac{t^2}{2}} dt$, $y \in \bb R$. 
Consider the normalization factor $H(y): = \int_{\bb R_+} \psi(y, z) dz=2 \Phi( y ) -1$, $y \in \bb R$, 
and set, for any $y,z\in \bb R$, 
\begin{align} \label{def func L-001}
\ell(y,z) =\frac{\psi(y,z)}{H(y)}. 
\end{align}
Then, for any $y\in \bb R$, the function $z\mapsto\ell(y,z)$ is a probability density on $\bb R_+$.
It is shown in \cite{GX21, GX25} that uniformly in $z$ 
over compact sets of $\bb R_+$, 
\begin{align} \label{limit of func ell-001}
\lim_{y\to 0} \ell(y,z) = \lim_{y\to 0} \frac{\psi(y,z)}{H(y)} = \phi^+(z),
\end{align}
where $\phi^+(z) = z e^{-z^{2}/2} \mathds 1_{\{ z \geq 0 \} }$, $z \in \bb R$, is the Rayleigh density function.

For $y\in \bb R$, denote
\begin{align} \label{def-Lx-001}
L(y) = \frac{H(y)}{y},
\end{align}
and  we define $L$ at $x=0$ by continuity: 
$L(0) := \lim_{y\to 0} L(y) = \frac{2}{\sqrt{2\pi}}.$
Note that, $L$ is positive and even on $\bb R$, i.e.\ $L(y)>0$ and $L(y)=L(-y)$, for any $y\in \bb R$. 
For $n \geq 1$, $x\in \bb S_+^{d-1}$ and $y \in \bb R$, set 
\begin{align*}
V_n(x, y) = V(x,y) L\left(\frac{y}{\sigma\sqrt{n}}\right), \quad V^*_n(x, y) = V^*(x,y) L\left(\frac{y}{\sigma\sqrt{n}}\right). 
\end{align*}
It is easy to show that, for any $n \geq 1$, $x \in \bb S_+^{d-1}$ and $y \in \bb R$, 
\begin{align}\label{inequ-Vn-001}
V_n(x, y) \leq c \sqrt{n}, 
\quad 
V^*_n(x, y)  \leq c \sqrt{n}. 
\end{align}

In the sequel $c$ denotes a positive constant and $c_{\alpha}, c_{\alpha, \beta}$ denote positive constants 
depending only on their indices.  
All these constants are subject to change their values every occurrence.

\subsection{Main results}\label{sec-CLLT}

Our first result is concerned with a conditioned local limit theorem of Gnedenko type for products of positive random matrices. 
It is deduced from upper and lower bounds established in Theorem \ref{t-B 002}, 
see Section \ref{sec-proof Th CondLLT Version1}.

\begin{theorem} \label{Thm-nodrift-ysmall}
Assume \ref{Condi-AP}, \ref{Condi-Lya}, \ref{CondiNonarith}, and that \ref{CondiMoment} holds for $\delta\geq 1$.  
Let $\Delta_0 >0$ be a fixed real number. 
Then, there exists a constant $\beta>0$ such that, as $n \to \infty$, uniformly in $x \in \bb S_+^{d-1}$, $y \in \bb R$, $z \in \bb R_+$ and $\Delta \geq \Delta_0$, 
\begin{align}\label{CLLT-thm1-interval}
\bb P \Big(   &  y + S_n^x \in [z, z+\Delta],\, \tau_{x, y} > n   \Big) \notag \\
&=  (1 + o(1)) \frac{ V_n(x, y)  }{ \sigma^2 n}   
 \int_{z}^{z+\Delta}\ell \left( \frac{y}{\sigma\sqrt{n}},  \frac{z'}{\sigma\sqrt{n}} \right) dz'   +  \Delta \frac{1+ V_n(x, y) }{n} O(n^{-\beta}).  
\end{align}
Moreover, under the Furstenberg-Kesten condition \ref{Condi-FK} (in place of \ref{Condi-AP}), 
the requirement $\delta \geq 1$ can be weakened to $\delta > 0$. 
\end{theorem}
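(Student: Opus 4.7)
\emph{Overall strategy.} My plan is to derive the claimed asymptotic by establishing matching upper and lower bounds for $\bb P(y + S_n^x \in [z, z+\Delta],\, \tau_{x,y} > n)$ that agree up to the remainder $\Delta \frac{1+V_n(x,y)}{n} O(n^{-\beta})$; combining these one-sided estimates (the content of Theorem~\ref{t-B 002}) then yields (\ref{CLLT-thm1-interval}). Both bounds start from a factorisation at an intermediate time, say $m = \lfloor n/2 \rfloor$. By the Markov property for the chain $(X_k^x, y + S_k^x)_{k \geq 0}$,
\begin{align*}
\bb P \big( y + S_n^x \in [z, z+\Delta],\, \tau_{x,y} > n \big)
= \bb E \big[ F_{n-m} ( X_m^x, y + S_m^x );\, \tau_{x,y} > m \big],
\end{align*}
where $F_k(x', y') = \bb P(y' + S_k^{x'} \in [z, z+\Delta],\, \tau_{x', y'} > k)$. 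The two bounds are then obtained by approximating each factor from above and below.

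\emph{Asymptotics and matching.} I would analyse the outer expectation via a conditioned central limit theorem: the joint law of $(X_m^x, (y + S_m^x)/(\sigma\sqrt m))$ on $\{\tau_{x,y} > m\}$, reweighted by $V_n(x,y)/(\sigma\sqrt n)$, converges to the product of the stationary measure $\nu$ on $\bb S_+^{d-1}$ with the density $y' \mapsto \ell(y/(\sigma\sqrt n),\, y'/(\sigma\sqrt n))$ on $\bb R_+$. This step relies on the harmonic function $V$, the normalisation $L$ encoded in $V_n$, and the spectral gap of the Markov transfer operator on $\mathscr B$ (with $\sigma > 0$ guaranteed by \ref{CondiNonarith}). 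For the inner factor $F_{n-m}(x', y')$, I would invoke duality: after time reversal the positivity constraint $\tau_{x', y'} > n-m$ together with the terminal window $[z, z+\Delta]$ is encoded by the dual walk $(S_k^{\cdot,*})$, and an ordinary local limit theorem for $S_{n-m}^{x'}$ weighted by the dual harmonic function $V^*$ should yield $F_{n-m}(x', y') \sim \Delta \cdot V^*(\cdot, z - y')/(\sigma^2(n-m))$ times a Gaussian density at scale $\sqrt n$. Multiplying the two asymptotics and integrating in $y'$ collapses the Gaussian factor with $V^*$ (integrated against $\ell$) and reproduces $\int_{z}^{z+\Delta}\ell(y/(\sigma\sqrt n), z'/(\sigma\sqrt n))\,dz'$ with prefactor $V_n(x,y)/(\sigma^2 n)$. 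Uniformity in $y \in \bb R$, $z \in \bb R_+$, and $\Delta \geq \Delta_0$ is maintained by tracking remainders in both the small-$y$ regime (where $V_n$ is small) and the regime $y \asymp \sqrt n$ (where $V_n \lesssim \sqrt n$ by (\ref{inequ-Vn-001})); boundary errors are absorbed into the $(1+V_n(x,y))$ factor of the remainder.

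\emph{Main obstacle.} The essential difficulty, flagged in the introduction, is that the time-reversed walk does not share the dependence structure of the forward walk, so Feller's reversibility argument from the i.i.d.\ case is unavailable; duality must therefore be handled at the level of $V^*$ rather than by a change of summation order. To this end the plan is to rely on the heat-kernel techniques from \cite{GX25}, which encode the duality at the level of the transfer operator on $\mathscr B$ and place $V$ and $V^*$ on symmetric footing. A secondary technical point is the weakening of the moment condition from $\delta \geq 1$ to $\delta > 0$ under \ref{Condi-FK}: the uniform bound (\ref{Inequality-FK60}) forces strong contraction on $\bb S_+^{d-1}$ and hence a sharper Lasota--Yorke inequality on $\mathscr B$, which I expect to reduce the moment assumption needed for the quantitative Berry--Esseen and local limit estimates in both factors above.
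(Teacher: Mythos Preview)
Your proposal has a genuine gap in the treatment of the inner factor. You write that for $F_{n-m}(x',y')$ you would ``invoke duality: after time reversal the positivity constraint $\tau_{x',y'}>n-m$ together with the terminal window $[z,z+\Delta]$ is encoded by the dual walk $(S_k^{\cdot,*})$'', and you bring in the dual harmonic function $V^*$. But this is precisely the step that the paper argues is \emph{unavailable} under condition~\ref{Condi-AP} alone: the formal reversed walk $(-S_n^x+S_k^x)_{k\leq n}$ is not a product of random matrices, and the approximate duality relation~\eqref{intro-duality} linking it to $(S_k^{x,*})$ requires the Furstenberg--Kesten condition~\ref{Condi-FK}. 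The paper uses duality and $V^*$ only for Theorem~\ref{Thm-CLLT-small-x}, where \ref{Condi-FK} is assumed; for Theorem~\ref{Thm-nodrift-ysmall} the whole point (stated explicitly in the introduction) is that~\eqref{CLLT-thm1-interval} is proved \emph{without} reversibility. Your heat-kernel remark does not fix this: the heat-kernel identity (Lemma~\ref{convol-phi-psi-001}) is a purely analytic statement about $\phi_v*\ell_{1-v}^{\pm}$ and says nothing about $V^*$ or the reversed chain.

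The paper's actual mechanism is quite different and hinges on an \emph{asymmetric} split: one takes $m=[n^{1-\beta}]$ (not $m=[n/2]$) and $k=n-m$. The first $k$ steps carry the conditioning and are handled by the conditioned integral limit theorem (Theorem~\ref{Theor-probIntUN-002}), producing $V_n(x,y)$ and the density $\ell$. The last $m$ steps are short; for the upper bound one simply drops the constraint $\tau_{x',y'}>m$ and applies the \emph{ordinary} local limit theorem (Theorem~\ref{LLT-general}), obtaining a normal density $\phi_{m/n}$. The convolution $\phi_{\delta_n}*\ell_{1-\delta_n}^+$ then collapses to $\ell$ via Lemma~\ref{convol-phi-psi-001}. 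For the lower bound, the discarded piece $I_{n,2}$ (where the walk exits $\bb R_+$ during the last $m$ steps) is controlled by feeding it back into the already-established upper bound together with the Fuk-type inequality of Lemma~\ref{FK-joint-inequality}; no $V^*$ appears anywhere. With your choice $m=[n/2]$ this would fail: the probability of exiting during the last half is not small, so the error term $I_{n,2}$ cannot be absorbed. Finally, the relaxation from $\delta\geq 1$ to $\delta>0$ under~\ref{Condi-FK} is not a Berry--Esseen issue but comes from the improved $\mathscr H$-norm bound in Lemma~\ref{Lem_Inequality_Aoverline}: under~\ref{Condi-FK} the factor $m^{(4+\delta)/(8+4\delta)}$ in~\eqref{inequ-convo-funcs} disappears (via the strict contraction of Lemma~\ref{Lem-contractivity}), which is what controls the $\|H_m\|_{\mathscr H}$ term in~\eqref{eqt-A 001_Lower}.
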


As a first application, we derive the following extension of the Caravenna-type result:  as $n \to \infty$, 
uniformly for $x \in \bb S_+^{d-1}$, $y, z, \Delta \in \mathbb{R}$ satisfying $\frac{y}{\sqrt{n}} \to 0$, 
$z \in \bb R_+$, and $\Delta \geq \Delta_0$,  we have:
\begin{align} \label{non-lattice Caravenna-result-with-rate-007}
\bb P \Big(  y + S_n^x \in [z, z+\Delta],\, \tau_{x, y} > n   \Big)
& = (1 + o(1))  \Delta \frac{2V(x, y)}{\sqrt{2\pi}\sigma^2 n}  \int_{z}^{z+\Delta} \phi^+\left( \frac{z'}{\sigma \sqrt{n}} \right) dz'  \notag\\
& \quad + \Delta \frac{1 + V(x, y)}{n} \, O \left(n^{-\beta} + \frac{|y|}{\sqrt{n}} \right).
\end{align}
To derive \eqref{non-lattice Caravenna-result-with-rate-007}, we rely on the asymptotic relation
\eqref{limit of func ell-001} and $\lim_{y\to 0} L(y) = \frac{2}{\sqrt{2\pi}}$ 
(more precisely we need to use \cite[Lemma 2.3]{GX-2024-CCLT} and \cite[Lemma 5.4]{GX25}).

As a second application, we derive the following result:  as $n \to \infty$,   
uniformly for $x \in \bb S_+^{d-1}$, $y, z, \Delta \in \mathbb{R}$ satisfying 
$y \to \infty$, $z \in \bb R_+$ and $\Delta \geq \Delta_0$, we have:
\begin{align} \label{non-lattice Caravenna-result-with-rate-007-bb}
\bb P \Big(  y + S_n^x \in [z, z+\Delta],\, \tau_{x, y} > n   \Big)
& =  \frac{ 1 + o(1)  }{ \sigma \sqrt{n}}   
 \int_{z}^{z+\Delta} \psi \left( \frac{y}{\sigma\sqrt{n}},  \frac{z'}{\sigma\sqrt{n}} \right) dz' \notag\\
& \quad  +  \Delta \frac{1+ V_n(x, y) }{n} O(n^{-\beta}).
\end{align}
The proof of \eqref{non-lattice Caravenna-result-with-rate-007-bb} relies on \eqref{def func L-001}, \eqref{inequ-Vn-001}
and the fact that $V(x, y)/y \to 1$ as $y \to \infty$.

Very recently, under the Furtenberg-Kesten condition \ref{Condi-FK}, 
a result similar to \eqref{non-lattice Caravenna-result-with-rate-007}
 was obtained by Peign\'e and Pham \cite{Peigne Pham 2023}. 
 Our result improves over this results in several aspects. 
 Firstly, 
 we do not require the Furtenberg-Kesten condition; 
 instead, we only need a much weaker positivity condition \ref{Condi-AP}. 
 Secondly,  we relax the exponential moment condition used in \cite{Peigne Pham 2023} 
 to the polynomial moments. 
Thirdly, the results in \cite{Peigne Pham 2023} hold only for fixed $y$ and $\Delta$, 
while Theorem \ref{Thm-nodrift-ysmall} allows for $y$, $z$ and $\Delta$ depending on $n$. 
In particular, when $\frac{y}{\sqrt{n}}\to 0$ as $n\to\infty$, our result extends that in \cite{Peigne Pham 2023}.
In the case where the starting point $y$ is proportional to $\sqrt{n}$, 
another asymptotic holds true, as one can see from \eqref{non-lattice Caravenna-result-with-rate-007-bb}. 
It is also important to note that, in contrast to \cite{Peigne Pham 2023}, 
 our method of the proof do not need to employ reversibility, thus allowing for weaker conditions. 

Moreover, our local limit theorem applies to the pair $(X_n^x, S_n^x)$ with general target functions on both components, 
thereby enlarging the domain of applications. 
 Let $\mathcal F$ be the set of real-valued functions $F$ on $\bb S_+^{d-1} \times \bb R$ satisfying
the following two conditions:
for any $y \in \bb R$, the function $x \mapsto F(x, y)$ is Lipschitz continuous on $\bb S_+^{d-1}$ with respect to the distance $\bf d$;   
for any $x \in \bb S_+^{d-1}$, the function $y \mapsto F(x, y)$ is continuous compactly supported on $\bb R$.
 
 \begin{theorem} \label{Thm-nodrift-ysmall-target}
Assume \ref{Condi-AP}, \ref{Condi-Lya}, \ref{CondiNonarith}, and that \ref{CondiMoment} holds for $\delta\geq 1$.  
Let $\Delta_0 >0$ be a fixed real number. 
Then, there exists a constant $\beta>0$ such that, for any $F \in \mathcal F$, 
as $n \to \infty$, 
uniformly in $x \in \bb S_+^{d-1}$, $y \in \bb R$ and $z \in \bb R_+$, 
\begin{align*} 
& \bb E \Big[ F \left(X_n^x, y + S_{n}^x - z \right);  \tau_{x, y} > n \Big]  \notag\\
&  = (1 + o(1)) \frac{ V_n(x, y) }{\sigma^2 n}
 \int_{\bb S_+^{d-1} \times \bb R_+}    F \left(x, z' - z \right)  
   \ell \left(\frac{y}{\sigma\sqrt{n}} ,\frac{z'}{\sigma\sqrt{n}}\right) \nu(dx)  dz' 
  +  \frac{1+ V_n(x, y) }{n} O(n^{-\beta}). 
\end{align*}
Moreover, under the Furstenberg-Kesten condition \ref{Condi-FK} (in place of \ref{Condi-AP}), 
the requirement $\delta \geq 1$ can be weakened to $\delta > 0$. 
\end{theorem}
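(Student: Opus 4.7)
The plan is to reduce Theorem \ref{Thm-nodrift-ysmall-target} to Theorem \ref{Thm-nodrift-ysmall} by a two-stage approximation that separates the spatial and temporal components of the target $F$. First, by bilinearity and a Stone--Weierstrass-type density argument in $\mathcal{F}$, it is enough to handle target functions of the product form $F(x,z')=\varphi(x)g(z')$ with $\varphi\in\mathscr{B}$ and $g\in C_c(\bb R)$. Second, since $g$ is uniformly continuous with compact support, I would approximate it uniformly by Riemann step functions $g_\Delta(z')=\sum_j g(z_j)\mathds{1}_{[z_j,z_j+\Delta]}(z')$ on a grid of mesh $\Delta>0$ to be sent to zero at the end. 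The problem then reduces to controlling, uniformly in $(x,y,z,z')$,
\begin{align*}
J_n(\varphi;z'):=\bb E\big[\varphi(X_n^x)\,\mathds{1}_{\{y+S_n^x-z\in[z',z'+\Delta]\}};\,\tau_{x,y}>n\big].
\end{align*}

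The heart of the argument is a decoupling statement asserting that, uniformly in the parameters,
\begin{align*}
J_n(\varphi;z')=\bigg(\int_{\bb S_+^{d-1}}\varphi\,d\nu\bigg)\,\bb P\big(y+S_n^x-z\in[z',z'+\Delta],\,\tau_{x,y}>n\big)+\mathrm{error}.
\end{align*}
I would prove this by invoking the Markov property at an intermediate time $m=n-k$ with $k=k_n\to\infty$ and $k_n=o(n)$ (for instance $k_n\asymp\log n$ to exploit the spectral gap). Setting
\begin{align*}
\Psi_k(x',y'):=\bb E\big[\varphi(X_k^{x'})\,\mathds{1}_{\{y'+S_k^{x'}-z\in[z',z'+\Delta]\}};\,\tau_{x',y'}>k\big],
\end{align*}
one has $J_n(\varphi;z')=\bb E[\mathds{1}_{\{\tau_{x,y}>m\}}\,\Psi_k(X_m^x,y+S_m^x)]$. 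Under the event $\{\tau_{x,y}>m\}$ the value $y+S_m^x$ is typically of order $\sqrt{m}\gg\sqrt{k}$, so the constraint $\tau_{x',y'}>k$ inside $\Psi_k$ is negligible apart from an exceptional event of probability $O(\sqrt{k/m})$. Combined with the spectral gap of the transfer operator on $\mathscr{B}$ (cf.\ \cite{HH01,HH08}) yielding $\bb E[\varphi(X_k^{x'})]=\int\varphi\,d\nu+O(e^{-ck}\|\varphi\|_{\mathscr{B}})$ and with the unconditional joint local limit theorem for $(X_k^{x'},S_k^{x'})$, this gives $\Psi_k(x',y')\approx(\int\varphi\,d\nu)\,\bb P[y'+S_k^{x'}-z\in[z',z'+\Delta]]$; reapplying the Markov property at time $m$ then recombines the survival indicator with the last factor, producing the conditioned probability on the right-hand side of the decoupling.

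With the decoupling established, one substitutes the asymptotic of Theorem \ref{Thm-nodrift-ysmall} (or equivalently the bounds of Theorem \ref{t-B 002}) for the surviving probability, performs the Riemann sum in $j$, and passes to the limit $\Delta\to 0$ using the uniform continuity of $g$ and of $(y,z)\mapsto\ell(y,z)$; the resulting sum converges to the claimed integral against $\ell$ with $\int\varphi\,d\nu$ absorbed into the integrand. The main obstacle is making the decoupling quantitative and uniform across the whole parameter range, particularly in the delicate boundary regimes where $y/\sqrt{n}\to 0$ or where $z'/\sqrt{n}\to 0$ (so that $\ell$ vanishes and the leading order is small). In these regimes one must carefully balance three competing error sources: the mixing contribution $e^{-ck_n}$, the survival correction $\sqrt{k_n/n}$, and the remainder $\Delta(1+V_n(x,y))n^{-1-\beta}$ inherited from Theorem \ref{Thm-nodrift-ysmall}; an optimal tuning of $k_n$ and $\Delta$ is required. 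The extension to the Furstenberg--Kesten regime with the weakened moment threshold $\delta>0$ then follows at once, since the entire argument only uses Theorem \ref{Thm-nodrift-ysmall} as a black box and the improvement under \ref{Condi-FK} is already built into it.
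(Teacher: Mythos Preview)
Your approach takes a genuinely different and considerably longer route than the paper. The paper derives Theorem~\ref{Thm-nodrift-ysmall-target} in one line as a direct consequence of Theorem~\ref{t-B 002}, which is already formulated for a general target function $F$ on the pair $(X_n^x,S_n^x)$. Given $F\in\mathcal F$, one simply applies the upper bound \eqref{eqt-A 001} and lower bound \eqref{eqt-A 002} with the shifted target $\tilde F(x',u)=F(x',u-z)\mathds 1_{\{u\geq 0\}}$ and with $\ee_n$-dominating and $\ee_n$-dominated functions $H,M\in\mathscr H$ obtained by thickening/thinning $\tilde F$ in the second variable; since $F$ is Lipschitz in $x$ and continuous with compact support in the second variable, the norms $\|H\|_{\mathscr H}$, $\|M\|_{\mathscr H}$, $\|H\|_{\nu\otimes\Leb}$ are bounded uniformly in $z$, and $\int H\ell$, $\int M\ell$ both converge to $\int F\ell$ as $\ee_n\to 0$. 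No Stone--Weierstrass, no decoupling, no intermediate time are needed: the spatial component is already handled inside the local limit theorem (Theorem~\ref{LLT-general}) used in the proof of Theorem~\ref{t-B 002}.

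Your detour through product targets and a decoupling at time $m=n-k$ is not wrong in spirit, but the sketch leaves real work open and risks losing the claimed rate. Two concrete issues: first, after you strip the constraint $\tau_{x',y'}>k$ from $\Psi_k$ and recombine via the Markov property you land on $\bb P(y+S_n^x\in[\,\cdot\,],\,\tau_{x,y}>m)$ rather than $\tau_{x,y}>n$, so you must separately bound the difference, which is precisely the $I_{n,2}$-type term handled in the proof of the lower bound \eqref{eqt-A 002}; this is doable but not a triviality, and your heuristic ``exceptional event of probability $O(\sqrt{k/m})$'' does not by itself give an error of size $\frac{1+V_n}{n}O(n^{-\beta})$. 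Second, to make $\bb E[(\varphi-\nu(\varphi))(X_k^{x'})\mathds 1_{\{\cdot\}}]$ small you need the joint local limit theorem on the last $k$ steps, whose error is $O(k^{-(1+\delta)/2})$; matching this against the target error forces $k\geq n^{1/2+}$, which in turn makes the survival-correction error harder to control. None of this is insurmountable, but it reproduces, piece by piece, exactly the ingredients already packaged into Theorem~\ref{t-B 002}. The cleanest fix is to drop the decoupling entirely and apply Theorem~\ref{t-B 002} directly to $\tilde F$.
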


Our second result gives an uniform upper bound of order $n^{-3/2}$.

\begin{theorem} \label{Thm-CLLT-small-x}
Assume \ref{CondiMoment}, \ref{Condi-Lya}, \ref{CondiNonarith} and \ref{Condi-FK}. 
Let $\Delta_0 >0$ be a fixed real number. 
Then there exists a constant $c>0$ such that for any $n \geq 1$, $x, x' \in \bb S_+^{d-1}$, $y \in \bb R$, 
$z \in \bb R_+$ and $\Delta \geq \Delta_0$, 
\begin{align}\label{uniform-bound-n32}
\bb P \left( y + S_n^x \in  [z, z + \Delta],   \tau_{x, y} > n   \right)
\leq c \frac{ \Delta}{n^{3/2}} \left( 1 + V_n(x, y) \right) \left( 1  + V^*_n(x', z + \Delta) \right). 
\end{align} 
\end{theorem}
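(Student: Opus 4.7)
My plan is to establish the uniform upper bound by splitting time at the midpoint $m := \lfloor n/2 \rfloor$ (with $m' := n - m$), applying the Markov property of $(X_k^x, y + S_k^x)_{k \geq 0}$ at time $m$, and combining Theorem \ref{Thm-nodrift-ysmall} on the first half with its dual analogue on the second half. Discretizing the value of $y + S_m^x$ into unit intervals indexed by $j \in \bb Z_+$ yields
\begin{align*}
\bb P\bigl(y + S_n^x \in [z, z+\Delta],\, \tau_{x, y} > n\bigr)
\;\leq\; \sum_{j \geq 0} p_1(j)\, Q(j),
\end{align*}
where $p_1(j) := \bb P(y + S_m^x \in [j, j+1],\, \tau_{x, y} > m)$ and
$Q(j) := \sup_{x'' \in \bb S_+^{d-1},\, y' \in [j, j+1]} \bb P(y' + S_{m'}^{x''} \in [z, z+\Delta],\, \tau_{x'', y'} > m')$; taking the supremum in $x''$ is justified because under \ref{Condi-FK} the chain contracts exponentially in the Hilbert projective metric, and hence the probability depends only very weakly on the starting direction.

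Theorem \ref{Thm-nodrift-ysmall} applied to the unit-length target $[j, j+1]$ gives, using Lipschitz continuity of $\ell$ in its second argument and the fact that $[j,j+1]$ is short on the scale $\sqrt m$,
\begin{align*}
p_1(j) \;\leq\; \frac{c\, V_n(x, y)}{m}\, \ell\!\left(\tfrac{y}{\sigma\sqrt m}, \tfrac{j}{\sigma\sqrt m}\right) + \frac{1 + V_n(x, y)}{m^{1+\beta}}\, O(1).
\end{align*}
For $Q(j)$, a naive forward application of Theorem \ref{Thm-nodrift-ysmall} would produce $V_n(x'', y')$ rather than the required $V^*_n(x', z+\Delta)$. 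Instead I would apply the \emph{dual} version of Theorem \ref{Thm-nodrift-ysmall} to the dual walk $(S_k^{\cdot, *})_{k \leq m'}$ built from the time-reversed sequence $h_i := g_{m' - i + 1}^{\mathrm T}$, using the cocycle identity $|g w| = \langle g^{\mathrm T} \mathbf{1}, w \rangle$ to recast $|g_{m'} \cdots g_1 x''|$ in terms of the dual product $h_{m'} \cdots h_1 \mathbf{1}$, and then reducing to the dual walk starting from the arbitrary fixed direction $x'$ via uniform Hilbert-metric contraction under \ref{Condi-FK}, up to a controllable bounded error. This yields
\begin{align*}
Q(j) \;\leq\; \frac{c\, \Delta\, V^*_n(x', z + \Delta)}{m'}\, \ell\!\left(\tfrac{z + \Delta}{\sigma\sqrt{m'}}, \tfrac{j}{\sigma\sqrt{m'}}\right) + \frac{\Delta\,(1 + V^*_n(x', z + \Delta))}{(m')^{1+\beta}}\, O(1).
\end{align*}

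Multiplying the two estimates and using $m \asymp m' \asymp n/2$, the leading contribution is
\begin{align*}
\frac{c\, \Delta\, V_n(x, y)\, V^*_n(x', z + \Delta)}{n^2}\, \Sigma,
\qquad
\Sigma := \sum_{j \geq 0} \ell\!\left(\tfrac{y}{\sigma\sqrt m}, \tfrac{j}{\sigma\sqrt m}\right) \ell\!\left(\tfrac{z+\Delta}{\sigma\sqrt{m'}}, \tfrac{j}{\sigma\sqrt{m'}}\right).
\end{align*}
A Riemann-sum comparison gives $\Sigma \leq c \sqrt n \int_0^\infty \ell(a, u)\, \ell(b, u)\, du$ with $a := y/(\sigma\sqrt m)$ and $b := (z + \Delta)/(\sigma\sqrt{m'})$; the remaining integral is uniformly bounded in $a, b \geq 0$ since each $\ell(a, \cdot)$ is a uniformly bounded $L^2$ probability density on $\bb R_+$. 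This produces the main bound $c\, \Delta\, V_n(x, y)\, V^*_n(x', z+\Delta) / n^{3/2}$. The cross-terms involving the $O(m^{-\beta})$ errors, summed over $j$ within the effective range $j \lesssim \sqrt n \log n$ (outside of which $\ell$ decays Gaussian-fast), contribute at the same order times an extra $n^{-\beta}$ factor and are therefore subdominant. The additive $+1$ in the stated bound absorbs the regime where $V_n$ or $V^*_n$ is small.

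The principal technical difficulty is establishing the dual estimate on $Q(j)$: reversibility arguments familiar from sums of i.i.d.\ real random variables fail here, because reversing the matrix sequence produces the iid sequence of transposes $(g_{m'}^{\mathrm T}, \dots, g_1^{\mathrm T})$ with a genuinely different law. Controlling this requires the cocycle identity together with uniform Hilbert-metric contraction from \ref{Condi-FK}, which is precisely why \ref{Condi-FK} (rather than the weaker positivity \ref{Condi-AP}) is the natural hypothesis needed for an $n^{-3/2}$ upper bound that is uniform in all of $x, x', y, z, \Delta$.
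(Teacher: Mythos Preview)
Your overall strategy is correct and matches the paper's: split at the midpoint $m=\lfloor n/2\rfloor$, control the first half by the forward Caravenna-type bound, and control the second half by its dual via the approximate reversibility $-S_m^{x'}+S_k^{x'}=S_{m-k}^{x',*}+O(1)$ afforded by \ref{Condi-FK}. The execution differs. Instead of discretizing the midpoint value into $\sum_j p_1(j)Q(j)$, the paper applies the non-asymptotic bound \eqref{eqt-A 001} once, to the first $k=n-m$ steps, with target function $H_{m,\ee}(x',y')=\bb E[H*\rho_\ee(y'+S_m^{x'})\,\chi_\ee(y'+\ee+\min_{1\leq j\leq m}S_j^{x'})]$ encoding the last $m$ steps; this reduces everything to bounding $\|H_{m,\ee}\|_{\nu\otimes\Leb}$ and $\|H_{m,\ee}\|_{\scr H}$. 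For the first norm, the change of variable $y'+S_m^{x'}=z'$ turns the survival factor into $\chi_\ee(z'-S_m^{x'}+\min_j S_j^{x'})\leq \mathds 1\{\tau^*_{x',z'+\gamma}>m-1\}$ by the approximate duality, giving $\|H_{m,\ee}\|_{\nu\otimes\Leb}\leq c\,m^{-1/2}\int \widetilde H(z')(1+V^*_m(x',z'))\,dz'$; the $\scr H$-norm is handled by the companion Lemma \ref{Lem_HolderNormPsi}. This function-space route avoids discretization and the associated bookkeeping.

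There is one genuine gap in your write-up. The Error$\times$Error cross-term is a sum over all $j\geq 0$ of $\frac{c(1+V_n)}{n^{1+\beta}}\cdot\frac{c\Delta(1+V_n^*)}{n^{1+\beta}}$, and the remainder in Theorem \ref{Thm-nodrift-ysmall} does \emph{not} decay in $j$; your restriction to $j\lesssim\sqrt n\log n$ ``because $\ell$ decays Gaussian-fast'' addresses only the main term. The repair is easy and in fact streamlines the whole argument: drop the $\ell$-refinement entirely and use $\sum_j p_1(j)=\bb P(\tau_{x,y}>m)\leq c(1+V_n(x,y))/\sqrt m$ from \eqref{bound-tau-y-bis}, together with the uniform pointwise bound $\sup_j Q(j)\leq c\Delta(1+V^*_n(x',z+\Delta))/m'$ obtained by applying Corollary \ref{Cor-Caravenna-001} to the dual walk after the approximate-duality conversion you describe. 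Multiplying these two gives the claimed $n^{-3/2}$ bound directly.
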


If in addition we assume \eqref{condition-positivity-V},
the bound \eqref{uniform-bound-n32} becomes: for any $n \geq 1$, $x, x' \in \bb S_+^{d-1}$, 
$y, z \in \bb R_+$ and $\Delta \geq \Delta_0$, 
\begin{align}\label{uniform-bound-n32-002}
\bb P \left( y + S_n^x \in  [z, z + \Delta],   \tau_{x, y} > n   \right)
\leq c \frac{ \Delta}{n^{3/2}} V_n(x, y)  V^*_n(x', z + \Delta). 
\end{align}

For fixed $y$ and $z$, a  bound similar to \eqref{uniform-bound-n32-002} was established in \cite{Peigne Pham 2023}.
The uniform version plays a key role in several applications, particularly in the study of the extremal position and asymptotic behavior of the derivative martingale for branching random walks on the semigroup of positive matrices--as demonstrated in \cite{GMX22, GMX25}, where the bound \eqref{uniform-bound-n32} is crucially applied.
These applications are among the primary motivations for our work.
Moreover, for such applications, it is essential to derive these results under a change of measure.
The corresponding extensions can be obtained straightforwardly, so we omit the details here.

\subsection{Proof method}
 Our method for establishing Theorem \ref{Thm-nodrift-ysmall} relies on the convolution technique, avoiding the need for the Wiener-Hopf factorization which is typically employed in the case of independent random variables. 
We follow the methodology developed in \cite{GX21, GX25}, 
although the situation differ from the latter because we must consider dependence.
We adapt techniques from \cite{GQX21}, which were originally designed for hyperbolic dynamical systems, but
as such, our approach may be of interest for other models. 

The key idea is to leverage the Markov property to express the local probability \eqref{intro-local-probab} as a limit convolution of two  components. One of these components corresponds to a standard local limit theorem, while the other relates to a conditioned integral limit theorem. It turns out that this convolution yields the main term in \eqref{CLLT-thm1-interval}, 
which stands as one of the principal advancement in our paper. 
This approach could be considered a refinement of methods found in Caravenna \cite{Carav05}, Doney \cite{Don12}, and Denisov and Wachtel \cite{Den Wachtel 2015}.

Another subtle aspect of the proof is the utilization of the reversibility technique. For sums of independent and identically distributed random variables, reversibility plays a significant role in proving Gnedenko-type conditioned local limit theorems (cf. \cite{Carav05, Don12, Den Wachtel 2015}). Typically, it is employed to bound the remainder terms that arise in the convolution-based approach described earlier.
In our paper we demonstrate that a Gnedenko-type conditioned limit theorem \eqref{CLLT-thm1-interval} can be established without relying on the reversibility technique, which constitutes our second advancement. This approach can also be applied to establish a conditioned local limit theorem of Gnedenko type for a more general class of Markov chains.

We will comment now on the obtention of Theorem \ref{Thm-CLLT-small-x}.
One of the key aspects is the essential use of the reversibility technique. 
The problem is that, in the context of products of positive random matrices, the formal reversed walk $(- S_n^x + S_k^x)_{1 \leq k \leq n}$ is no longer a product of random matrices. Consequently, the conditioned limit theory developed earlier does not apply to this situation.
To address this challenge, we replace the formal reversed walk with an approximate reversed walk constructed using the reversed product of random matrices $g_1^{*} \cdots g_n^{*}$, where $g^{*}$ represents the adjoint of the matrix $g$. This approximation becomes feasible due to the Furstenberg-Kesten condition \ref{Condi-FK}, which enables us to establish the following approximate duality relation between the Markov walks $S_n^x = \log |g_n \cdots g_1 x|$ and $S_n^{x, *} = - \log |g_1^{*} \cdots g_n^{*} x|:$
there exists a constant $c \in (0, \infty)$ such that, for any $0 \leq k \leq n$, 
\begin{align}\label{intro-duality}
S_{n-k}^{x, *} - c \leq  - S_n^x + S_k^x  \leq   S_{n-k}^{x, *} + c.  
\end{align}
Theorem \ref{Thm-CLLT-small-x} is obtained by using a succession of results: 
the approximate duality relation \eqref{intro-duality},
 the more general version of the Gnedenko-type conditioned local limit theorem 
given by Theorem \ref{t-B 002}, 
and the conditioned central limit theorem (cf. Theorem \ref{Thm-Lim-tau}) for the reversed Markov walk $(S_n^{x, *})_{n \geq 0}$.

\section{Preliminary results}

\subsection{Martingale approximation and inequalities}

We follow the strategy developed in \cite{HH08, BQ16a} 
to give a martingale approximation for $S_n^x = \log |g_n \cdots g_1 x |$,
under the second moment assumption. 
Let $\mathscr{F}_0$ be the trivial $\sigma$-algebra 
and $\mathscr{F}_n = \sigma \{ g_k: 1 \leq k \leq n \}$.

\begin{lemma}\label{Prop-MartApp}
Assume \ref{Condi-AP}, \ref{Condi-Lya} and that $\int_{\mathcal M_+} (\log N(g) )^{2} \mu(dg)  < \infty$. 
Then, there exist a martingale $(M_n^x, \mathscr{F}_n)_{n \geq 0}$ and a constant $a>0$ such that
\begin{align*}
\sup_{n\geq 0} \sup_{x \in \bb S_+^{d-1} } | S_n^x - M_n^x| \leq  a. 
\end{align*}
\end{lemma}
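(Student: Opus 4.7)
The plan is to follow the classical Gordin-type martingale approximation for the cocycle $\sigma(g,x) = \log|gx|$ over the Markov chain $(X_n^x)_{n\geq 0}$ on $\bb S_+^{d-1}$, along the lines of \cite{HH08, BQ16a}. The key analytic input is the spectral gap of the Markov operator $P\varphi(x) := \int_{\mathcal M_+} \varphi(g \cdot x) \mu(dg)$ on the Banach space $\scr B$: under \ref{Condi-AP} together with the second moment assumption, $P$ is quasi-compact on $\scr B$, with $1$ a simple eigenvalue (its eigenspace consisting of constants, and $\nu$ being the unique invariant probability) and spectral radius $\kappa < 1$ on $\{\varphi \in \scr B: \int \varphi \, d\nu = 0\}$.

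First I would set $\rho(x) := \int_{\mathcal M_+} \log|gx| \mu(dg)$ and verify $\rho \in \scr B$. The uniform sup bound $\|\rho\|_\infty \leq \int \log N(g) \mu(dg) < \infty$ follows from $|\log|gx|| \leq \log N(g)$, while the $\mathbf{d}$-regularity comes from the standard Hilbert-projective estimate $|\log|gx| - \log|gx'|| \leq C\, \mathbf{d}(x,x')$ with $C$ independent of $g$. By the Furstenberg formula for $\lambda$ combined with \ref{Condi-Lya}, $\int \rho \, d\nu = \lambda = 0$. The spectral gap then produces a solution of the Poisson equation $(I - P)\psi = \rho$ via the Neumann series $\psi := \sum_{k \geq 0} P^k \rho \in \scr B$, so in particular $\|\psi\|_\infty < \infty$.

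Next I would decompose $S_n^x = \sum_{k=1}^n \log|g_k X_{k-1}^x|$. Set $d_k := \log|g_k X_{k-1}^x| - \rho(X_{k-1}^x)$ and $N_n^x := \sum_{k=1}^n \bigl[\psi(X_k^x) - \bb E[\psi(X_k^x) \mid \scr F_{k-1}]\bigr]$; both $\sum_{k=1}^n d_k$ and $N_n^x$ are $(\scr F_n)$-martingales since $\bb E[d_k \mid \scr F_{k-1}] = 0$ and $\bb E[\psi(X_k^x) \mid \scr F_{k-1}] = P\psi(X_{k-1}^x)$. A direct telescoping using $\rho = \psi - P\psi$ gives
\begin{align*}
\sum_{k=1}^n \rho(X_{k-1}^x) = \psi(x) - \psi(X_n^x) + N_n^x,
\end{align*}
whence $S_n^x = \sum_{k=1}^n d_k + N_n^x + \psi(x) - \psi(X_n^x)$. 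Setting $M_n^x := \sum_{k=1}^n d_k + N_n^x$, an $(\scr F_n)$-martingale, we obtain $|S_n^x - M_n^x| = |\psi(x) - \psi(X_n^x)| \leq 2\|\psi\|_\infty$, so the lemma holds with $a := 2\|\psi\|_\infty$.

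The main obstacle is really upstream: establishing $\rho \in \scr B$ and the quasi-compactness of $P$ on $\scr B$ under the positivity condition \ref{Condi-AP} alone, rather than the much stronger Furstenberg--Kesten condition \ref{Condi-FK}. Both rely on the strict contraction of the Hilbert cross-ratio metric $\mathbf{d}$ by matrices in the ideal $\mathcal M_+^\circ$, which forces a power of $P$ to be a strict contraction on the zero-mean subspace of $\scr B$; this is the content of the Hennion--Herv\'e theory for products of positive matrices, and once invoked, the remainder of the argument is purely algebraic.
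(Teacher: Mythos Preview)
Your proposal is correct and follows essentially the same approach as the paper: solve the Poisson equation $(I-P)\psi=\theta$ for the drift $\theta(x)=\int\log|gx|\,\mu(dg)$ via the spectral gap of $P$ on $\scr B$, then observe that $S_n^x - M_n^x = \psi(x)-\psi(X_n^x)$ with $M_n^x$ the resulting centered cocycle martingale. Your decomposition $M_n^x=\sum d_k + N_n^x$ is just the paper's martingale $M_n^x=\rho_0(g_n\cdots g_1,x)$ written out increment by increment; the two are identical. One small imprecision: the pointwise bound $|\log|gx|-\log|gx'||\leq C\,\mathbf d(x,x')$ with $C$ independent of $g$ is only asserted in \cite{HH08} for $\mathbf d(x,x')\leq 1/2$, so to conclude $\theta\in\scr B$ the paper splits into the cases $\mathbf d(x,x')\leq 1/2$ and $\mathbf d(x,x')>1/2$, handling the latter trivially via $|\log|gx||\leq\log N(g)$.
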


\begin{proof}
Define the transfer operator $P$ as follows: for any measurable function $\varphi$ on $\bb S_+^{d-1}$,
\begin{align}\label{def-operator-P-01}
P \varphi(x) = \int_{\mathcal M_+}  \varphi(g \cdot x) \mu(dg),  \quad  x \in \bb S_+^{d-1}. 
\end{align}
By \cite{HH08}, under \ref{Condi-AP}, there exist constants $c, c' >0$ such that for any $\varphi \in \scr B$ and $n \geq 1$, 
\begin{align}\label{ergodic-operator-P}
\| P^n \varphi - \nu(\varphi) \|_{ \scr B } \leq  c' e^{-c n} \| \varphi \|_{\scr B},
\end{align}
where $\nu$ satisfies $\mu * \nu = \nu$. 
For $g \in \mathcal M_+$ and $x \in \bb S_+^{d-1}$, let  
\begin{align*}
\rho(g, x) = \log |gx|, 
\end{align*}
which satisfies the cocycle property, i.e. $\rho(g_2 g_1, x) = \rho(g_2, g_1 \cdot x) + \rho(g_1, x)$
for any $g_1, g_2 \in \mathcal M_+$ and $x \in \bb S_+^{d-1}$. 
Consider the expected increase of the cocycle $\rho$: 
\begin{align}\label{Def-Aver-Cocy}
\theta(x) =  \int_{\mathcal M_+} \rho(g, x) \mu(dg),  \quad  x \in \bb S_+^{d-1}.   
\end{align}
The function $\theta$ belongs to $\mathscr{B}$.
Indeed, 
$\| \theta \|_{\infty} \leq \int_{\mathcal M_+} \log N(g) \mu(dg) < \infty$
and 
\begin{align*}
\frac{|\theta(x) - \theta(x')|}{ \bf{d}(x, x') }
& \leq   \frac{\int_{\mathcal M_+}|\rho(g,x) - \rho(g,x')|\mu(dg)}{ \mathbf{d}(x,x')} \mathds{1}_{\{ \mathbf{d}(x,x') > \frac{1}{2}\}} \nonumber\\
& \quad   +  
\frac{\int_{\mathcal M_+}|\rho(g,x)-\rho(g,x')|\mu(dg)}{ \mathbf{d}(x,x')} \mathds{1}_{\{ \mathbf{d}(x,x') \leq \frac{1}{2}\}}.
\end{align*}
The first term is bounded by $4 \int_{\mathcal M_+}\log N(g)\mu(dg)$. 
By \cite[Lemma 3.1]{HH08}, there exists $c>0$ such that $| \rho(g,x) - \rho(g,x') | \leq c \mathbf{d} (x,x')$
for any $g \in \mathcal M_+$ and $x, x' \in \mathbb{S}_{+}^{d-1}$ satisfying $\bf{d} (x,x') \leq 1/2$. 
This shows that the second term is also bounded and hence $\theta \in \mathscr{B}$.

We next prove that there exists a continuous function $\psi$ on $\mathbb{S}_+^{d-1}$ satisfying 
\begin{align}\label{cohomological equ1}
\theta = \psi - P \psi. 
\end{align}
By \cite{HH08}, the Lyapunov exponent $\lambda$ can be rewritten as 
$\lambda = \int_{ \bb S_+^{d-1} } \int_{\mathcal M_+} \log |gx| \mu(dg) \nu(dx).$ 
By condition \ref{Condi-Lya}, we have  
$\nu(\theta) = \lambda = 0$, so that $\theta \in H := {\rm ker}(\nu)\cap \mathscr{B}$.
By \cite{HH08}, $H$ is a closed $P$-invariant subspace of $\mathscr{B}$
and the spectral radius of the restricted operator $P\vert_{H}$ is strictly smaller than $1$.
Therefore, the equation \eqref{cohomological equ1} has a unique solution $\psi =\sum_{n=0}^{\infty} P^n \theta$. 
By $\mu * \nu = \nu$, we have $\nu(P^n)=\nu$ for any $n\geq 0$.
Hence $P^n \theta \in H$ for any $n\geq 0$, so that $\psi \in H$.

Set $\rho_0(g,x) = \rho(g,x)- \psi(x) + \psi(g \cdot x)$ for $g \in \mathcal M_+$ and $x\in \mathbb{S}_{+}^{d-1}$, 
where $\psi$ solves \eqref{cohomological equ1}. 
Since $\rho$ is a cocycle, so is $\rho_0$. 
By \eqref{Def-Aver-Cocy} and \eqref{cohomological equ1}, 
we have $\int_{\mathcal M_+} \rho_0(g, x) \mu(dg) = 0$ for any $x \in \mathbb{S}_{+}^{d-1}$. 
Define $M_0^x = 0$ and $M_n^x = \rho_0(g_n \cdots g_1, x)$ for $n \geq 1$,  
then $(M_n^x, \mathscr{F}_n)_{n \geq 0}$ is a martingale and $S_n^x = M_n^x + \psi(x) - \psi(X_n^x)$, 
which concludes the proof of the lemma. 
\end{proof}

The following inequality is a generalisation of Fuk's inequality for martingales; see Haeusler \cite[Lemma 1]{Hae84} for the proof.

\begin{lemma}[\cite{Hae84}]  \label{Lem_FukNagaev}
Let $\xi_1, \ldots, \xi_n$ be a martingale difference sequence with respect to the non-decreasing 
$\sigma$-fields $\mathscr F_0, \mathscr F_1, \ldots, \mathscr F_n$.
Then, for all $u, v, w > 0$,
\begin{align*}
\bb P \left(  \max_{1 \leq k \leq n} \left| \sum_{i=1}^k \xi_i \right| \geq u \right)
& \leq  \sum_{i=1}^n \bb P \left( |\xi_i|  > v \right)
  + 2 \bb P \left(  \sum_{i=1}^n \bb E \left( \xi_i^2 |  \mathscr F_{i-1} \right) > w \right)   \notag\\
& \quad  + 2 \exp \left\{ \frac{u}{v} \left( 1 - \log \frac{uv}{w} \right) \right\}. 
\end{align*}
\end{lemma}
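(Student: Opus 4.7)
The plan is to follow the classical truncation plus exponential supermartingale strategy going back to Bernstein and Bennett, adapted to the martingale setting. The three right-hand side terms correspond to three complementary events that I would handle separately. First I truncate the jumps: set $\xi_i' = \xi_i \mathds{1}_{\{|\xi_i| \leq v\}}$ and $\tilde \xi_i = \xi_i' - \bb E(\xi_i' \mid \mathscr F_{i-1})$, which remains a martingale difference sequence with $|\tilde \xi_i| \leq 2v$. On the event $A = \bigcup_{i=1}^n \{|\xi_i| > v\}$ one has $\bb P(A) \leq \sum_{i=1}^n \bb P(|\xi_i| > v)$, which produces the first term. On $A^c$, $\xi_i = \xi_i'$, so $\sum_{i=1}^k \xi_i = \sum_{i=1}^k \tilde \xi_i + R_k$, with the predictable correction $R_k = - \sum_{i=1}^k \bb E(\xi_i \mathds{1}_{\{|\xi_i| > v\}} \mid \mathscr F_{i-1})$ bounded uniformly by $v^{-1} \sum_{i=1}^n \bb E(\xi_i^2 \mid \mathscr F_{i-1})$, via the elementary estimate $|\xi_i| \mathds{1}_{\{|\xi_i| > v\}} \leq \xi_i^2 / v$.

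Next I split off the event $\{ \sum_{i=1}^n \bb E(\xi_i^2 \mid \mathscr F_{i-1}) > w\}$, whose probability gives the middle term (the factor $2$ will appear once both tails are handled). Introduce the stopping time $\tau = \inf\{k : \sum_{i=1}^k \bb E(\xi_i^2 \mid \mathscr F_{i-1}) > w\}$. On $\{\tau > n\}$ the stopped process $T_k = \sum_{i=1}^{k \wedge \tau} \tilde \xi_i$ has increments bounded by $2v$ and quadratic characteristic at most $w$, while $|R_k| \leq w/v$, which is negligible against $u$ in the only informative regime $uv > w$. The classical Bennett-type inequality applied to the bounded-increment differences gives, for every $\lambda > 0$,
\begin{align*}
\bb E \left[ \exp( \lambda \tilde \xi_i ) \,\middle|\, \mathscr F_{i-1} \right] \leq \exp \left( \frac{e^{\lambda v} - 1 - \lambda v}{v^2} \bb E(\tilde \xi_i^2 \mid \mathscr F_{i-1}) \right),
\end{align*}
so that $\exp \left( \lambda T_k - v^{-2} (e^{\lambda v} - 1 - \lambda v) \sum_{i=1}^{k \wedge \tau} \bb E(\tilde \xi_i^2 \mid \mathscr F_{i-1}) \right)$ is a nonnegative supermartingale. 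Combining Doob's maximal inequality with the a priori bound $w$ on the variance yields
\begin{align*}
\bb P \left( \max_{1 \leq k \leq n} T_k \geq u,\, \tau > n \right) \leq \exp \left( - \lambda u + \frac{w ( e^{\lambda v} - 1 - \lambda v )}{v^2} \right).
\end{align*}

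To recover the exact expression in the statement I would use the crude relaxation $e^{\lambda v} - 1 - \lambda v \leq e^{\lambda v}$ and choose $\lambda v = \log(uv/w)$; the exponent then collapses to exactly $\frac{u}{v}(1 - \log(uv/w))$. Applying the same argument to $-\tilde \xi_i$ covers the negative deviation, producing the factor $2$ in front of the exponential and, simultaneously, doubling the variance-overshoot contribution. The main delicate point is precisely this optimization: the Bennett exponent has no closed-form minimizer, so relaxing $e^{\lambda v} - 1 - \lambda v$ to $e^{\lambda v}$ before optimizing is the standard trick that produces a bound matching the statement. In the regime $uv \leq we$ the right-hand side exceeds $1$ and the inequality is trivially true, so no extra care is needed there; the correction $R_k$ and the discrepancy between $\sum \xi_i$ and $\sum \tilde \xi_i$ can be absorbed by running the argument with $u/2$ in place of $u$, at the cost of universal constants which are harmless in applications and can be further tightened by a more careful bookkeeping.
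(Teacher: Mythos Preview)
The paper does not prove this lemma at all; it simply states the inequality and refers to Haeusler \cite[Lemma 1]{Hae84} for the proof. Your sketch follows precisely the classical strategy of that reference: truncation at level $v$, splitting off the event where the conditional quadratic variation exceeds $w$, and a Bennett-type exponential supermartingale for the bounded-increment remainder, optimized via $\lambda v=\log(uv/w)$ after the relaxation $e^{\lambda v}-1-\lambda v\leq e^{\lambda v}$.

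One caveat: your final paragraph concedes that the predictable drift $R_k$ and the passage from $\sum\xi_i$ to $\sum\tilde\xi_i$ may force you to run the argument with $u/2$ and then absorb the discrepancy into ``universal constants.'' That would not yield the statement as written, with its precise constant $2$ and the exact exponent $\frac{u}{v}(1-\log\frac{uv}{w})$. Haeusler's actual argument avoids recentering the truncated increments altogether: one works directly with the one-sided truncations $\xi_i\mathds 1_{\{\xi_i\leq v\}}$ (and separately $-\xi_i\mathds 1_{\{-\xi_i\leq v\}}$), which need not be martingale differences but are \emph{supermartingale} differences, so the exponential bound still applies and no predictable correction $R_k$ appears. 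This is what makes the constants come out exactly as stated; your recentering detour is the source of the looseness you flag at the end.
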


Using this lemma and the spectral gap property \eqref{ergodic-operator-P}, we establish the following Fuk-type inequality for a target function $\varphi$ on the Markov chain $(X_n^x)_{n \geq 0}$. This inequality will play a key role in proving the lower bound \eqref{eqt-A 002}.
Here, $\gamma$ is taken to be sufficiently large, and it is crucial to carefully track the dependence on the target function $\varphi$.

\begin{lemma} \label{FK-joint-inequality}
Assume \ref{Condi-AP}, \ref{CondiMoment} and \ref{Condi-Lya}.  
Then there exist constants $c, c', c_0>0$ such that for any  $\gamma > c_0$, $n\geq 1$, $x \in \bb S_+^{d-1}$ 
and any nonnegative function $\varphi \in \scr B$,
\begin{align*} 
I :  =    
 \bb E  \bigg[   \varphi \left( X_n^x \right)    
  \mathds 1_{ \left\{ \max_{1 \leq j \leq n } | S_{j}^x |  \geq  \gamma \sqrt{n}   \right\}  }  \bigg]     
    \leq   c' \left( \frac{1}{ \gamma } + \frac{1}{ n^{\delta/2} }  \right)  \nu(\varphi)   
      +  \frac{c'}{ \sqrt{n} } \|\varphi\|_{\infty} + c' e^{-cn^{1/12}} \| \varphi \|_{\mathscr B}. 
\end{align*}
\end{lemma}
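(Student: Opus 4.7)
The plan is to combine the martingale approximation of Lemma~\ref{Prop-MartApp} with the Fuk-Nagaev inequality (Lemma~\ref{Lem_FukNagaev}) for the deviation probability, then use the Markov property at the first exit time together with the spectral gap \eqref{ergodic-operator-P} of $P$ to factor out the target $\varphi(X_n^x)$.

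\emph{Step~1 (Deviation bound).} By Lemma~\ref{Prop-MartApp}, $|S_j^x-M_j^x|\le a$ uniformly in $j$ and $x$, so for $\gamma>2a$ the event of interest sits inside $\{\max_{1\le j\le n}|M_j^x|\ge\gamma\sqrt{n}/2\}$. The martingale differences $\xi_i=M_i^x-M_{i-1}^x$ are dominated by $\log N(g_i)$ up to an additive constant and thus satisfy $\bb P(|\xi_i|>v)\le c v^{-(2+\delta)}$ under \ref{CondiMoment}, while the conditional second moments are uniformly bounded. Applying Lemma~\ref{Lem_FukNagaev} with $u=\gamma\sqrt{n}/2$, $v=\sqrt{n}$ and $w$ comparable to $n$, the three terms contribute respectively $O(n^{-\delta/2})$, zero, and $\exp\{(\gamma/2)(1-\log(c\gamma))\}\le c/\gamma$ for $\gamma>c_0$ large enough, yielding $\bb P(\max_{1\le j\le n}|S_j^x|\ge\gamma\sqrt{n})\le c(1/\gamma+n^{-\delta/2})$.

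\emph{Step~2 (Markov decomposition via first exit).} Let $\tau=\inf\{j\ge 1:|S_j^x|\ge\gamma\sqrt{n}\}$ and fix a splitting index $k^*=n-\lfloor n^{1/6}\rfloor$. By the Markov property,
\[
I=\sum_{k=1}^n\bb E\bigl[\mathds 1_{\{\tau=k\}}P^{n-k}\varphi(X_k^x)\bigr].
\]
For $k\le k^*$, \eqref{ergodic-operator-P} gives $|P^{n-k}\varphi(\cdot)-\nu(\varphi)|\le c' e^{-cn^{1/6}}\|\varphi\|_{\mathscr B}\le c' e^{-cn^{1/12}}\|\varphi\|_{\mathscr B}$, so Step~1 bounds this partial sum by $c\nu(\varphi)(1/\gamma+n^{-\delta/2})+c' e^{-cn^{1/12}}\|\varphi\|_{\mathscr B}$, which produces the first and third terms of the target estimate. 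The remaining sum over $k\in(k^*,n]$ is controlled crudely by $\|\varphi\|_\infty\,\bb P(\tau\in(k^*,n])$, reducing the problem to showing $\bb P(\tau\in(k^*,n])\le c/\sqrt{n}$ uniformly in $\gamma>c_0$.

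\emph{Step~3 (Sharp tail on $\tau$) and main obstacle.} The crucial point is this last estimate. We decompose on the value of $S_{k^*}^x$: when $|S_{k^*}^x|<\gamma\sqrt{n}/2$, the exit in the last $n^{1/6}$ steps forces $\max_{k^*<j\le n}|S_j^x-S_{k^*}^x|\ge\gamma\sqrt{n}/2-2a$, and Doob's maximal inequality applied to the martingale on the short segment gives probability $O(n^{1/6}/(\gamma^2 n))=o(1/\sqrt{n})$; when $|S_{k^*}^x|\in[\gamma\sqrt{n}/2,\gamma\sqrt{n})$ (the only alternative compatible with $\tau>k^*$), the ordinary local limit theorem for $(S_n^x)$ under \ref{CondiNonarith} (cf.\ \cite[Chapter~X]{HH01}) provides a density bound of order $1/\sqrt{n}$ on this interval, and combined with the Gaussian tail factor $e^{-\gamma^2/(8\sigma^2)}$ on the large-deviation regime this yields $O(1/\sqrt{n})$ uniformly in $\gamma>c_0$. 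The main obstacle is precisely this sharp $1/\sqrt{n}$ decay: a bare Chebyshev argument on the boundary contribution would only give $O(1/\gamma^2)$, leaving an $\|\varphi\|_\infty/\gamma^2$ term instead of the desired $\|\varphi\|_\infty/\sqrt{n}$. The improvement crucially relies on invoking the LLT for the one-dimensional walk $(S_n^x)$ under \ref{CondiNonarith}, together with the balance in the choice of $k^*$ that keeps $n-k^*\ge n^{1/12}$ (for the spectral gap) while being small enough ($n-k^*=n^{1/6}$) for the maximal inequality on the final segment to produce an $o(1/\sqrt{n})$ contribution.
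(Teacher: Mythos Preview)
Your Steps~1 and~2 are reasonable and parallel the paper's strategy (the paper splits at a deterministic time $p=n-\lfloor n^{1/12}\rfloor$ rather than via the random first-exit time $\tau$, but the idea of using the Markov property plus the spectral gap \eqref{ergodic-operator-P} to isolate $\nu(\varphi)$ is the same). The genuine gap is in Step~3.

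First, the lemma assumes only \ref{Condi-AP}, \ref{CondiMoment} and \ref{Condi-Lya}; the non-arithmeticity condition \ref{CondiNonarith} is \emph{not} among the hypotheses, so you are not entitled to invoke the local limit theorem for $(S_n^x)$. Second, and more fundamentally, even if \ref{CondiNonarith} were available, your second case does not produce $O(1/\sqrt{n})$. The event $\{|S_{k^*}^x|\in[\gamma\sqrt{n}/2,\gamma\sqrt{n})\}$ concerns an interval of length $\gamma\sqrt{n}/2$, so the LLT density bound of order $1/\sqrt{n}$ per unit length, multiplied by the Gaussian tail factor, gives at best $c\gamma e^{-\gamma^2/(8\sigma^2)}$---a quantity bounded uniformly in $\gamma$ but \emph{constant in $n$}. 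Thus $\bb P(\tau\in(k^*,n])$ is not $O(1/\sqrt{n})$ by this route, and the resulting $\|\varphi\|_\infty$ contribution cannot be absorbed into the stated bound (note that the target carries $\nu(\varphi)$, not $\|\varphi\|_\infty$, in front of the $1/\gamma$ term).

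The paper avoids both issues by decomposing on the \emph{increment} over the final segment rather than on the position $S_{k^*}^x$: one writes $I=I_1+I_2$ according to whether $\max_{p<j\le n}|S_j^x-S_p^x|\le n^{1/3}$ or not. On $I_1$ the maximum over $[1,n]$ can be replaced by the maximum over $[1,p]$, after which the Markov property at time $p$ and the spectral gap factor out $\nu(\varphi)$ up to an $e^{-cn^{1/12}}\|\varphi\|_{\mathscr B}$ error, and Fuk--Nagaev bounds the remaining deviation probability. For $I_2$ one bounds $\bb P(\max_{p<j\le n}|S_j^x-S_p^x|>n^{1/3})$ by Markov's inequality and the second moment in \ref{CondiMoment}: since only $\lfloor n^{1/12}\rfloor$ steps are involved against a threshold $n^{1/3}$, this probability is polynomially small in $n$, uniformly in $\gamma$ and without any appeal to \ref{CondiNonarith}. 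The key point is that an increment-based splitting makes the ``bad'' event small purely through moment bounds, decoupled from where $S_{k^*}^x$ sits.
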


\begin{proof}
Set $p = n - [n^{1/12}]$.
We write 
\begin{align*}
I & =    
 \bb E  \bigg[   \varphi \left( X_n^x \right)    
  \mathds 1_{ \left\{ \max_{1 \leq j \leq n } | S_{j}^x |  \geq \gamma \sqrt{n}   \right\}  }   
    \mathds 1_{ \left\{  \max_{ p < j \leq n } | S_{j}^x - S_p^x |  \leq  n^{1/3}  \right\} }  \bigg]     \notag\\ 
  & \quad  +   
 \bb E  \bigg[   \varphi \left( X_n^x \right)    
  \mathds 1_{ \left\{ \max_{1 \leq j \leq n } | S_{j}^x |  \geq \gamma \sqrt{n}   \right\}  }   
    \mathds 1_{ \left\{  \max_{ p < j \leq n } | S_{j}^x - S_p^x |  >  n^{1/3}  \right\} }  \bigg]    \notag\\
  & =: I_1 + I_2. 
\end{align*}
For the first term $I_1$, since  $\max_{1 \leq j \leq n } | S_{j}^x  |  
\leq \max_{1 \leq j \leq p } | S_{j}^x | +  \max_{p < j \leq n } | S_{j}^x - S_p^x |$
and $\gamma \sqrt{n} - n^{1/3} \geq \frac{2 \gamma}{3}  \sqrt{n}$ for any $\gamma \geq 3$ and $n \geq 1$, 
we get that for $\gamma \geq 3$ and $n \geq 1$, 
\begin{align}\label{Lower_F_ee_kkk-002}
I_1 \leq   
 \bb E  \bigg[   \varphi \left( X_n^x  \right)    
  \mathds 1_{ \left\{ \max_{1 \leq j \leq p } | S_{j}^x |  \geq  \frac{2 \gamma}{3}  \sqrt{n}   \right\}  } 
   \bigg].
\end{align}
In view of \eqref{def-operator-P-01}, it holds that $P^k \varphi (x) =  \bb E [ \varphi (X_k^x) ]$ 
for any $k\geq 1$ and $x \in \bb S_+^{d-1}$. 
By the Markov property, from \eqref{Lower_F_ee_kkk-002} and \eqref{ergodic-operator-P}
 it follows that there exist constants $c, c' >0$ such that for any $x \in \bb S_+^{d-1}$ and $n \geq 1$, 
\begin{align}\label{Bound-I1-0a}
I_1  
& \leq  \bb E \Big[  P^{[n^{1/12}]} \varphi  \left( X_p^x \right) 
 \mathds 1_{ \left\{ \max_{1 \leq j \leq p } |  S_{j}^x |  \geq  \frac{2 \gamma}{3}  \sqrt{n}   \right\}  }  \Big]  \notag\\
& \leq  
\nu(\varphi)   \bb P \left( \max_{1 \leq j \leq p } | S_{j}^x  |  \geq  \frac{2 \gamma}{3}  \sqrt{n}   \right)   
+ c' e^{-c n^{1/12}} \| \varphi \|_{\mathscr B}  \notag\\
& \leq  \nu(\varphi)   \bb P \left( \max_{1 \leq j \leq n } | M_{j}^x  |  \geq  \frac{\gamma}{2}  \sqrt{n}   \right)   
+ c' e^{-c n^{1/12}} \| \varphi \|_{\mathscr B}, 
\end{align}
where $(M_j^x, \scr F_j)_{j \geq 1}$ is a martingale from Lemma \ref{Prop-MartApp}. 
By utilizing Lemma \ref{Lem_FukNagaev} with the values $u = \frac{\gamma}{2}  \sqrt{ n}$, 
$v = c_0^2 \sqrt{n}$ and $w = \frac{c_0^2}{8} \gamma n$, 
we get that the the first term is bounded by $c' n^{- \delta/2}$ with $\delta >0$ given in condition \ref{CondiMoment}, the second term is bounded by $\frac{c'}{\gamma}$,
and the third term equals $2 \exp ( - \frac{\gamma}{2c_0^2} \log \frac{4}{e} )$, so that 
\begin{align}\label{appli-Fuk-Nagaev-001}
  \bb P \left(  \max_{1 \leq j \leq n } | M_{j}^x |  \geq  \frac{\gamma}{2}  \sqrt{n}   \right) 
\leq  \frac{c'}{ n^{\delta/2} } + \frac{c'}{ \gamma }  + 2 e^{- c \gamma } \leq c'' \left(  \frac{1}{ \gamma } + \frac{1}{ n^{\delta/2} } \right). 
 \end{align} 
Therefore,  
\begin{align}\label{bound-I1-ww}
I_1 \leq  c'' \left( \frac{1}{ \gamma } + \frac{1}{ n^{\delta/2} }  \right) \nu(\varphi)  + c'' e^{-cn^{1/12}} \| \varphi \|_{\mathscr B}. 
\end{align}
For the second term $I_2$, using Markov's inequality and condition \ref{CondiMoment}, we get 
\begin{align*}
I_2  & \leq  \|\varphi\|_{\infty}  \bb P \left( \max_{ p < j \leq n } | S_{j}^x - S_p^x |  >  n^{1/3} \right)
\leq  \|\varphi\|_{\infty}  \sum_{k=1}^{n-p}  \bb P \left( \log N(G_k)  >  n^{1/3} \right)   \notag\\
& \leq  \|\varphi\|_{\infty}  \sum_{k=1}^{n-p}  \frac{ \bb E [ (\log N(G_k) )^2 ] }{ n^{2/3} }
\leq  \|\varphi\|_{\infty}  \sum_{k=1}^{n-p}  k^2  \frac{ \bb E [ (\log N(g) )^2 ] }{ n^{2/3} }
\leq  \frac{c}{ \sqrt{n} } \|\varphi\|_{\infty}. 
\end{align*}
 This, together with \eqref{bound-I1-ww}, concludes the proof of the lemma.
\end{proof}


\subsection{A conditioned central limit theorem}

Denote by $\Phi^+(t)= (1 - e^{-\frac{t^2}{2}}) \mathds 1_{\{ t \geq 0 \}}$, $t \in \bb R$, the Rayleigh distribution function.
Now we state the following conditioned central limit theorem. 

\begin{theorem}\label{Thm-Lim-tau}
Assume \ref{Condi-AP}, \ref{CondiMoment}  and \ref{Condi-Lya}.    
Then there exists a constant $\ee_0 >0$ such that the following assertions hold. 
\begin{enumerate}[label=\arabic*., leftmargin=*]
\item
For any $\ee \in (0,\ee_0)$, there exists a constant $c_{\ee} >0$ such that 
for any $n \geq 1$, $x \in \bb S_+^{d-1}$, $y \leq n^{1/2-\ee}$ and $t \in \bb R$, 
\begin{align}\label{Condi_Ray02}
\left| \bb{P} \left(\frac{y + S_n^x}{\sigma\sqrt{n}} \leq t,  \,  \tau_{x, y} > n \right)
 - \Phi^{+}(t) \frac{2V(x,y)}{ \sigma \sqrt{2\pi n} }  \right| 
   \leq  c_{\ee}  \frac{  1 + \max\{y, 0\} }{n^{1/2+\ee}}.  
\end{align} 
\item
For any $\ee \in (0,\ee_0)$, there exists a constant $c_{\ee} >0$ such that  for any 
$n\geq 1$, $x \in \bb S_+^{d-1}$, $y \geq n^{\frac{1}{2} -  \ee}$ and $t \geq 0$,    
\begin{align} \label{CorCCLT02}
\left| \bb P \left(  \frac{ y + S_n^x }{ \sigma \sqrt{n}} \leq  t, \tau_{x, y} >n\right) 
     -   \int_{0}^{t}  \psi \left( \frac{y}{\sigma \sqrt{n}},  z  \right)  dz    \right|
   \leq   c_{\ee}  n^{- \ee }.  
\end{align}
\end{enumerate}
\end{theorem}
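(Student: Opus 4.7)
The plan is to prove the two regimes separately using a common two-scale decomposition, the martingale approximation of Lemma \ref{Prop-MartApp}, and the asymptotic properties of the harmonic function $V$ from \cite{GLP17, Pham18}. The key tool is the Markov property at an intermediate time $m = \lfloor n^{1-2\ee} \rfloor$, which separates the trajectory into a short piece where the conditioning is delicate and a long piece where standard (unconditioned) CLT estimates apply.

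For part 1 (small $y \leq n^{1/2-\ee}$): I would decompose
\begin{align*}
\bb{P}\!\left(\frac{y+S_n^x}{\sigma\sqrt{n}} \leq t,\, \tau_{x,y} > n\right)
= \bb E\bigl[ G_{n-m}(X_m^x, y+S_m^x, t);\, \tau_{x,y} > m \bigr],
\end{align*}
where $G_k(x',y',t) = \bb P(y' + S_k^{x'} \leq \sigma \sqrt{n}\, t,\, \tau_{x',y'} > k)$. On the event $\tau_{x,y}>m$ the typical size of $y+S_m^x$ is of order $\sqrt{m}$, which is much smaller than $\sigma\sqrt{n-m}$. For such starting heights I would use an unconditional Berry--Esseen-type CLT for the martingale $M_{n-m}^{\cdot} - M_0^{\cdot}$ (available via Lemma \ref{Prop-MartApp} and the $2+\delta$ moment condition) together with the reflection principle to obtain
\begin{align*}
G_{n-m}(x',y',t) = \frac{2 y'}{\sigma\sqrt{2\pi(n-m)}}\,\Phi^+(t) + o\!\left( \frac{y'+1}{\sqrt{n}}\right),
\end{align*}
uniformly in $x'$ and in $y' = O(\sqrt{m})$. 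Inserting this into the decomposition and invoking the integral limit theorem $\bb E[(y+S_m^x);\tau_{x,y}>m] \to V(x,y)$ from \cite{GLP17, Pham18}, with its quantitative rate, would yield the claimed asymptotic with the stated error $(1+\max\{y,0\})n^{-1/2-\ee}$.

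For part 2 (large $y \geq n^{1/2-\ee}$): Here the starting point is already macroscopic on the $\sqrt{n}$-scale, so the Markov walk behaves like a Brownian motion started at $y/(\sigma\sqrt{n})$, and the conditioning $\{\tau_{x,y}>n\}$ is captured by the classical reflection principle yielding the density $\psi(y/(\sigma\sqrt n), \cdot)$. I would again decompose at an intermediate time $m$, apply the martingale approximation and a Fuk--Nagaev-type inequality (Lemma \ref{FK-joint-inequality}) to show that the maximal deviation of $S_k^x - S_0^x$ over $k \leq m$ is much smaller than $y$, so the event $\{\tau_{x,y} > m\}$ has probability $1 - O(n^{-\ee})$. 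Then a uniform joint CLT with reflection for $(y+S_n^x)/(\sigma\sqrt n)$ conditioned to stay positive — derivable by approximating the martingale increments by a Brownian motion via KMT or direct Lindeberg-exchange — produces the integral $\int_0^t \psi(y/(\sigma\sqrt n), z)\,dz$ with rate $n^{-\ee}$.

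The main obstacle is obtaining the uniformity in $y$ with the specified rates, particularly the linear dependence on $1 + \max\{y,0\}$ in part 1. This requires a careful quantitative version of the conditioned integral limit theorem from \cite{GLP17, Pham18}, in which the remainder in $\bb E[(y+S_m^x)f(X_m^x);\tau_{x,y}>m] - V(x,y)\nu(f)$ is controlled uniformly by $(1+y)m^{-\ee'}$. The essential inputs are the spectral gap \eqref{ergodic-operator-P} for the transfer operator $P$, the exponential decay of the remainder in the martingale decomposition, and the bounded-error cocycle representation from Lemma \ref{Prop-MartApp}, all of which are available under \ref{Condi-AP}, \ref{CondiMoment}, and \ref{Condi-Lya}.
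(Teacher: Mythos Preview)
The paper omits the proof of this theorem entirely, referring to \cite{GX21} for the analogous i.i.d.\ argument, and your outline (two-scale decomposition via the Markov property at $m=\lfloor n^{1-2\ee}\rfloor$, martingale approximation from Lemma~\ref{Prop-MartApp}, convergence of $\bb E[(y+S_m^x);\tau_{x,y}>m]$ to $V(x,y)$ from \cite{GLP17,Pham18}, and an invariance-principle/reflection argument for the long block) is precisely that standard route adapted to the matrix setting. One small caveat: under only the $(2+\delta)$-moment hypothesis \ref{CondiMoment} a KMT strong approximation is not available, so for part~2 you should rely on the Lindeberg-type functional CLT with rate (combined with the Fuk--Nagaev bound you cite) rather than KMT; also, the ``reflection principle'' for $G_{n-m}$ in part~1 is not literal reflection of the Markov walk but must go through the Brownian limit via the invariance principle, exactly as in \cite{GLP17,GX21}.
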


The proof of Theorem \ref{Thm-Lim-tau}  can be done in a similar way as those in \cite{GX21}
and we omit the details. 

By combining  the asymptotics \eqref{Condi_Ray02} and \eqref{CorCCLT02}, 
one can deduce the following unified version of the conditioned central limit theorem.

\begin{theorem}\label{Theor-probIntUN-002} 
Assume \ref{Condi-AP}, \ref{CondiMoment} and \ref{Condi-Lya}. Then, there exists a constant $\eta_0 >0$ such that for any $\eta \in(0, \eta_0)$, $n \geq 1$, $t\in \bb R_+$, $x\in \bb S_+^{d-1}$ and $y \in \bb R$,
\begin{align*} 
\left| \mathbb{P} \left(  \frac{y+S^x_n }{ \sigma \sqrt{n}} \leq  t, \tau_{x,y} >n\right) 
     -\frac{ V_n(x, y) }{\sigma\sqrt{n}}   \int_{0}^{t}  \ell \left(\frac{y}{\sigma \sqrt{n}}, u   \right)  du    \right|
\leq c_{\eta} \frac{1 + V_n(x, y) }{n^{1/2+\eta}}.
\end{align*}
\end{theorem}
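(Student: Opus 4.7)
I split the range of $y$ at the threshold $y_* = n^{1/2-\eta}$, with $\eta_0 = \min(\ee_0/2, 1/4)$, and apply the two parts of Theorem~\ref{Thm-Lim-tau} separately in the two regimes. The algebraic identity that makes both comparisons transparent is
\begin{align*}
\frac{V_n(x,y)}{\sigma\sqrt n} \int_0^t \ell\left(\frac{y}{\sigma\sqrt n}, u\right) du = \frac{V(x,y)}{y} \int_0^t \psi\left(\frac{y}{\sigma\sqrt n}, u\right) du,
\end{align*}
which follows from $V_n(x,y) = V(x,y) H(y')/y'$ and $\ell(y',\cdot) = \psi(y',\cdot)/H(y')$ (writing $y' = y/(\sigma\sqrt n)$).

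\textbf{Regime $y \leq y_*$.} Apply \eqref{Condi_Ray02} with $\ee = \eta$, which produces the main term $\Phi^+(t) \cdot 2V(x,y)/(\sigma\sqrt{2\pi n})$ and remainder $c_\eta (1+\max\{y,0\})/n^{1/2+\eta}$. The discrepancy with the unified main term equals $\frac{V(x,y)}{\sigma\sqrt n}\bigl[L(y')\int_0^t\ell(y',u)du - \frac{2}{\sqrt{2\pi}}\Phi^+(t)\bigr]$. Since $|y'|\leq n^{-\eta}/\sigma$, Taylor expansions of $\psi$, $H$ and $L$ around $0$ (noting $L(0)=2/\sqrt{2\pi}$ and $\ell(0,\cdot)=\phi^+$) bound the bracket by $O((y')^2)=O(n^{-2\eta})$, so the discrepancy is $O(V_n(x,y)/n^{1/2+2\eta})$. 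To absorb the first remainder into the target form I use $1+\max\{y,0\}\leq c\,(1+V_n(x,y))$ valid for $y\leq y_*$, which follows from $V(x,y)/y\to 1$ as $y\to\infty$ combined with two-sided boundedness of $L(y')$ for bounded $y'$.

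\textbf{Regime $y > y_*$.} Apply \eqref{CorCCLT02} with $\ee = 2\eta$, admissible since $y > y_* \geq n^{1/2-2\eta}$ and $2\eta < \ee_0$. This gives the main term $\int_0^t\psi(y',z)\,dz$ with remainder $c_\eta n^{-2\eta}$. By the identity above, the discrepancy with the unified main term equals $(1 - V(x,y)/y)\int_0^t \psi(y',u)du$. The sharp asymptotic $|V(x,y)/y - 1|\leq C/y$ (a standard consequence of the harmonic function theory in \cite{GLP17, Pham18}) together with $\int_0^t\psi(y',u)du\leq H(y')\leq c\min(1,y')$ bounds this discrepancy by $O(1/\sqrt n)$. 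Throughout this regime $V_n(x,y)\geq c\,n^{1/2-\eta}$ (because $V(x,y)\geq c y$ for $y$ large, with $L(y')$ bounded below on bounded sets and $L(y')\sim 1/y'$ as $y'\to\infty$), so the target bound $(1+V_n(x,y))/n^{1/2+\eta}$ is at least $c\,n^{-2\eta}$, which dominates both $n^{-2\eta}$ and $n^{-1/2}$ (the latter requiring $\eta\leq 1/4$).

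\textbf{Main obstacle.} The delicate step is the matching at the boundary $y\approx y_*$ in the second regime, where the probability, its approximation and the CLT remainder are all of the same order $n^{-2\eta}$. This forces the choice $\ee=2\eta>\eta$ in \eqref{CorCCLT02} and relies crucially on extracting the extra factor $H(y')\leq c y'$ from the tail of $\psi$ to compensate the rate $|V(x,y)/y-1|=O(1/y)$. This fine interplay between the harmonic function asymptotics and the behavior of the Rayleigh-like density near the boundary is the core technical point; everything else is careful bookkeeping of Taylor remainders and elementary comparisons.
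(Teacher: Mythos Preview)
Your approach matches the paper's brief indication (``by combining the asymptotics \eqref{Condi_Ray02} and \eqref{CorCCLT02}''): the split at $y_* = n^{1/2-\eta}$, the algebraic identity linking the two main terms, and the Taylor/asymptotic comparisons are the right ingredients, and your Regime~2 analysis is correct.

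One oversight in Regime~1: you assert $|y'| \leq n^{-\eta}/\sigma$, but the regime is $y \leq y_*$, not $|y| \leq y_*$. For $y < -n^{1/2-\eta}$ the Taylor bound on the bracket fails --- in fact the bracket is $O(1)$ when $|y'|$ is large, since $L(y')\int_0^t\ell(y',u)\,du \to 0$ while $\frac{2}{\sqrt{2\pi}}\Phi^+(t)$ stays fixed. The patch is short: from the harmonic identity $V(x,y) = \bb E[V(X_1^x, y+S_1^x);\, y+S_1^x\geq 0]$, the linear bound $V(\cdot,u)\leq c(1+u)$, and condition \ref{CondiMoment}, one obtains $V(x,y)\leq C|y|^{-(1+\delta)}$ for $y\leq -1$. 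Both main terms in the discrepancy are then bounded by $C\,V(x,y)/\sqrt n \leq C\, n^{-1/2-(1/2-\eta)(1+\delta)}$, which is $\leq c_\eta n^{-1/2-\eta}$ once $\eta \leq (1+\delta)/(2(2+\delta))$; simply add this constraint to the definition of $\eta_0$.
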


In paricular, taking $t = \infty$, we get that there exists a constant $c>0$ 
such that for any $n \geq 1$, $x \in \bb S_+^{d-1}$ and $y \in \bb R$, 
\begin{align}\label{bound-tau-y-bis}
\bb P (\tau_{x, y} > n) \leq c \frac{ 1 + V_n(x, y) }{ \sqrt{n} }. 
\end{align}

\subsection{A local limit theorem}

Let $\mathscr H$ be the set of real-valued functions $F$ on $\bb S_+^{d-1} \times \bb R$ 
such that the integral $\int_{\bb R} \| F (\cdot, y) \|_{\mathscr B} dy$ is finite; 
for any $y \in \bb R$, the function $x \mapsto F(x, y)$ is Lipschitz continuous on $\bb S_+^{d-1}$ with respect to the distance $\bf d$;
and for any $x \in \bb S_+^{d-1}$, the function $y \mapsto F(x, y)$ is measurable on $\bb R$. 
 By \cite[Lemma 5.3]{GQX21}, the function $F$ is measurable on $\bb S_+^{d-1} \times \bb R$ and 
the function $y \mapsto \| F (\cdot, y) \|_{ \mathscr B }$ is measurable on $\bb R$. 
For any $F \in \mathscr H$,  we use the notation 
\begin{align} \label{def-norms-F}
\| F \|_{\mathscr H} =  \int_{\bb R} \| F(\cdot, y) \|_{\mathscr B}   dy,
\qquad
\| F \|_{\nu \otimes \Leb} =  \int_{ \bb S_+^{d-1} \times \bb R} | F(x, y) |  \nu(dx) dy. 
\end{align}
Clearly, it holds that $\| F \|_{\nu \otimes \Leb} \leq \| F \|_{\mathscr H}$ for any $F \in \mathscr H$. 

For any nonnegative Borel measurable functions $F, H: \bb S_+^{d-1} \times \mathbb R \mapsto \mathbb R_+$ and $\ee>0$, 
we say that $F$ is $\ee$-dominated by $H$ (say $F \leq_{\ee} H$),  if
\begin{align} \label{def_upper_envelope_001}
 F(x, y)  \leq  H(x, y+v), \  \forall x \in \bb S_+^{d-1},  \  \forall y \in \bb R,  \ \forall |v| \leq \ee.   
\end{align}
Let $\phi(y) = \frac{1}{\sqrt{2 \pi}} e^{- \frac{y^2}{2}}$, $y \in \bb R$, be the standard normal density function. 
Now we state the effective version of the local limit theorem for products of positive random matrices.

 \begin{theorem} \label{LLT-general}
Assume \ref{Condi-AP}, \ref{CondiMoment} and \ref{Condi-Lya}.  
Let $\delta \in (0, 1]$ be from \ref{CondiMoment}. 
Then there exists a constant $c>0$ with the following property: 
for any $\ee \in (0,\frac{1}{8})$, there exists a constant $c_{\ee} >0$ such that, 
for any  $x \in \bb S_+^{d-1}$, $y \in \bb R$, $n\geq 1$,  any nonnegative function $F$ and any function $H \in \mathscr H$
satisfying $F\leq_{\ee} H$, 
\begin{align}
  \bb E F \left( X_n^x, y + S_n^x \right)
&  \leq   \frac{1 + c\ee}{\sigma \sqrt{n}}  \int_{\bb S_+^{d-1} \times \bb R } 
  H \left( x', y' \right)  \phi \left(\frac{y' - y}{ \sigma \sqrt{n} } \right)  \nu(dx')  dy'    \notag\\
& \quad +  \frac{c \ee}{n} \| H \|_{\nu \otimes \Leb}  +  \frac{c_{\ee}}{ n^{(1 + \delta)/2} }  \| H \|_{\mathscr H},     \label{LLT-general001}
\end{align}
and for any  $x \in \bb S_+^{d-1}$, $y \in \bb R$, $n\geq 1$, 
any nonnegative  function $F$ and nonnegative  functions $H, M \in \mathscr H$
satisfying $M \leq_{\ee} F \leq_{\ee} H$, 
\begin{align}
  \bb E F \left( X_n^x, y + S_n^x \right)  
 & \geq  \frac{1}{ \sigma \sqrt{n}} \int_{\bb S_+^{d-1} \times \bb R } 
  (M \left( x', y' \right) - c \ee H(x', y') )   \phi \left(\frac{y' - y}{\sigma \sqrt{n}} \right)  \nu(dx')  dy'     \notag\\
& \quad   -  \frac{c \ee}{n} \| H \|_{\nu \otimes \Leb} 
    -  \frac{c_{\ee}}{ n^{(1 + \delta)/2} }
      \left(  \| H \|_{\mathscr H}  +  \| M \|_{\mathscr H} \right).   
       \label{LLT-general002}
\end{align}
\end{theorem}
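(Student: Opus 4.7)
The plan is to prove both bounds via Fourier inversion coupled with the spectral theory of the perturbed transfer operator
\[
P_t\varphi(x) = \int_{\mathcal M_+} e^{it\rho(g,x)} \varphi(g\cdot x)\mu(dg), \qquad t \in \bb R,
\]
acting on the Banach space $\scr B$. Under \ref{Condi-AP}, \ref{CondiMoment}, \ref{Condi-Lya} and \ref{CondiNonarith}, the Hennion--Hervé theory from \cite{HH01} yields $\delta_0>0$ and a spectral decomposition $P_t = \lambda(t)\Pi_t + N_t$ on $(-\delta_0,\delta_0)$ with $\lambda(0)=1$, $\lambda'(0)=0$, $\lambda''(0)=-\sigma^2$, a rank-one projector $\Pi_t$ depending smoothly on $t$, and $\|N_t^n\|_{\scr B} \leq c\rho^n$ for some $\rho<1$; away from the origin, non-arithmeticity gives $\sup_{\delta_0\leq|t|\leq T}\|P_t^n\|_{\scr B} \leq c_T e^{-cn}$. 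The key identity is $\bb E[\varphi(X_n^x)e^{itS_n^x}] = P_t^n\varphi(x)$.

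First I would exploit the $\ee$-domination hypothesis to reduce to smooth target functions. Fix a nonnegative $C^\infty$ bump $\chi$ on $\bb R$ of integral one supported in $[-1,1]$ and set $\chi_\ee(y)=\ee^{-1}\chi(y/\ee)$. Define upper and lower smoothings
\[
F_\ee^+(x,y) = \sup_{|v|\leq \ee} F(x,y+v)\ast \chi_\ee(y), \qquad F_\ee^-(x,y) = \inf_{|v|\leq \ee} F(x,y+v)\ast\chi_\ee(y),
\]
(or any convenient variant) so that $F_\ee^- \leq F \leq F_\ee^+$, both are smooth in $y$, and by $F\leq_\ee H$ one has $F_\ee^\pm \leq H(\cdot,\cdot+v)$ for $|v|\leq 2\ee$, giving in particular the pointwise bound $F_\ee^\pm \leq H_{2\ee}$ where $H_{2\ee}$ is an average of $H$ of $y$-width $O(\ee)$. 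Then apply Fourier inversion in $y$ to $F_\ee^\pm$: writing $\widehat{F}_\ee^\pm(x,t)$ for the Fourier transform in $y$,
\[
\bb E F_\ee^\pm(X_n^x, y+S_n^x) = \frac{1}{2\pi}\int_{\bb R} e^{-ity}\bigl(P_t^n \widehat{F}_\ee^\pm(\cdot,t)\bigr)(x)\, dt.
\]

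Second, split the $t$-integral into the three regimes $|t|\leq n^{-1/2}\log n$, $n^{-1/2}\log n < |t|\leq \delta_0$, and $|t|>\delta_0$. On the first regime, plug in $P_t^n = \lambda(t)^n \Pi_t + N_t^n$ and expand $\lambda(t)^n = e^{-n\sigma^2 t^2/2}\bigl(1+O(n|t|^{2+\delta})\bigr)$, which is valid under \ref{CondiMoment}; the $\Pi_t$ part, projected against $\nu$, produces the Gaussian convolution $\frac{1}{\sigma\sqrt n}\int H(x',y')\phi\!\left(\tfrac{y'-y}{\sigma\sqrt n}\right)\nu(dx')dy'$, while the $N_t^n$ part and the Edgeworth error give a remainder of size $n^{-(1+\delta)/2}\|H\|_{\scr H}$. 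On the second regime, $\lambda(t)^n$ decays like $e^{-cn|t|^2}$ and contributes a negligible Gaussian tail. On the third regime, combine the exponential decay $\|P_t^n\|_{\scr B} \leq c_T e^{-cn}$ for $\delta_0 \leq |t|\leq T$ with the Schwartz-type decay $|\widehat{\chi_\ee}(t)| \leq c_k (\ee|t|)^{-k}$ for $|t|\geq T$; integrating in $t$ produces the $c_\ee n^{-(1+\delta)/2}\|H\|_{\scr H}$ remainder, the factor $c_\ee$ absorbing the derivatives of $\chi_\ee$.

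The hard part is the bookkeeping that delivers the precise structure of the error: the $O(\ee)$ in the main term and the split between $\|H\|_{\nu\otimes\Leb}$ and $\|H\|_{\scr H}$ in the remainder. The $(1+c\ee)$ factor and the $\frac{c\ee}{n}\|H\|_{\nu\otimes \Leb}$ error both come from replacing $F_\ee^\pm$ by $F \pm c\ee H$ inside the Gaussian main term, using the $\ee$-domination together with the smoothness of $\phi$ at scale $\sigma\sqrt n \gg \ee$; this is where the weaker $\nu\otimes\Leb$ norm appears, since only total mass of $H$ (not its $\scr B$-regularity in $x$) matters for the main-term fluctuation. The stronger norm $\|H\|_{\scr H} = \int\|H(\cdot,y)\|_{\scr B}dy$ is needed solely in the Fourier-side estimates, where $\widehat H(\cdot,t)$ is inserted into $P_t^n$ and must be controlled in $\scr B$. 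Once the upper bound \eqref{LLT-general001} has been carried out for $F_\ee^+$ and applied to $F \leq F_\ee^+$, the lower bound \eqref{LLT-general002} follows by applying the same Fourier machinery to $M \leq_\ee F$ through its own smoothing $M_\ee^-$, yielding $\bb E F(X_n^x,y+S_n^x) \geq \bb E M_\ee^-(X_n^x,y+S_n^x)$ and then using the Gaussian main-term lower bound with the symmetric $(1-c\ee)$ factor absorbed into the $-c\ee H$ correction.
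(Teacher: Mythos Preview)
Your approach is essentially the same as what the paper indicates: the authors do not give a proof but state that it follows from the spectral gap theory of \cite{HH01, HH08} along the lines of \cite{GX21, GQX21}, which is precisely the perturbed-transfer-operator plus Fourier-inversion scheme you outline, including the $\ee$-smoothing, the three-regime split of the $t$-integral, and the separation of the $\|\cdot\|_{\nu\otimes\Leb}$ and $\|\cdot\|_{\scr H}$ error contributions. One small remark: you invoke \ref{CondiNonarith}, which is not listed among the hypotheses of the theorem as stated; this appears to be an omission in the paper (the local limit theorem genuinely requires non-arithmeticity for the decay $\|P_t^n\|_{\scr B}\leq c_Te^{-cn}$ on $\delta_0\leq|t|\leq T$, and every downstream application in the paper does assume \ref{CondiNonarith}), so your inclusion of it is correct rather than an overreach.
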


By using the spectral gap theory (cf.\ \cite{HH01, HH08}) for products of positive random matrices, 
the proof of Theorem \ref{LLT-general} can be carried out in an analogous way as those in \cite{GX21, GQX21}
and hence the details are omitted.


\subsection{Contraction properties}

By \cite{Hen97}, there exists $c(g) \in (0, 1]$ such that for any $x, x' \in \bb S_+^{d-1}$,  
\begin{align}\label{inequality-contraction}
\bf d (g \cdot x, g \cdot x' ) \leq c(g) \mathbf{d}(x, x'), 
\end{align}
and $c(g) < 1$ if and only if $g$ is a strictly positive matrix. 
By \cite{HH01, HH08}, under conditions \ref{Condi-AP} and \ref{CondiMoment}, 
there exist constants $r \in (0, 1)$ and $c >0$  such that for any $x, x' \in \bb S_+^{d-1}$ and $n \geq 1$, 
\begin{align}\label{Contractovity-Markov-chain}
\bb E  \left[ \bf d( X_n^x, X_n^{x'}) \right]
\leq  c  \,  r^n \bf d(x, x'). 
\end{align}
Using \eqref{Contractovity-Markov-chain} we prove the following result.

\begin{lemma}\label{Lem-conti-square}
Assume \ref{Condi-AP} and \ref{CondiMoment}.   
Then there exists a constant $c >0$  such that for any $x, x' \in \bb S_+^{d-1}$ and $n \geq 1$, 
\begin{align}\label{Compare-starting-x01}
\bb E  \Big| S_n^x -  S_n^{x'}  \Big| 
\leq 
\bb E^{1/2}  \Big[ \Big| S_n^x -  S_n^{x'}  \Big|^2  \Big]
\leq  c \,  \bf d(x, x')
\end{align}
and 
\begin{align}\label{Compare-starting-x01-bb}
\bb E  \left[ \left| \min_{1 \leq j \leq n}  S_j^x  -  \min_{1 \leq j \leq n}  S_j^{x'} \right| \right]
\leq 
\bb E^{1/2}  \Big[ \Big| \min_{1 \leq j \leq n}  S_j^x  -  \min_{1 \leq j \leq n}  S_j^{x'}  \Big|^2  \Big]
\leq  c \,  \bf d(x, x'). 
\end{align}
\end{lemma}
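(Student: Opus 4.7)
The plan is to split into two cases according to the size of $\bf{d}(x, x')$.

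\textbf{Case 1: $\bf{d}(x, x') \leq 1/2$.} By the cocycle property, for every $j \geq 0$ one has $S_j^x - S_j^{x'} = \rho(G_j, x) - \rho(G_j, x')$ with $G_j = g_j \cdots g_1 \in \mathcal M_+$. Applying \cite[Lemma 3.1]{HH08} (as already recalled in the proof of Lemma \ref{Prop-MartApp}), which holds uniformly in $G_j \in \mathcal M_+$ whenever $\bf{d}(x, x') \leq 1/2$, yields the deterministic pointwise bound $|S_j^x - S_j^{x'}| \leq c\, \bf{d}(x, x')$. Consequently $\max_{1 \leq j \leq n} |S_j^x - S_j^{x'}| \leq c\, \bf{d}(x, x')$ and, using the elementary inequality $|\min_j S_j^x - \min_j S_j^{x'}| \leq \max_j |S_j^x - S_j^{x'}|$, both $L^2$ estimates hold without taking any expectation. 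The first inequality in each display of the lemma is Cauchy--Schwarz.

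\textbf{Case 2: $\bf{d}(x, x') > 1/2$.} Since $1/2 < \bf{d}(x, x') \leq 1$, the target inequality is equivalent to a uniform (in $n$, $x$, $x'$) bound $\bb E[|\cdot|^2] \leq C$. I introduce the coupling time
\[
\tau_0 = \inf \{ k \geq 0 : \bf{d}(X_k^x, X_k^{x'}) \leq 1/2 \},
\]
which by Markov's inequality and the geometric contraction \eqref{Contractovity-Markov-chain} satisfies $\bb P(\tau_0 > k) \leq 2 c r^k$, hence has exponential tails. On $\{\tau_0 \leq j\}$, the cocycle identity applied to the block $g_j \cdots g_{\tau_0 + 1}$ combined again with \cite[Lemma 3.1]{HH08} gives $|(S_j^x - S_{\tau_0}^x) - (S_j^{x'} - S_{\tau_0}^{x'})| \leq c\, \bf{d}(X_{\tau_0}^x, X_{\tau_0}^{x'}) \leq c/2$; on $\{\tau_0 > j\}$ the difference $|S_j^x - S_j^{x'}|$ is trivially bounded by $2\sum_{k=1}^{j} \log N(g_k)$. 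Combined with the a priori bound $|S_{j \wedge \tau_0}^x - S_{j \wedge \tau_0}^{x'}| \leq 2 \sum_{k=1}^{\tau_0} \log N(g_k)$, this yields the deterministic domination
\[
\max_{1 \leq j \leq n} |S_j^x - S_j^{x'}| \leq W, \qquad W := 2 \sum_{k=1}^{\tau_0} \log N(g_k) + \frac{c}{2},
\]
which, being independent of $n$, controls the terminal statement and the running-minimum statement simultaneously.

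It then remains to show $\bb E[W^2] \leq C$ uniformly in $x, x'$. The diagonal contributions are handled using that $\{k \leq \tau_0\} \in \mathscr F_{k-1}$ is independent of $g_k$:
\[
\bb E\left[(\log N(g_k))^2 \mathds 1_{k \leq \tau_0}\right] = \bb E\left[(\log N(g_k))^2\right] \bb P(k \leq \tau_0) \leq C r^{k-1},
\]
which is summable in $k$. For the off-diagonal terms with $j < k$, I factor out $\log N(g_k)$ using again its independence from $\mathscr F_{k-1}$, then apply Cauchy--Schwarz to $\bb E[\log N(g_j) \mathds 1_{k \leq \tau_0}]$ using the second moment from \ref{CondiMoment} and the exponential tail of $\tau_0$, obtaining geometrically summable contributions of order $r^{(k-1)/2}$. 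The main obstacle is precisely the coupling between $\tau_0$ and the factors $g_k$ for $k \leq \tau_0$; the observation that $\{k \leq \tau_0\}$ is $\mathscr F_{k-1}$-measurable while $g_k$ is independent of $\mathscr F_{k-1}$ is what makes all the required factorizations work using only the second moment of $\log N(g)$ that is comfortably supplied by \ref{CondiMoment}.
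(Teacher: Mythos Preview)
Your proof is correct but follows a genuinely different route from the paper's. The paper does not split into cases on $\mathbf{d}(x,x')$. Instead it writes the difference as a telescoping sum of one-step increments, $|S_n^x - S_n^{x'}| \leq \sum_{k=1}^n \sigma_{\mathrm{Lip}}(g_k)\,\mathbf{d}(X_{k-1}^x, X_{k-1}^{x'})$, applies Minkowski in $L^2$, uses the independence of $g_k$ from $\mathscr F_{k-1}$ to factor each summand, and then controls $\bb E^{1/2}[\mathbf{d}(X_{k-1}^x, X_{k-1}^{x'})^2]$ by a truncation at level $\frac{1}{k^2}\mathbf{d}(x,x')$ combined with Markov's inequality and the first-moment contraction \eqref{Contractovity-Markov-chain}; this yields a summable bound $\sum_k(\frac{1}{k^2}+ck\,r^{(k-1)/2})\,\mathbf{d}(x,x')$. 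Your Case~1, by contrast, collapses to a single application of \cite[Lemma~3.1]{HH08} to the product $G_j=g_j\cdots g_1$, giving an almost-sure pointwise bound with no expectation at all; your Case~2 replaces the per-step analysis by a coupling-time argument with exponential tails. What the paper's approach buys is uniformity (no case split, and it only needs $\bb E[\sigma_{\mathrm{Lip}}(g)^2]<\infty$ rather than the full uniform Lipschitz constant on $\{\mathbf{d}\leq 1/2\}$); what your approach buys is that Case~1 already delivers the deterministic estimate the paper later re-proves as Lemma~\ref{Lem-contractivity} under the stronger condition \ref{Condi-FK}, and Case~2 makes the role of the geometric ergodicity more explicit.
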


\begin{proof}
The first inequalities in \eqref{Compare-starting-x01} and \eqref{Compare-starting-x01-bb} follow directly by Cauchy-Schwarz's inequality. 
We now prove the second one in \eqref{Compare-starting-x01}. 
Since $\log |g_2 g_1 x| = \log |g_2 (g_1 \cdot x)| + \log |g_1 x|$ for any $g_1, g_2 \in \mathcal M_+$
and $x \in \bb S_+^{d-1}$, we get that for any $1 \leq j \leq n$ and $x, x' \in \bb S_+^{d-1}$, 
\begin{align}\label{inequa-Snx1-Snx2}
S_j^x  \leq  S_j^{x'} + \sum_{k=1}^j  \sigma_{\textup{Lip}}(g_k) \bf d \left( X_{k-1}^x, X_{k-1}^{x'} \right), 
\end{align}
where we use the convention $g_0 x = x$ for $x \in \bb S_+^{d-1}$, and denote
\begin{align*}
\sigma_{\textup{Lip}}(g) = \sup_{x, x' \in \bb S_+^{d-1}: x \neq x'} \frac{ |\log |gx| - \log |gx'| | }{ \bf d(x, x') }. 
\end{align*}
By \eqref{inequa-Snx1-Snx2}, we have 
\begin{align*}
\Big| S_n^x -  S_n^{x'}  \Big|
 \leq  \sum_{k=1}^n  \sigma_{\textup{Lip}}(g_k) \bf d \left( X_{k-1}^x, X_{k-1}^{x'} \right). 
\end{align*}
Using the inequality $\bb E^{1/2} [(X+Y)^2] \leq \bb E^{1/2} (X^2) + \bb E^{1/2} (Y^2)$ 
and the fact that $g_k$ is independent of $g_{k-1} \cdots g_1$, we get
\begin{align}\label{inequality-Sn-x-x}
\bb E^{1/2}  \Big[ \Big| S_n^x -  S_n^{x'}  \Big|^2  \Big]
& \leq \bb E^{1/2}  \Big[ \Big| \sum_{k=1}^n  
   \sigma_{\textup{Lip}}(g_k) \bf d \left( X_{k-1}^x, X_{k-1}^{x'} \right) \Big|^2  \Big] \notag\\
& \leq  \sum_{k=1}^n  \bb E^{1/2}  \Big[ \Big| \sigma_{\textup{Lip}}(g_k) \bf d \left( X_{k-1}^x, X_{k-1}^{x'} \right) \Big|^2  \Big]  \notag\\
& =  \sum_{k=1}^n  \bb E^{1/2}  \Big[ \Big| \sigma_{\textup{Lip}}(g_k) \Big|^2  \Big]
\bb E^{1/2}  \Big[ \Big|  \bf d \left( X_{k-1}^x, X_{k-1}^{x'} \right) \Big|^2  \Big]  \notag\\
& \leq  c \sum_{k=1}^n   \bb E^{1/2}  \Big[ \Big|  \bf d \left( X_{k-1}^x, X_{k-1}^{x'} \right) \Big|^2  \Big], 
\end{align}
where in the last inequality we used the fact that $\bb E  \Big[ \Big| \sigma_{\textup{Lip}}(g_k) \Big|^2  \Big] \leq c$ for some constant $c$, 
under condition \ref{CondiMoment}. 
Using  \eqref{inequality-contraction}, we have
\begin{align*}
 \bb E^{1/2}  \Big[ \Big|  \bf d \left( X_{k-1}^x, X_{k-1}^{x'} \right) \Big|^2  \Big]  
& =  \bb E^{1/2}  \Big[ \Big|  \bf d \left( X_{k-1}^x, X_{k-1}^{x'}  \right) \Big|^2  
      \mathds 1_{ \{ \bf d \left( X_{k-1}^x, X_{k-1}^{x'} \right) \leq  \frac{1}{k^2}  \bf d (x, x') \} }   \Big]  \notag\\
& \quad  + \bb E^{1/2}  \Big[ \Big|  \bf d \left( X_{k-1}^x, X_{k-1}^{x'} \right) \Big|^2  
  \mathds 1_{ \{ \bf d \left( X_{k-1}^x, X_{k-1}^{x'}  \right) >  \frac{1}{k^2}  \bf d (x, x') \} }   \Big]  \notag\\
& \leq  \frac{1}{k^2}  \bf d (x, x') 
+  \bf d (x, x')  \bb P^{1/2} \left( \bf d \left( X_{k-1}^x, X_{k-1}^{x'} \right) >  \frac{1}{k^2}  \bf d (x, x')  \right). 
\end{align*}
By Markov's inequality and \eqref{Contractovity-Markov-chain}, 
there exist constants $r \in (0, 1)$ and $c >0$  such that for any $x, x' \in \bb S_+^{d-1}$ and $k \geq 1$, 
\begin{align*}
\bb P \left( \bf d \left( X_{k-1}^x, X_{k-1}^{x'} \right) >  \frac{1}{k^2}  \bf d (x, x')  \right)
 \leq  \frac{k^2}{ \bf d (x, x') }  \bb E  \left[ \bf d \left( X_{k-1}^x, X_{k-1}^{x'} \right) \right] 
 \leq c  k^2  r^{k-1}. 
\end{align*}
Therefore, 
\begin{align*}
\bb E^{1/2}  \Big[ \Big|  \bf d \left( X_{k-1}^x, X_{k-1}^{x'} \right) \Big|^2  \Big]
\leq  \bf d (x, x')  \left( \frac{1}{k^2}  +  c  k  r^{ (k-1)/2 }  \right), 
\end{align*}
so that 
\begin{align}\label{inequa-distance-x-x}
\sum_{k=1}^n   \bb E^{1/2}  \Big[ \Big|  \bf d \left( X_{k-1}^x, X_{k-1}^{x'} \right) \Big|^2  \Big]
\leq  c  \bf d (x, x')  \sum_{k=1}^n  \left( \frac{1}{k^2}  +  c  k  r^{ (k-1)/2 }  \right) 
\leq  c'  \bf d (x, x'), 
\end{align}
which, combined with \eqref{inequality-Sn-x-x}, ends the proof of the second one in \eqref{Compare-starting-x01}. 

We next show the second inequality in \eqref{Compare-starting-x01-bb}. 
By \eqref{inequa-Snx1-Snx2}, for any $1 \leq j \leq n$ and $x, x' \in \bb S_+^{d-1}$, 
\begin{align*}
\min_{1 \leq j \leq n}  S_j^x 
\leq S_j^x 
 \leq  S_j^{x'} + \sum_{k=1}^n  \sigma_{\textup{Lip}}(g_k) \bf d \left( X_{k-1}^x, X_{k-1}^{x'} \right).  
\end{align*}
Taking the minimum over $1 \leq j \leq n$ on the right-hand side, we get for any $x, x' \in \bb S_+^{d-1}$, 
\begin{align}\label{Inequality-minimum-01}
\min_{1 \leq j \leq n} S_j^x 
 \leq  \min_{1 \leq j \leq n}  S_j^{x'}
 + \sum_{k=1}^n  \sigma_{\textup{Lip}}(g_k) \bf d \left( X_{k-1}^x, X_{k-1}^{x'} \right).  
\end{align}
For the lower bound, using the inequality
\begin{align*}
S_j^x 
 \geq  S_j^{x'} - \sum_{k=1}^j  \sigma_{\textup{Lip}}(g_k) \bf d \left( X_{k-1}^x, X_{k-1}^{x'} \right)  
 \geq  S_j^{x'} - \sum_{k=1}^n  \sigma_{\textup{Lip}}(g_k) \bf d \left( X_{k-1}^x, X_{k-1}^{x'} \right), 
\end{align*}
one can check that for any $x, x' \in \bb S_+^{d-1}$ and $n \geq 1$, 
\begin{align}\label{Inequality-minimum-02}
\min_{1 \leq j \leq n}  S_j^x 
 \geq  \min_{1 \leq j \leq n}  S_j^{x'}
  - \sum_{k=1}^n  \sigma_{\textup{Lip}}(g_k) \bf d \left( X_{k-1}^x, X_{k-1}^{x'} \right).  
\end{align}
Combining \eqref{Inequality-minimum-01} and \eqref{Inequality-minimum-02},
we obtain 
\begin{align*}
\bb E^{1/2} \left[ \left| \min_{1 \leq j \leq n}  S_j^x  - \min_{1 \leq j \leq n}  S_j^{x'}  \right|^2 \right] 
 \leq  \bb E^{1/2}  \Big[ \Big| \sum_{k=1}^n  
   \sigma_{\textup{Lip}}(g_k) \bf d \left( X_{k-1}^x, X_{k-1}^{x'} \right) \Big|^2  \Big]  
    \leq  c \,  \bf d(x, x'),
\end{align*}
where in the last inequality we used \eqref{inequality-Sn-x-x} and \eqref{inequa-distance-x-x}. 
This ends the proof of \eqref{Compare-starting-x01-bb}. 
\end{proof}

\begin{lemma}\label{Lem_Integral_LLT}
Assume \ref{Condi-AP} and \ref{CondiMoment}.  
Let $\rho$ be a measurable and compactly supported function on $\bb R$. 
Then, there exists a constant $c = c(\rho) >0$ such that for  
any integrable function $H$ on $\bb R$ and any $n \geq 1$, 
\begin{align}\label{inequa-H-inte-0a}
\int_{\bb R}  \sup_{x \in \bb S_+^{d-1}} \bb E  \Big| H *  \rho \left( y + S_n^x \right) \Big|   dy
\leq   c n^{1/4}   \int_{\bb R} |H(u)| du 
\end{align}
and
\begin{align}\label{inequa-H-inte-0b}
\int_{\bb R}  \sup_{x \in \bb S_+^{d-1}} \bb E^{1/2} \Big[   \Big| H *  \rho \left( y + S_n^x \right) \Big|^2  \Big]   dy
\leq   c n^{ \frac{4 + \delta}{ 8 + 4 \delta } }   \int_{\bb R} |H(u)| du  
\end{align} 
where $\delta >0$ is from the moment condition \ref{CondiMoment}. 
\end{lemma}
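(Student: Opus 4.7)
The plan is to reduce both estimates, via Minkowski-type manipulations, to a single scalar quantity
\begin{align*}
I_p(n) := \int_{\bb R} \sup_{x \in \bb S_+^{d-1}} \bb E \big[ |\rho(y+S_n^x)|^p \big]^{1/p} \, dy, \quad p \in \{1,2\},
\end{align*}
and then to bound $I_p(n)$ by splitting the $y$-integration at a cutoff $|y|\sim A\sqrt{n}$, using the local limit theorem on the bulk and a Chebyshev tail bound on the rest.

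For the reduction step, I will start from $|H*\rho(y+S_n^x)| \leq \int|H(u)||\rho(y+S_n^x-u)|\,du$, take expectation (for \eqref{inequa-H-inte-0a}) or Minkowski's integral inequality in $L^2(\bb P)$ (for \eqref{inequa-H-inte-0b}), then the $\sup$ in $x$, integrate in $y$, and apply Fubini together with the translation $y\mapsto y+u$. This produces the upper bound $\|H\|_1\cdot I_p(n)$ for $p=1$ and $p=2$ respectively, so it suffices to prove $I_1(n)\leq cn^{1/4}$ and $I_2(n)\leq cn^{(4+\delta)/(8+4\delta)}$.

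For the bulk regime $|y|\leq A\sqrt{n}$, I will apply Theorem \ref{LLT-general} to $F(x,y):=|\rho(y)|^p$ with envelope $H$ taken to be a Lipschitz mollification of $|\rho|^p$, constant in the $x$-variable and compactly supported, hence in $\mathscr H$. Since $\rho$ is compactly supported, the leading term $\frac{1+c\ee}{\sigma\sqrt n}\int |\rho(y')|^p\phi(\frac{y'-y}{\sigma\sqrt n})\,dy'$ is $O(n^{-1/2})$ uniformly in $x,y$, and the error terms are of order $n^{-1}$ and $n^{-(1+\delta)/2}$, both negligible. This yields the uniform bound $\bb E|\rho(y+S_n^x)|^p \leq c/\sqrt{n}$, contributing $O(A)$ to $I_1(n)$ and $O(An^{1/4})$ to $I_2(n)$. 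For the tail regime $|y|>A\sqrt{n}$, compact support of $\rho$ forces $|S_n^x|\geq |y|-R$ on the event $\{\rho(y+S_n^x)\neq 0\}$, where $R$ is the radius of $\supp\rho$. Using Lemma \ref{Prop-MartApp} together with a standard Burkholder--Rosenthal type inequality applied to the martingale $(M_n^x)$ under \ref{CondiMoment}, I will derive the uniform moment bound $\sup_x\bb E|S_n^x|^{2+\delta}\leq cn^{1+\delta/2}$. Chebyshev with the second moment controls the tail of $I_1(n)$ by $O(\sqrt{n}/A)$, while Chebyshev with the $(2+\delta)$-moment gives $\bb E^{1/2}[|\rho(y+S_n^x)|^2]\leq cn^{(2+\delta)/4}/|y|^{(2+\delta)/2}$, whose integral over the tail region equals $O(n^{1/2}/A^{\delta/2})$.

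Finally, optimizing $A$: balancing $A \sim \sqrt{n}/A$ gives $A=n^{1/4}$ and $I_1(n)=O(n^{1/4})$; balancing $An^{1/4}\sim n^{1/2}/A^{\delta/2}$ yields $A=n^{1/(4+2\delta)}$ and $I_2(n)=O(n^{(4+\delta)/(8+4\delta)})$, which match the claimed rates. The main technical obstacle is justifying the use of Theorem \ref{LLT-general} when $\rho$ is only assumed measurable: one must construct a legitimate Lipschitz $\ee$-envelope $H\in\mathscr H$, which I will do by standard mollification of $|\rho|^p$; since $\rho$ is compactly supported this procedure preserves the relevant $L^1$ and $L^\infty$ norms up to constants depending only on $\rho$, and therefore does not affect the final rates.
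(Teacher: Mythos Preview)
Your proposal is correct and follows essentially the same approach as the paper. Both reduce to bounding $\int_{\bb R}\sup_x \bb E[|\rho(z+S_n^x)|^p]^{1/p}\,dz$ via Fubini and (for $p=2$) Minkowski's integral inequality, then split the $z$-integral at a threshold, apply the local limit theorem \eqref{LLT-general001} on the bulk and a Chebyshev-type tail bound on the rest; your optimized cutoffs $A\sqrt n$ with $A=n^{1/4}$ and $A=n^{1/(4+2\delta)}$ coincide exactly with the paper's choices $n^{3/4}$ and $n^{(3+\delta)/(4+2\delta)}$. Two minor remarks: the paper cites \cite[Lemmas 4.3, 4.4]{GLP17} for the moment bound $\sup_x\bb E|S_n^x|^{2+\delta}\leq cn^{1+\delta/2}$ rather than rederiving it via Burkholder, and your concern about mollifying $|\rho|^p$ is slightly overstated---since the envelope $H$ in Theorem \ref{LLT-general} is constant in $x$, membership in $\mathscr H$ only requires measurability and integrability in $y$, so one can simply take $H(y)=\sup_{|v|\leq\ee}|\rho(y+v)|^p$ (or, as the paper does, bound $|\rho|^p$ by a constant times the indicator of $\supp\rho$ and apply \eqref{LLT-general001} directly to that).
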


\begin{proof}
We first prove \eqref{inequa-H-inte-0a}. 
Let $[-c_0, c_0]$ be the support of the function $\rho$, for some constant $c_0>0$. 
Since 
\begin{align}\label{inequa-convolution-nn}
H *  \rho \left( y + S_n^x \right)
= \int_{\bb R}  \rho \left( y + S_n^x - z' \right) H(z') dz'
=  \int_{\bb R}  \rho \left( z + S_n^x \right) H(y-z) dz, 
\end{align}
by Fubini's theorem, it holds that 
\begin{align}\label{decomp-Exp-H-rho}
\int_{\bb R}  \sup_{x \in \bb S_+^{d-1}} \bb E  \Big| H *  \rho \left( y + S_n^x \right) \Big|   dy
& \leq  \int_{\bb R} \left( \int_{\bb R}  |H(y-z)|  
  \sup_{x \in \bb S_+^{d-1}}  \bb E |\rho \left( z + S_n^x \right)|  dz    \right)  dy  \notag\\
&  =  \int_{\bb R} |H(u)| du 
 \int_{\bb R}   \sup_{x \in \bb S_+^{d-1}}  \bb E |\rho \left( z + S_n^x \right)|  dz  \notag\\
& \leq  \sup_{t \in [-c_0, c_0]}  \rho(t)  \int_{\bb R} |H(u)| du 
\int_{\bb R} \sup_{x\in \bb S_+^{d-1}}  \bb{P}  \left(  z + S_n^x  \in [-c_0, c_0] \right) dz. 
\end{align}
Now we decompose the integral into two parts: 
$\int_{\bb R} \sup_{x\in \bb S_+^{d-1}}  \bb{P}  \left(  z + S_n^x  \in [-c_0, c_0] \right) dz = J_1(n) + J_2(n)$, 
where 
\begin{align*}
& J_1(n) = \int_{|z| \leq 2 c_0 n^{3/4}} 
     \sup_{x\in \bb S_+^{d-1}}  \bb{P}  \left(  z + S_n^x  \in [-c_0, c_0] \right) dz,  \\
& J_2(n) = \int_{|z| > 2 c_0 n^{3/4} } 
     \sup_{x\in \bb S_+^{d-1}}  \bb{P}  \left(  z + S_n^x  \in [-c_0, c_0] \right)  dz. 
\end{align*}

For $J_1(n)$, 
since $\rho$ is supported on $[-c_0, c_0]$, by the local limit theorem (\eqref{LLT-general001} of Theorem \ref{LLT-general}),
there exists a constant $c >0$ such that for any $n \geq 1$, 
\begin{align}\label{LLT-integral}
 \sup_{x\in \bb S_+^{d-1}} \sup_{z\in \mathbb{R}}  \bb{P}  \left(  z + S_n^x  \in [-c_0, c_0] \right)
 \leq  \frac{c}{\sqrt{n}} (c_0 + 1). 
\end{align}
Hence there exists a constant $c = c(\rho) >0$ such that for any $n \geq 1$, 
\begin{align}
J_1(n)  \leq   c n^{1/4}.    \label{Pf_dual_Bound_I1}
\end{align}

For $J_2(n)$, 
we first deal with the case when $z > 2c_0 n^{3/4}$, 
so that it holds that $c_0 - z \leq -\frac{z}{2}$ for any $n \geq 1$.   
Under condition \ref{CondiMoment}, 
by Lemmas 4.3 and 4.4 of \cite{GLP17}, there exists a constant $c>0$ such that for any $n \geq 1$, it holds
$\sup_{x \in \bb S_+^{d-1}} \bb E ( |S_n^x|^{2 + \delta} ) \leq c n^{1+ \frac{\delta}{2}}$,
where $\delta > 0$ is from condition \ref{CondiMoment}. 
Therefore, by Markov's inequality,  
there exists a constant $c>0$ such that for any $x \in \bb S_+^{d-1}$,  $n \geq 1$ and $z > 2c_0 n^{3/4}$, 
\begin{align}\label{second-mom-inequa}
\bb{P}  \left(  z + S_n^x  \in [-c_0, c_0] \right)
  \leq  \bb{P}  \left(  S_n^x \leq  -\frac{z}{2} \right) 
 \leq  \frac{c}{z^{2 + \delta}}  \bb E  \left( |S_n^x|^{2 + \delta} \right)
 \leq  \frac{c}{z^{2 + \delta}} n^{1+ \frac{\delta}{2}}. 
\end{align}
so that 
\begin{align*}
\int_{n^{3/4}}^{\infty}  \sup_{x\in \bb S_+^{d-1}}  \bb{P}  \left(  z + S_n^x  \in [-c_0, c_0] \right)  dz
\leq  \frac{c}{ n^{ \frac{3}{4} (1 + \delta) } }  n^{1+ \frac{\delta}{2}} 
\leq  c n^{1/4}.  
\end{align*}
In the same way, we can show that 
\begin{align*}
\int_{-\infty}^{- n^{3/4}}  \sup_{x\in \bb S_+^{d-1}}  \bb{P}  \left(  z + S_n^x  \in [-c_0, c_0] \right)  dz
\leq  c n^{1/4}. 
\end{align*}
Therefore, there exists a constant $c = c(\rho) > 0$ such that 
\begin{align}
J_2(n) \leq  c n^{1/4}. \label{Pf_dual_Bound_I3}
\end{align}
Substituting \eqref{Pf_dual_Bound_I1} and \eqref{Pf_dual_Bound_I3} into \eqref{decomp-Exp-H-rho} 
concludes the proof of \eqref{inequa-H-inte-0a}. 

We next prove \eqref{inequa-H-inte-0b}. 
By \eqref{inequa-convolution-nn} and Minkowski's inequality for integrals, 
we get
\begin{align*}
\bb E^{1/2} \Big[   \Big| H *  \rho \left( y + S_n^x \right) \Big|^2  \Big]
& =  \bb E^{1/2} \left[  \left( \int_{\bb R}  \rho \left( z + S_n^x \right) H(y-z) dz  \right)^2  \right] \notag\\
& \leq  \int_{\bb R}  \bb E^{1/2} \left( |\rho \left( z + S_n^x \right)|^2  |H(y-z)|^2 \right)  dz  \notag\\
& =  \int_{\bb R}  |H(y-z)|  \bb E^{1/2} \left( |\rho \left( z + S_n^x \right)|^2  \right)  dz.
\end{align*}
Therefore, by Fubini's theorem, we obtain 
\begin{align*}
\int_{\bb R}  \sup_{x \in \bb S_+^{d-1}} \bb E^{1/2} \Big[   \Big| H *  \rho \left( y + S_n^x \right) \Big|^2  \Big]   dy
& \leq  \int_{\bb R}  \left( \int_{\bb R} |H(y-z)| dy \right)  \sup_{x \in \bb S_+^{d-1}}  \bb E^{1/2} \left( |\rho \left( z + S_n^x \right)|^2  \right) dz  \notag\\
& =  \int_{\bb R} |H(u)| du   \int_{\bb R}  \sup_{x \in \bb S_+^{d-1}}  \bb E^{1/2} \left( |\rho \left( z + S_n^x \right)|^2  \right) dz. 
\end{align*}
Since $\rho$ is supported on $[-c_0, c_0]$, we get 
\begin{align*}
 \int_{\bb R}  \sup_{x \in \bb S_+^{d-1}}  \bb E^{1/2} \left( |\rho \left( z + S_n^x \right)|^2  \right) dz
 \leq  c  \int_{\bb R}  \sup_{x \in \bb S_+^{d-1}} \bb P^{1/2} \left( z + S_n^x \in [-c_0, c_0] \right) dz = c (I_1(n) + I_2(n)),
\end{align*}
where, for $a = \frac{3 + \delta}{ 4 + 2 \delta }$ with $\delta > 0$ from condition \ref{CondiMoment}, 
\begin{align*}
& I_1(n) =  \int_{|z| \leq 2 c_0 n^a}  \sup_{x \in \bb S_+^{d-1}} \bb P^{1/2} \left( z + S_n^x \in [-c_0, c_0] \right) dz,  \notag\\
& I_2(n) =  \int_{|z| > 2 c_0 n^a}  \sup_{x \in \bb S_+^{d-1}} \bb P^{1/2} \left( z + S_n^x \in [-c_0, c_0] \right) dz. 
\end{align*}
For $I_1(n)$, by \eqref{LLT-integral}, there exists a constant $c = c(\rho) >0$ such that for any $n \geq 1$,
\begin{align*}
I_1(n) \leq c n^{a-\frac{1}{4}} = c n^{ \frac{4 + \delta}{ 8 + 4 \delta } }. 
\end{align*}
For $I_2(n)$, by \eqref{second-mom-inequa}, 
there exists $c>0$ such that for any $x \in \bb S_+^{d-1}$,  $n \geq 1$ and $z > 2c_0 n^{a}$, 
\begin{align*}
\int_{n^{a}}^{\infty}  \sup_{x\in \bb S_+^{d-1}}  \bb{P}^{1/2}  \left(  z + S_n^x  \in [-c_0, c_0] \right)  dz
\leq  c \int_{n^{a}}^{\infty}  \left(  \frac{1}{z^{2 + \delta}} n^{1+ \frac{\delta}{2}}  \right)^{1/2}  dz
=   c n^{ \frac{1}{2} + \frac{\delta}{4} - \frac{a \delta}{2} } 
= c n^{ \frac{4 + \delta}{ 8 + 4 \delta } }. 
\end{align*}
Similarly, we can show that 
\begin{align*}
\int_{-\infty}^{- n^{a}}  \sup_{x\in \bb S_+^{d-1}}  \bb{P}^{1/2}  \left(  z + S_n^x  \in [-c_0, c_0] \right)  dz
\leq  c n^{ \frac{4 + \delta}{ 8 + 4 \delta } }. 
\end{align*}
Hence $I_1(n) + I_2(n) \leq c n^{ \frac{4 + \delta}{ 8 + 4 \delta } }$, completing the proof of \eqref{inequa-H-inte-0b}. 
\end{proof}

As a consequence of Lemmas \ref{Lem-conti-square} and \ref{Lem_Integral_LLT}, we get the following result. 

\begin{lemma}\label{Lem_Integral_LLT-bb}
Let $\rho$ be a measurable and compactly supported function on $\bb R$. 
Then, there exists a constant $c = c(\rho) >0$ such that for 
any integrable function $H$ on $\bb R$ and any $n \geq 1$, 
\begin{align*}
\int_{\bb R}  \sup_{x, x' \in \bb S_+^{d-1}: x \neq x'} 
\frac{\bb E  \Big| \left( S_{n}^{x} - S_{n}^{x'}  \right) H *  \rho \left( y + S_n^x \right) \Big| }{ \mathbf d (x, x') }  dy
\leq   c  n^{ \frac{4 + \delta}{ 8 + 4 \delta } }  \int_{\bb R} |H(u)| du
\end{align*}
and 
\begin{align*}
\int_{\bb R}  \sup_{x, x' \in \bb S_+^{d-1}: x \neq x'} 
\frac{\bb E  \Big| \left( \min_{1 \leq j \leq n} S_{j}^{x} - \min_{1 \leq j \leq n} S_{j}^{x'}  \right) H *  \rho \left( y + S_n^x \right) \Big| }{ \mathbf d (x, x') }  dy
\leq   c  n^{ \frac{4 + \delta}{ 8 + 4 \delta } }  \int_{\bb R} |H(u)| du 
\end{align*}
where $\delta >0$ is from condition \ref{CondiMoment}. 
\end{lemma}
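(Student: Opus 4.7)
The plan is to combine the two preceding lemmas via the Cauchy--Schwarz inequality. For any fixed $x, x' \in \bb S_+^{d-1}$ with $x \neq x'$ and any $y \in \bb R$, apply Cauchy--Schwarz to the expectation:
\begin{align*}
\bb E \Big| \bigl( S_n^x - S_n^{x'} \bigr) H * \rho(y + S_n^x) \Big|
\leq \bb E^{1/2}\Big[ |S_n^x - S_n^{x'}|^2 \Big] \cdot \bb E^{1/2}\Big[ |H * \rho(y + S_n^x)|^2 \Big].
\end{align*}
By \eqref{Compare-starting-x01} in Lemma \ref{Lem-conti-square}, the first factor is bounded by $c \, \mathbf d(x, x')$. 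Dividing by $\mathbf d(x, x')$ makes the left-hand side bounded by $c$ times a quantity depending only on $x$, so taking the supremum over $x, x'$ gives
\begin{align*}
\sup_{x, x' \in \bb S_+^{d-1}: x \neq x'}
\frac{\bb E  \big| ( S_n^x - S_n^{x'} ) H *  \rho ( y + S_n^x ) \big| }{ \mathbf d (x, x') }
\leq c \sup_{x \in \bb S_+^{d-1}} \bb E^{1/2}\Big[ |H * \rho(y + S_n^x)|^2 \Big].
\end{align*}

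Integrating over $y \in \bb R$ and invoking \eqref{inequa-H-inte-0b} of Lemma \ref{Lem_Integral_LLT} then yields the first inequality, with constant $c = c(\rho)$ and the advertised exponent $\frac{4 + \delta}{8 + 4 \delta}$. The second inequality, involving $\min_{1 \leq j \leq n} S_j^x - \min_{1 \leq j \leq n} S_j^{x'}$, is established in exactly the same way, using \eqref{Compare-starting-x01-bb} in Lemma \ref{Lem-conti-square} instead of \eqref{Compare-starting-x01} to bound the first Cauchy--Schwarz factor by $c \, \mathbf d(x, x')$.

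There is no genuine obstacle here: all the heavy lifting has already been done. The mild point worth checking is that the supremum over the pair $(x, x')$ can be absorbed into a supremum over $x$ alone after applying Cauchy--Schwarz, which works precisely because the factor $\bb E^{1/2}[|H * \rho(y + S_n^x)|^2]$ no longer depends on $x'$. Everything else is a direct quotation of the two preceding lemmas.
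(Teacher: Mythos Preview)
Your proof is correct and follows exactly the approach the paper uses: the paper's own proof is a one-line appeal to Lemmas \ref{Lem-conti-square} and \ref{Lem_Integral_LLT} together with the Cauchy--Schwarz inequality, and you have simply written out those details explicitly.
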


\begin{proof}
The assertion follows from Lemmas \ref{Lem-conti-square} and \ref{Lem_Integral_LLT}, together with the Cauchy-Schwarz inequality. 
\end{proof}

Now we introduce some notations which will be used in the sequel. 
For $\ee  >0 $, we define the function $\chi_{\ee}: \bb R \to \bb R_+$ as follows: 
\begin{align}\label{Def_chiee}
\chi_{\ee} (t) = 0  \  \mbox{for} \ t \leq -\ee,  
\   \chi_{\ee} (t) = \frac{t+\ee}{\ee}   \  \mbox{for} \ t \in (-\ee,0),
\   \chi_{\ee} (t) = 1  \  \mbox{for} \  t  \geq 0.  
\end{align}
Denote $\overline\chi_{\ee}(t) = 1 - \chi_{\ee}(t)$ and note that
\begin{align} \label{bounds-reversedindicators-001} 
\chi_{\ee}  \left( t-\ee \right) \leq  \mathds 1_{(0,\infty)} \left( t \right) \leq \chi_{\ee}  \left( t\right), 
\quad
\overline\chi_{\ee}  \left( t \right) \leq \mathds 1_{(-\infty,0]} \left( t \right) \leq \overline\chi_{\ee}  \left( t-\ee \right).
\end{align}
For any $H \in \mathscr H$ and any compactly supported measurable function $\rho$ on $\bb R$,  we denote
\begin{align}\label{def-H-star-rho}
H * \rho (x, y) = \int_{\bb R} H (x, y -v) \rho(v) dv,  
\quad  x \in \bb S_+^{d-1},  \  y \in \bb R. 
\end{align}

\begin{lemma}\label{Lem_Inequality_Aoverline}
Assume \ref{Condi-AP} and \ref{CondiMoment}. 
Let $\rho$ be a smooth compactly supported function on $\bb R$. 
Then, for any $H \in \mathscr H$, $m \geq 1$ and $\ee >0$,  
the function $L_{m, \ee}$ defined on $\bb S_+^{d-1} \times \bb R$ by 
\begin{align*}
L_{m, \ee}(x, y) :=   \mathbb{E} \left[ H *  \rho \left( X_m^x, y + S_{m}^x \right)   
  \overline\chi_{\ee}  \left( y -\ee + \min_{1 \leq  j \leq m}  S_j^x  \right)  \right],
\end{align*}
belongs to $\mathscr H$. 
Moreover, there exists a constant $c = c(\rho) > 0$ such that 
\begin{align}\label{inequ-convo-funcs}
\|  L_{m, \ee} \|_{\nu \otimes \Leb }  \leq   \int_{\bb R} |\rho(t)| dt  \| H \|_{\nu \otimes \Leb },  
\qquad 
 \| L_{m, \ee} \|_{ \mathscr H}  \leq  \frac{c}{\ee} m^{ \frac{4 + \delta}{ 8 + 4 \delta } }  \| H\|_{\mathscr H},
\end{align}
where $\delta >0$ is from condition \ref{CondiMoment}. 
\end{lemma}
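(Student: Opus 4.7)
The first bound is obtained as follows: using $\overline\chi_\ee \leq 1$ to drop the indicator, Fubini to exchange the $y$-integral with the expectation, the change of variables $u = y + S_m^x$, Young's convolution inequality, and finally the invariance $\mu * \nu = \nu$ applied to $f(x) := \int_\bb R |H(x, w)| dw$ (i.e.\ $\int_{\bb S_+^{d-1}} \bb E[f(X_m^x)] \nu(dx) = \int f \, d\nu$), we obtain $\| L_{m,\ee}\|_{\nu \otimes \Leb} \leq \int_\bb R |\rho(t)| dt \cdot \| H \|_{\nu \otimes \Leb}$.

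For the second bound, split $\| L_{m,\ee}(\cdot, y)\|_\mathscr B$ into its sup-norm and Lipschitz-in-$\bf d$ pieces and integrate in $y$. Introducing the scalar majorant $h_0(u) := \| H(\cdot, u)\|_\mathscr B$, so that $\int h_0 = \| H \|_\mathscr H$ and $|H*\rho(x'', u)| \leq \|H*\rho(\cdot, u)\|_\mathscr B \leq (h_0 * |\rho|)(u)$, the sup-norm piece reduces to $\int \sup_x \bb E[(h_0 * |\rho|)(y + S_m^x)] dy$, which by Lemma \ref{Lem_Integral_LLT} is bounded by $c(\rho) m^{1/4} \| H \|_\mathscr H$ and is absorbed into the target rate since $(4+\delta)/(8+4\delta) > 1/4$.

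For the Lipschitz piece, I use the telescoping decomposition
\begin{align*}
L_{m,\ee}(x,y) - L_{m,\ee}(x',y) = T_1 + T_2 + T_3,
\end{align*}
where $T_1$ captures the change of $H*\rho$ in its first argument, $T_2$ the change in its second argument ($y + S_m^x$ vs $y + S_m^{x'}$), and $T_3$ the change of the indicator $\overline\chi_\ee$. Term $T_1$ is controlled via $|H*\rho(X_m^x, u) - H*\rho(X_m^{x'}, u)| \leq (h_0 * |\rho|)(u) \, \bf d(X_m^x, X_m^{x'})$ together with the deterministic contraction $\bf d(X_m^x, X_m^{x'}) \leq \bf d(x,x')$ coming from \eqref{inequality-contraction}, whose $y$-integrated Lipschitz constant is therefore bounded by an application of Lemma \ref{Lem_Integral_LLT} to $h_0 * |\rho|$. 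Term $T_2$ is handled by differentiating through the convolution (using smoothness of $\rho$) to write the difference as $(S_m^x - S_m^{x'}) \int_0^1 (H*\rho')(X_m^{x'}, y + S_m^{x'} + t(S_m^x - S_m^{x'})) dt$, then applying Cauchy-Schwarz with the second moment bound $\bb E^{1/2}[(S_m^x - S_m^{x'})^2] \leq c \, \bf d(x,x')$ from Lemma \ref{Lem-conti-square} to extract the factor $\bf d(x,x')$, followed by an $L^2$-bound coming from Lemma \ref{Lem_Integral_LLT}. Both $T_1$ and $T_2$ yield contributions of order at most $c(\rho) m^{(4+\delta)/(8+4\delta)} \| H \|_\mathscr H$.

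The main obstacle is term $T_3$. The $1/\ee$-Lipschitz continuity of $\overline\chi_\ee$ gives $|T_3| \leq \ee^{-1} \bb E [|H*\rho(X_m^{x'}, y+S_m^{x'})| \cdot |\min_{1\leq j \leq m} S_j^x - \min_{1\leq j \leq m} S_j^{x'}|]$; bounding the first factor by $(h_0 * |\rho|)(y + S_m^{x'})$, applying Cauchy-Schwarz with the second inequality of Lemma \ref{Lem-conti-square} to extract a factor $c \, \bf d(x,x')$, and finally invoking the second inequality of Lemma \ref{Lem_Integral_LLT} (applied to the scalar $h_0$ with $\rho$ replaced by $|\rho|$) to control $\int \sup_{x'} \bb E^{1/2}[((h_0 * |\rho|)(y + S_m^{x'}))^2] dy \leq c(\rho) m^{(4+\delta)/(8+4\delta)} \int h_0 = c(\rho) m^{(4+\delta)/(8+4\delta)} \| H \|_\mathscr H$, produces the required bound $\frac{c(\rho)}{\ee} m^{(4+\delta)/(8+4\delta)} \| H \|_\mathscr H$. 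The essential difficulty here is twofold: the Lipschitz slope of $\overline\chi_\ee$ forces the $1/\ee$ prefactor, and the coupling of the $\min$-difference with the $y$-shifted integrand through Cauchy-Schwarz necessitates the slower $L^2$-based rate of Lemma \ref{Lem_Integral_LLT} rather than its faster $L^1$-based rate.
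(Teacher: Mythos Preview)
Your proof is essentially the same as the paper's: same first bound via Fubini, the change of variable $y\mapsto y+S_m^x$, and invariance of $\nu$; same sup-norm bound via Lemma~\ref{Lem_Integral_LLT}; and the same three-term telescoping for the Lipschitz piece (your $T_1,T_2,T_3$ correspond to the paper's $I_{m,2},I_{m,1},I_{m,3}$). The paper packages the Cauchy--Schwarz step for $I_{m,1}$ and $I_{m,3}$ into a separate Lemma~\ref{Lem_Integral_LLT-bb}, but the content is exactly what you describe for $T_3$.

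The only point worth flagging is your treatment of $T_2$. After Cauchy--Schwarz you must integrate $\sup_{x,x'}\bb E^{1/2}[(\cdot)^2]$ over $y$, where the inside is evaluated at the interpolated point $y+S_m^{x'}+t(S_m^x-S_m^{x'})$; Lemma~\ref{Lem_Integral_LLT} as stated applies only to arguments of the form $y+S_m^{x}$, so it does not directly cover this. The paper sidesteps the issue by replacing the mean-value integral with the pointwise estimate $|(H*\rho)(x'',u)-(H*\rho)(x'',u')|\le |u-u'|\,(\bar H*\rho_2)(u)$, where $\rho_2(v)=\sup_{|s|\le c_0}|\rho'(v+s)|$ is still compactly supported; this places the second factor at the fixed point $y+S_m^x$, so that Lemma~\ref{Lem_Integral_LLT-bb} applies verbatim. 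Your argument is easily repaired the same way (or by treating the event $\{|S_m^x-S_m^{x'}|>c_0\}$ separately via the crude bound $(h_0*|\rho|)(y+S_m^x)+(h_0*|\rho|)(y+S_m^{x'})$ and the $L^2$ estimate of Lemma~\ref{Lem-conti-square}).
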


\begin{proof}
As $0 \leq \overline\chi_{\ee} \leq 1$, we get
\begin{align*}
\|  L_{m, \ee} \|_{\nu \otimes \Leb }
=  \int_{ \bb S_+^{d-1} \times \bb R} |L_{m, \ee}(x, y) | \nu(dx) dy  
\leq  \int_{ \bb S_+^{d-1} \times \bb R} \mathbb{E} \left[ |H *  \rho| \left( X_m^x, y + S_{m}^x \right)  \right] \nu(dx) dy. 
\end{align*}
By Fubini's theorem, using the change of variable $y + S_m^x = y'$ 
and the stationarity of the invariant measure $\nu$, we have
\begin{align*}
\int_{ \bb S_+^{d-1} \times \bb R} \mathbb{E} \left[ |H *  \rho| \left( X_m^x, y + S_{m}^x \right)  \right] \nu(dx) dy
& =  \mathbb{E} \int_{\bb S_+^{d-1} \times \bb R}   |H *  \rho| \left( X_m^x, y + S_m^x \right)  dy \nu(dx)   \notag\\
& =  \mathbb{E} \int_{\bb S_+^{d-1} \times \bb R}    | H *  \rho |  \left( X_m^x, y' \right) dy' \nu(dx)    \notag\\
& =   \int_{\bb S_+^{d-1} \times \bb R}    | H *  \rho |  \left(x, y' \right) dy' \nu(dx)    \notag\\
& \leq  \int_{\bb R} |\rho(t)| dt  \  \| H \|_{\nu \otimes \Leb },
\end{align*}
where to get the last line we used the bound
\begin{align}\label{convolution-kappa-H}
\int_{\bb R}    | H *  \rho |  \left(x, y' \right) dy' 
&=   \int_{\bb R}  \left|  \int_{\bb R} H(x, y' - t) \rho(t) dt  \right|  dy' 
\leq  \int_{\bb R}  |\rho(t)|  \int_{\bb R}     \left| H(x, y'-t) \right| dy' dt  \notag \\ 
&=  \int_{\bb R} |\rho(t)| dt   \int_{\bb R}  \left| H(x, y'') \right| dy''. 
\end{align}
Putting together the previous bounds proves the first inequality in \eqref{inequ-convo-funcs}.  

Now we proceed to prove the second inequality in \eqref{inequ-convo-funcs}. 
Recall that 
\begin{align} \label{Holder-norm-L-m-ee-abc}
\|  L_{m, \ee} \|_{\mathscr H}
 =  \int_{\bb R}   \sup_{x \in \bb S_+^{d-1}}  | L_{m, \ee} \left(x, y \right)|  dy
 +  \int_{\bb R} \sup_{x, x' \in \bb S_+^{d-1}: x \neq x'}  
    \frac{| L_{m, \ee} \left(x, y \right)  - L_{m, \ee} \left(x', y \right)  |}{ \bf d(x, x')} dy. 
\end{align}
Set 
\begin{align*}
\bar H(y) = \sup_{x \in \bb S_+^{d-1}} |H(x, y)|, \quad  y \in \bb R. 
\end{align*}
Utilizing again the fact that $0 \leq \overline\chi_{\ee} \leq 1$, we have, for any $x \in \bb S_+^{d-1}$ and $y \in \bb R$, 
\begin{align*}
 | L_{m, \ee} \left(x, y \right) | 
& \leq  \bb E \left[ |\bar H * \rho| \left( y + S_{m}^x \right)  \right].  
\end{align*}
By integrating over $y$ and applying Lemma \ref{Lem_Integral_LLT}, 
we get that there exists a constant $c = c(\rho) > 0$ such that for any $m \geq 1$ and $\ee >0$, 
\begin{align}\label{Inequa-L001}      
\int_{\bb R}   \sup_{x \in \bb S_+^{d-1}}  | L_{m, \ee} \left(x, y \right)|  dy
 \leq   \int_{\bb R}  \sup_{x \in \bb S_+^{d-1}}  \bb E \left[ |\bar H *  \rho| \left( y + S_{m}^x \right)  \right]  dy 
 \leq  c m^{1/4} \int_{\bb R} \bar H(y) dy. 
\end{align}

Now we dominate the second term in \eqref{Holder-norm-L-m-ee-abc} and start with the following decomposition: 
\begin{align*}
| L_{m, \ee} \left(x, y \right)  - L_{m, \ee} \left(x', y \right)  | 
\leq   I_{m,1}(x,x',y) +  I_{m,2}(x,x',y) +  I_{m,3}(x,x',y), 
\end{align*}
where 
\begin{align*}
I_{m,1}(x, x', y) 
& =  \left| L_{m, \ee} \left(x, y \right) -  \mathbb{E} \left[ H *  \rho \left( X_m^x, y + S_{m}^{x'} \right)  
  \overline\chi_{\ee}  \left( y -\ee + \min_{1 \leq  j \leq m}  S_j^x  \right)  \right]  \right|,    \notag\\
I_{m,2}(x, x', y) & =  \left|   
    \mathbb{E}   \left[ H *  \rho \left( X_m^x, y + S_{m}^{x} \right)  -  H *  \rho \left( X_m^{x'}, t+S_{m}^{x} \right)    \right]
   \overline\chi_{\ee}  \left( y -\ee + \min_{1 \leq  j \leq m}  S_j^x  \right)    \right|,    \notag\\
I_{m,3}(x, x', y) & =  \left|   \bb E   
     H *  \rho \left( X_m^x, y + S_{m}^{x} \right) 
 \left[ \overline\chi_{\ee}  \left( y -\ee + \min_{1 \leq  j \leq m}  S_j^x \right) 
  -  \overline\chi_{\ee}  \left( y -\ee + \min_{1 \leq  j \leq m}  S_j^{x'} \right)  \right]    \right|.  
\end{align*}

\textit{Bound of $I_{m,1}(x, x', y)$.}
Set $\rho_2(y) = \sup_{s \in \bb R: |s| \leq c_0} |\rho'(y + s)|$ for $y \in \bb R$. 
Since $\rho'$ is supported on $[- c_0, c_0]$, 
the function $\rho_2$ is supported on $[- 2c_0, 2c_0]$. 
Hence, for any $x \in \bb S_+^{d-1}$ and $y, y' \in \bb R$, 
\begin{align*}
| H * \rho(x, y) - H * \rho(x, y') | \leq  |y - y'| \bar H * \rho_2(y). 
\end{align*}
It follows that for $x, x' \in \bb S_+^{d-1}$ and  $y \in \bb R$,  
\begin{align*}
I_{m,1}(x, x', y) 
 & \leq  \bigg| \mathbb{E} \Big[ H *  \rho \Big( X_m^x, y + S_{m}^{x} \Big)   \Big]
                    -  \mathbb{E} \left[ H *  \rho \left( X_m^x, y + S_{m}^{x'} \right)   \right]  \bigg|  \notag\\
 & \leq   \bb E \left[  \left| S_{m}^{x} - S_{m}^{x'}  \right|  \bar H * \rho_2 \left( y + S_{m}^x  \right)  \right].
\end{align*}
By Lemma \ref{Lem_Integral_LLT-bb}, there exists a constant $c = c(\rho_2) >0$ such that for any $m \geq 1$, 
\begin{align}\label{Bound_I1zzt}
\int_{\bb R}  \sup_{x, x' \in \bb S_+^{d-1}: x \neq x'} 
\frac{I_{m,1}(x, x', y) }{ \mathbf d (x, x') }  dy
\leq   c  m^{ \frac{4 + \delta}{ 8 + 4 \delta } }  \int_{\bb R} |\bar H(u)| du,  
\end{align}
where $\delta >0$ is from condition \ref{CondiMoment}.

\textit{Bound of $I_{m,2}(x, x', y)$.}
Since $H \in \mathscr H$, the function $L(y) = \sup_{x, x' \in \bb S_+^{d-1}: x \neq x'}  \frac{|H(x, y) - H(x', y)|}{ \bf d(x, x') }$
is integrable on $\bb R$. Hence, for $x, x' \in \bb S_+^{d-1}$, $y \in \bb R$ and $m \geq 1$, 
\begin{align*}
I_{m,2}(x, x', y) 
& \leq   \bb{E}   \left| H *  \rho \left( X_m^x, y + S_{m}^{x} \right)  -  H *  \rho \left( X_m^{x'}, y + S_{m}^{x} \right)  \right|  \notag\\ 
 & \leq   \bb E \left[  \bf d(X_m^x, X_m^{x'})  L * \rho \left(  y + S_{m}^{x} \right)  \right]    \notag\\
 & \leq    \bf d(x, x')  \bb E \left[ L * \rho \left(  y + S_{m}^{x} \right)  \right], 
\end{align*}
where in the last inequality we used \eqref{inequality-contraction}. 
Applying Lemma \ref{Lem_Integral_LLT}, we obtain that there exists a constant $c = c(\rho) >0$ such that for any $m \geq 1$, 
\begin{align}\label{Bound_I2zzt}
\int_{\bb R}  \sup_{x, x' \in \bb S_+^{d-1}: x \neq x'}  \frac{I_{m,2}(x, x', y) }{ \mathbf d (x, x') }  dy 
& \leq  c m^{1/4}  \int_{\bb R} |L(y)| dy \notag\\
& = c m^{1/4}  \int_{\bb R}  \sup_{x, x' \in \bb S_+^{d-1}: x \neq x'}  \frac{|H(x, y) - H(x', y)|}{ \bf d(x, x') } dy. 
\end{align}

\textit{Bound of $I_{m,3}(x, x', y)$.}
Since $\overline\chi_{\ee}$ is $1/\ee$-Lipschitz continuous on $\bb R$, we have
\begin{align}\label{bound-I3-xxy-abc}
 I_{m,3}(x, x', y)  
 & \leq  \frac{1}{\ee} \bb E    \left[
     H *  \rho \left( X_m^x, y + S_{m}^{x} \right) 
  \left| \min_{1 \leq  j \leq m}  S_j^x  - \min_{1 \leq  j \leq m}  S_j^{x'} \right|   \right]  \notag\\
  & \leq \frac{1}{\ee} \bb E    \left[
     \bar H *  \rho \left( y + S_{m}^{x} \right) 
  \left| \min_{1 \leq  j \leq m}  S_j^x  - \min_{1 \leq  j \leq m}  S_j^{x'} \right|   \right]. 
 \end{align}
 By Lemma \ref{Lem_Integral_LLT-bb}, there exists a constant $c = c(\rho) >0$ such that for any $m \geq 1$, 
\begin{align}\label{Bound_I3zzt}
\int_{\bb R}  \sup_{x, x' \in \bb S_+^{d-1}: x \neq x'}  \frac{I_{m,3}(x, x', y) }{ \mathbf d (x, x') }  dy 
 \leq  \frac{c}{\ee} m^{ \frac{4 + \delta}{ 8 + 4 \delta } }  \int_{\bb R}  \sup_{x, x' \in \bb S_+^{d-1}: x \neq x'}  \frac{|H(x, y) - H(x', y)|}{ \bf d(x, x') } dy. 
\end{align}
Putting together \eqref{Bound_I1zzt}, \eqref{Bound_I2zzt} and \eqref{Bound_I3zzt},
and integrating over $y \in \bb R$, yields the second inequality in \eqref{inequ-convo-funcs}.  
\end{proof}


\section{Proof of Theorems \ref{Thm-nodrift-ysmall} and \ref{Thm-nodrift-ysmall-target}} \label{sec-proof Th CondLLT Version1}

\subsection{A non-asymptotic conditioned local limit theorem}

In this section we state the following effective upper and lower bounds in the 
 conditioned local limit theorem of Caravenna-type for products of positive random matrices, 
 from which we shall deduce Theorems \ref{Thm-nodrift-ysmall} and \ref{Thm-nodrift-ysmall-target}. 
Our result is formulated with a target function $F$ on the pair $(X_n^x, S_n^x)$, 
which will also be applied to obtain Theorem \ref{Thm-CLLT-small-x}.

\begin{theorem} \label{t-B 002}
Assume \ref{Condi-AP}, \ref{CondiMoment},  \ref{Condi-Lya} and \ref{CondiNonarith}. 
Let $\delta \in (0, 1]$ be from \ref{CondiMoment}.  
Then, one can find constants $\beta_0, c >0$ and a sequence $(\ee_n)_{n \geq 1}$ of positive numbers satisfying
$\lim_{n\to\infty}\ee_n = 0$ such that the following holds. 
For any $\beta \in (0, \beta_0)$, there exists a constant $c_{\beta}>0$ such that
 for all $n \geq 1$, $x \in \bb S_+^{d-1}$ and $y \in \bb R$, 
\begin{enumerate}[label=\arabic*., leftmargin=*]
\item  For any  nonnegative function $F$ 
and any function $H \in \mathscr H$ satisfying $F \leq_{\ee_n} H$,  
$F(x, y)=0$ for $x \in \bb S_+^{d-1}$ and $y <0$, and $H(x, y)=0$ for $x \in \bb S_+^{d-1}$ and $y<-\ee_n$,      
\begin{align}\label{eqt-A 001}
& \bb E \Big[ F(X_n^x, y + S_n^x);  \tau_{x, y} > n  \Big]   \notag\\
&  \leq  (1 + c \ee_n) \frac{ V_n(x, y) }{\sigma^2 n}
 \int_{\bb S_+^{d-1} \times \bb R_+}    H \left(x, z \right)  
   \ell \left(\frac{y}{\sigma\sqrt{n}} ,\frac{z}{\sigma\sqrt{n}}\right) \nu(dx)  dz  \notag\\
& \quad  +  c_{\beta}  \frac{1 + V_n(x, y) }{ n^{1 + \beta} } \| H \|_{\nu \otimes \Leb} 
+  c_{\beta} \frac{1 + V_n(x, y) }{ n^{1 + \frac{\delta}{2} - \beta} }  \| H \|_{\mathscr H}.  
\end{align}
\item For any nonnegative  function $F$ and nonnegative  functions $H, M \in \mathscr H$
satisfying $M \leq_{\ee_n} F \leq_{\ee_n} H$, 
$F(x, y)=0$ for $x \in \bb S_+^{d-1}$ and $y <0$,  $H(x, y)=0$ for $x \in \bb S_+^{d-1}$ and $y<- \ee_n$,   
and $M(x, y)=0$ for $x \in \bb S_+^{d-1}$ and $y< \ee_n$,   
\begin{align} \label{eqt-A 002}
&  \bb E \Big[ F(X_n^x, y + S_n^x);  \tau_{x, y} > n  \Big]   \notag\\
&  \geq   \frac{ V_n(x, y) }{\sigma^2 n}
 \int_{\bb S_+^{d-1} \times \bb R_+}    (M \left(x, z \right) - c \ee_n H \left(x, z \right) )
   \ell \left(\frac{y}{\sigma\sqrt{n}} ,\frac{z}{\sigma\sqrt{n}}\right) \nu(dx)  dz    \notag\\
&\quad  -  c_{\beta}  \frac{1 + V_n(x, y)}{n^{1 + \beta}}   \| H \|_{\nu \otimes \Leb }    
   -   c_{\beta} \frac{1+ V_n(x, y)}{ n^{1 +  \delta_1} } 
\left( \left\Vert  H  \right\Vert _{\mathscr H} +  \left\Vert  M  \right\Vert _{\mathscr H}  \right), 
\end{align}
where $\delta_1 = \frac{ 2 \delta^2 + 3 \delta - 4 }{4(2 + \delta)} - \beta$.
Moreover, under the Furstenberg-Kesten condition \ref{Condi-FK} (in place of \ref{Condi-AP}), 
one can take $\delta_1 = \frac{\delta}{2}$ in \eqref{eqt-A 002}. 
\end{enumerate}
\end{theorem}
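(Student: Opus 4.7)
The plan is to reduce the conditioned local limit theorem to a combination of the unconditioned local limit theorem (Theorem \ref{LLT-general}) and the conditioned central limit theorem (Theorem \ref{Theor-probIntUN-002}), by means of a Markov splitting at an intermediate time $m = \lfloor n^{1-\alpha} \rfloor$ for some small $\alpha \in (0,1)$ to be tuned. This avoids Wiener--Hopf factorization and follows the heat-kernel approach of \cite{GX25, GQX21}.

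First I would apply the Markov property at time $n-m$ to rewrite
\begin{align*}
\bb E \bigl[ F(X_n^x, y + S_n^x);\, \tau_{x,y} > n \bigr]
= \bb E \bigl[ G_m(X_{n-m}^x, y + S_{n-m}^x);\, \tau_{x,y} > n-m \bigr],
\end{align*}
where $G_m(x',y') = \bb E \bigl[ F(X_m^{x'}, y' + S_m^{x'});\, \min_{1 \leq j \leq m}(y' + S_j^{x'}) \geq 0 \bigr]$. Next I would smooth the indicator of the positivity constraint by inserting the Lipschitz functions $\chi_{\ee_n}$ and $\overline\chi_{\ee_n}$ from \eqref{bounds-reversedindicators-001} with $\ee_n \to 0$ a polynomially small parameter; writing $\chi_{\ee_n} = 1 - \overline\chi_{\ee_n}$ splits $G_m$ into a free part (in which the positivity constraint is removed) and a defect part. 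Theorem \ref{LLT-general} handles the free part, yielding the Gaussian convolution
\begin{align*}
\frac{1}{\sigma \sqrt{m}} \int_{\bb S_+^{d-1} \times \bb R} H(x'', y'') \, \phi\!\left( \frac{y'' - y'}{\sigma \sqrt m} \right) \nu(dx'') dy''
\end{align*}
up to remainders controlled by $\| H \|_{\nu \otimes \Leb}$ and $\| H \|_{\mathscr H}$, while the defect part corresponds exactly to the function $L_{m,\ee_n}$ of Lemma \ref{Lem_Inequality_Aoverline}, whose $\nu \otimes \Leb$ and $\mathscr H$ norms are controlled there.

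Substituting back and exchanging orders of integration, the main contribution reduces to
\begin{align*}
\int_{\bb S_+^{d-1} \times \bb R} H(x'', y'') \, \bb E\!\left[ \frac{1}{\sigma \sqrt m} \phi\!\left( \frac{y'' - y - S_{n-m}^x}{\sigma \sqrt m} \right);\, \tau_{x,y} > n - m \right] \nu(dx'') dy''.
\end{align*}
I would then invoke Theorem \ref{Theor-probIntUN-002}, combined with the heat-semigroup identity which transports $\psi$ along Gaussian convolutions (the key analytic property of the density defined in \eqref{Def-Levydens}), to evaluate this inner expectation as $\frac{V_n(x,y)}{\sigma^2 n}\, \ell\!\left( \frac{y}{\sigma \sqrt n}, \frac{y''}{\sigma \sqrt n} \right)$ with negligible error, since $n - m \sim n$ and $V_{n-m} \sim V_n$. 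Integrating against $H(x'', y'') \nu(dx'')dy''$ produces the main term of \eqref{eqt-A 001}. For the lower bound \eqref{eqt-A 002} I would bound $\mathds 1_{\min \geq 0}$ from below by $\overline\chi_{\ee_n}(\,\cdot\, - \ee_n)$ in \eqref{bounds-reversedindicators-001}, apply the same splitting, and use the sandwich $M \leq_{\ee_n} F \leq_{\ee_n} H$; the subtractive $c\ee_n H$ correction in the stated main integrand arises precisely from this $\ee_n$-band mismatch between $F$ and $M$.

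The delicate part is the simultaneous balancing of $(m, \ee_n, \alpha, \beta)$. The smoothing defect from Lemma \ref{Lem_Inequality_Aoverline} contributes an error of order $\ee_n^{-1} m^{(4+\delta)/(8+4\delta) - (1+\delta)/2} \| H \|_{\mathscr H}$, while the local limit theorem residues scale as $\ee_n m^{-1} \| H \|_{\nu \otimes \Leb}$ plus $m^{-(1+\delta)/2} \| H \|_{\mathscr H}$, and the conditioned CLT residue is of order $n^{-1/2-\eta}$. Choosing $m = n^{1-\alpha}$ and $\ee_n = n^{-\gamma}$ for appropriate small $\alpha, \gamma > 0$ merges these into the claimed rates $n^{-1-\beta}$ (against the $\nu \otimes \Leb$ norm) and $n^{-1 - \delta_1}$ (against the $\mathscr H$ norm), with $\delta_1 = \frac{2\delta^2 + 3\delta - 4}{4(2+\delta)} - \beta$. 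The main obstacle is the poor dependence of the $\mathscr H$-norm defect on $\ee_n^{-1}$, forcing $\ee_n$ to be not too small and hence restricting $\delta_1$; under the Furstenberg--Kesten condition \ref{Condi-FK}, the approximate duality relation \eqref{intro-duality} gives a sharper direct tail bound on $\bb P(\min_{1 \leq j \leq m}(y' + S_j^{x'}) < \ee_n)$ for moderate $y'$, which bypasses Lemma \ref{Lem_Inequality_Aoverline} entirely and raises $\delta_1$ to the expected $\delta/2$.
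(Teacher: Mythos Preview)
Your overall architecture matches the paper's: Markov split at $m=\lfloor n^{1-\alpha}\rfloor$, apply Theorem~\ref{LLT-general} to the inner (length-$m$) segment, feed the resulting Gaussian into Theorem~\ref{Theor-probIntUN-002} for the outer (length $n-m$) segment, and use Lemma~\ref{convol-phi-psi-001} to collapse $\phi_{m/n}*\ell^+_{1-m/n}$ into $\ell$. For the upper bound this is exactly what the paper does (one simply drops the inner positivity constraint rather than smoothing it).

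There is, however, a real gap in your treatment of the defect term for the lower bound. You say the defect ``corresponds exactly to $L_{m,\ee_n}$ of Lemma~\ref{Lem_Inequality_Aoverline}, whose $\nu\otimes\Leb$ and $\mathscr H$ norms are controlled there,'' and then tally only the $\mathscr H$-norm contribution in your balancing. But $\|L_{m,\ee}\|_{\nu\otimes\Leb}\leq\|H\|_{\nu\otimes\Leb}$ is \emph{not} small, so when you apply the already-proved upper bound to the defect $I_{n,2}$ you pick up a leading term
\[
\frac{V_n(x,y)}{\sigma^2 n}\int_{\bb S_+^{d-1}\times\bb R_+} H_m(x',z)\,\ell\!\left(\frac{y}{\sigma\sqrt{n}},\frac{z}{\sigma\sqrt{n}}\right)\nu(dx')\,dz
\]
that is a priori of the same order $n^{-1}V_n\|H\|_{\nu\otimes\Leb}$ as the main term you want to isolate. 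The paper kills this in \eqref{decom-integral-gm-ell}--\eqref{Bound_J1} by a further split on $z$: for $z\leq n^{1/2-3\beta/8}$ one uses the pointwise LLT bound $H_m\leq c_\beta n^{-1/2+\beta/2}\|H\|_{\nu\otimes\Leb}+\ldots$ together with $\ell(\cdot,t)=O(t)$ near $t=0$; for $z>n^{1/2-3\beta/8}$ one observes that $\overline\chi_\ee$ forces $\min_{1\leq j\leq m}S_j^{x'}\lesssim -z$, and then invokes the Fuk-type inequality \eqref{appli-Fuk-Nagaev-001} (ultimately Lemma~\ref{FK-joint-inequality}) to make this probability $O(n^{-\beta/8})$. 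This small-$z$/large-$z$ dichotomy is the missing mechanism; without it the defect does not become negligible and the lower bound does not close.

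A smaller point: your explanation of the Furstenberg--Kesten improvement is off. Under \ref{Condi-FK} the gain to $\delta_1=\delta/2$ comes not from the duality relation \eqref{intro-duality} but from the almost-sure contraction of Lemma~\ref{Lem-contractivity}, which upgrades the $L^2$ bounds in Lemma~\ref{Lem-conti-square} to pointwise ones and thereby removes the factor $m^{(4+\delta)/(8+4\delta)}$ in Lemma~\ref{Lem_Inequality_Aoverline}. The duality relation is used elsewhere (for Theorem~\ref{Thm-CLLT-small-x}).
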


In particular, taking $F = \mathds 1_{[z, z + \Delta]}$ in \eqref{eqt-A 001}
and using the fact that the function $\ell$ is bounded, we get the following corollary. 

\begin{corollary}\label{Cor-Caravenna-001}
Assume \ref{Condi-AP}, \ref{CondiMoment},  \ref{Condi-Lya} and \ref{CondiNonarith}. 
Then there exists a constant $c>0$ such that for any $n \geq 1$, $x \in \bb S_+^{d-1}$, $y \in \bb R$, 
$z \in \bb R$ and $\Delta \in \bb R_+$, 
\begin{align*}
\bb P \left( y + S_n^x \in  [z, z + \Delta],   \tau_{x, y} > n   \right)
\leq \frac{c}{n} \left( 1 + V_n(x, y) \right) \left( 1 + \Delta \right). 
\end{align*}
\end{corollary}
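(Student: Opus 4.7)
The plan is to deduce this corollary as a direct consequence of the upper bound \eqref{eqt-A 001} in Theorem \ref{t-B 002}, applied with the indicator target function $F(x', y') = \mathds 1_{[z, z + \Delta]}(y')$. First, since on the event $\{\tau_{x, y} > n\}$ one has $y + S_n^x \geq 0$, we may assume without loss of generality that $z \geq 0$: otherwise we replace the interval $[z, z+\Delta]$ by $[z, z+\Delta] \cap \bb R_+$, which has length at most $\Delta$ and leaves the probability unchanged. After this reduction, $F$ satisfies the required support condition $F(x', y') = 0$ for $y' < 0$.

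Next, to verify the hypotheses of Theorem \ref{t-B 002}, I would choose the dominating function $H \in \mathscr{H}$ to be the slightly fattened indicator
\[
H(x', y') = \mathds 1_{[z - \ee_n, z + \Delta + \ee_n]}(y'),
\]
where $(\ee_n)_{n\geq 1}$ is the vanishing sequence provided by Theorem \ref{t-B 002}. Since $H$ is independent of $x'$, one has $\|H(\cdot, y')\|_{\mathscr B} = \mathds 1_{[z - \ee_n, z + \Delta + \ee_n]}(y')$, giving
\[
\|H\|_{\nu \otimes \Leb} = \|H\|_{\mathscr H} = \Delta + 2\ee_n.
\]
The $\ee_n$-domination $F \leq_{\ee_n} H$ follows immediately from the inclusion $[z, z + \Delta] + [-\ee_n, \ee_n] \subset [z - \ee_n, z + \Delta + \ee_n]$, while the support condition $H(x', y') = 0$ for $y' < -\ee_n$ is ensured by $z \geq 0$.

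Applying \eqref{eqt-A 001} with these choices, I would bound the main term using the uniform boundedness of the function $\ell$ on $\bb R \times \bb R_+$; this boundedness follows from the explicit formula \eqref{Def-Levydens}--\eqref{def func L-001}, the pointwise limit \eqref{limit of func ell-001}, and the behavior of $H(y) = 2\Phi(y) - 1$ as $|y| \to \infty$. The main term is then bounded by $\frac{c V_n(x,y)}{\sigma^2 n} (\Delta + 2\ee_n)$, while the two remainder terms together contribute at most $c_\beta (1 + V_n(x, y)) n^{-1-\beta'} (\Delta + 2\ee_n)$ for some $\beta' > 0$. Since $\ee_n \to 0$, we have $\Delta + 2\ee_n \leq c(1 + \Delta)$ uniformly in $n$, so collecting the bounds yields the required estimate $\frac{c}{n}(1 + V_n(x, y))(1 + \Delta)$ for all sufficiently large $n$; the finitely many remaining small values of $n$ are absorbed into the constant via the trivial bound $\bb P(\cdots) \leq 1$. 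I do not anticipate any genuine obstacle, as the argument is essentially a bookkeeping step on top of Theorem \ref{t-B 002}.
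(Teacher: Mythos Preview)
Your proposal is correct and follows exactly the approach sketched in the paper: apply the upper bound \eqref{eqt-A 001} with $F=\mathds 1_{[z,z+\Delta]}$ and an $\ee_n$-fattened indicator $H$, then use the boundedness of $\ell$ to control the main term and absorb $\Delta+2\ee_n\leq c(1+\Delta)$. One small remark: since Theorem~\ref{t-B 002} already holds for all $n\geq 1$ and $(\ee_n)$ is a bounded sequence, your argument in fact covers every $n$ directly, so the separate ``small $n$ via the trivial bound $\bb P\leq 1$'' step is unnecessary (and would not quite work when $\Delta=0$ and $V_n(x,y)$ is small).
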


Let $\phi_{v}$ be the normal density of variance $v > 0$:  $\phi_{v} (t) = \frac{1}{\sqrt{v}} \phi \left( \frac{t}{\sqrt{v}} \right)$,
$t \in \bb R$. 
As in \eqref{Def-Levydens}, the scaled heat kernel $\psi_v$ with scale parameter $\sqrt{v}$ is defined as
\begin{align} \label{Rayleigh dens with scale v 001}
\psi_{v}(x, y) 
= \frac{1}{\sqrt{v}} \psi \left(\frac{x}{\sqrt{v}}, \frac{y}{\sqrt{v}}\right)
= \phi_{v}(x-y) - \phi_{v}(x+y), \quad x,y\in \bb R. 
\end{align}
We denote, for $v > 0$,  
\begin{align} \label{new-density-scale-v}
\ell_{v}(x, y) = \frac{\psi_{v}(x, y)}{H(x)},  \quad x, y \in \bb R. 
\end{align}
For $x, y \in \bb R$, let 
\begin{align} \label{def-ell-H-plus-minus}
\ell_{v}^+(x,y)= \ell_{v}(x,y) \mathds 1_{\{ y > 0 \}} 
\qquad  \mbox{and}  \qquad
\ell_{v}^-(x,y)= \ell_{v}(x,y) \mathds 1_{\{ y < 0 \}}. 
\end{align}
We define, for any $x, y \in \bb R$, 
\begin{align} 
& \phi_{v} * \ell_{1-v}^+(x, y) := \int_{0}^{\infty} \phi_{v} (y-z) \ell^+_{1-v}(x, z) dz,  \label{def-convo-ell-H-plus} \\
& \phi_{v} * \ell_{1-v}^-(x, y) := \int_{-\infty}^0  \phi_{v} (y-z) \ell^-_{1-v}(x, z) dz.  \label{def-convo-ell-H-minus}
\end{align}

The following lemmas are from \cite[Lemmas 5.3 and 5.10]{GX25}. 

\begin{lemma} \label{convol-phi-psi-001} 
For any $v \in (0,1)$ and $x, y \in \bb R$, we have
\begin{align} \label{ConvoNormalLevy02}
\phi_{v} * \ell^+_{1-v}(x,y) +  \phi_{v} * \ell^-_{1-v}(x, y)  =  \ell (x, y). 
\end{align} 
\end{lemma}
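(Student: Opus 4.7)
The plan is to collapse the two half-line convolutions into a single convolution over all of $\bb R$, then split the resulting integrand via $\psi_{1-v}(x,z) = \phi_{1-v}(x-z) - \phi_{1-v}(x+z)$ and reduce everything to the classical semigroup identity for Gaussian densities $\phi_v * \phi_{1-v} = \phi_1 = \phi$.

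More precisely, since $\psi_{1-v}(x,\cdot)$ is an odd function on $\bb R$ (as $\phi_{1-v}(x-z) - \phi_{1-v}(x+z)$ flips sign under $z \mapsto -z$), the function $\ell_{1-v}(x, \cdot)$ is odd as well. Consequently, with the convention that $\ell_{1-v}(x,0)=0$, the decomposition
\begin{equation*}
\ell_{1-v}(x,z) = \ell_{1-v}^+(x,z) + \ell_{1-v}^-(x,z)
\end{equation*}
holds for Lebesgue almost every $z\in \bb R$. Adding the two integrals \eqref{def-convo-ell-H-plus}--\eqref{def-convo-ell-H-minus} and substituting the definition \eqref{new-density-scale-v} then gives
\begin{equation*}
\phi_v*\ell_{1-v}^+(x,y) + \phi_v*\ell_{1-v}^-(x,y)
= \frac{1}{H(x)} \int_{\bb R} \phi_v(y-z)\bigl[\phi_{1-v}(x-z) - \phi_{1-v}(x+z)\bigr] dz.
\end{equation*}

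The next step is to evaluate the two resulting convolution integrals. For the first, the change of variable $w = y-z$ and the evenness of $\phi_{1-v}$ yield
\begin{equation*}
\int_{\bb R} \phi_v(y-z)\phi_{1-v}(x-z)\, dz = (\phi_v * \phi_{1-v})(x-y) = \phi(x-y),
\end{equation*}
and for the second, the same substitution gives
\begin{equation*}
\int_{\bb R} \phi_v(y-z)\phi_{1-v}(x+z)\, dz = (\phi_v * \phi_{1-v})(x+y) = \phi(x+y).
\end{equation*}
Subtracting, the integrand becomes $\phi(x-y) - \phi(x+y)$, which by the very definition \eqref{Def-Levydens} equals $\psi(x,y)$. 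Dividing by $H(x)$ and invoking \eqref{def func L-001} gives $\ell(x,y)$, finishing the identity \eqref{ConvoNormalLevy02}.

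There is no real obstacle here: the only subtlety is verifying the oddness of $\psi_{1-v}(x,\cdot)$ in order to justify recombining the two half-integrals into a single one, and carefully bookkeeping the Gaussian semigroup identity $\phi_v * \phi_{1-v} = \phi$. Both steps are routine, and no input from the Markov chain structure is needed since this is purely an identity about the heat kernel on the half-line.
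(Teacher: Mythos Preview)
Your proof is correct. The paper does not give its own proof of this lemma but simply cites \cite[Lemma 5.3]{GX25}, so there is nothing to compare against; your argument via the Gaussian semigroup identity $\phi_v * \phi_{1-v} = \phi$ is exactly the natural one. One small remark: the oddness of $\psi_{1-v}(x,\cdot)$ is not actually needed to recombine the two half-integrals, since by definition $\ell_{1-v}^+ + \ell_{1-v}^- = \ell_{1-v}\mathds 1_{\{z\neq 0\}}$, which already agrees with $\ell_{1-v}$ Lebesgue-a.e.\ regardless of any symmetry; but this does not affect the validity of your argument.
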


\begin{lemma} \label{Lipschiz  property for ell_H in y} 
There exists a constant $c > 0$ such that, for any $x, y, a \in \bb R$, 
 \begin{align} \label{ell-intergr-bound-002}
\left| \ell(x, y+a)  -   \ell(x, y) \right|  \leq c |a|. 
\end{align}
\end{lemma}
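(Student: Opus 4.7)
The plan is to reduce the Lipschitz bound to a pointwise estimate on $\partial_y \ell(x,y)$ via the mean value theorem, so the goal becomes:
\[
\sup_{(x,y)\in \bb R^2} \left| \partial_y \ell(x,y) \right| < \infty,
\]
where $\ell(x,y) = \psi(x,y)/H(x)$, $\psi(x,y)=\frac{1}{\sqrt{2\pi}}\left(e^{-(x-y)^2/2}-e^{-(x+y)^2/2}\right)$ and $H(x) = 2\Phi(x)-1$. I would first compute directly
\[
\partial_y \psi(x,y) = \frac{1}{\sqrt{2\pi}}\bigl[(x-y)e^{-(x-y)^2/2} + (x+y)e^{-(x+y)^2/2}\bigr].
\]
Since both $\psi(\cdot,y)$ and $H(\cdot)$ are odd, $\ell$ is even in $x$, so I may restrict to $x \geq 0$.

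Next, I would split into the non-degenerate regime $x \geq 1$ and the delicate regime $0 \leq x \leq 1$. For $x \geq 1$, use the elementary bound $|\xi e^{-\xi^2/2}| \leq e^{-1/2}$ to conclude that $|\partial_y\psi(x,y)|$ is bounded by an absolute constant, while $H(x) \geq H(1)>0$; the ratio is therefore bounded.

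For $0 \leq x \leq 1$ the difficulty is the apparent $0/0$ indeterminacy: both $\partial_y\psi(x,y)\to 0$ and $H(x)\to 0$ as $x\to 0$. The key observation is that $\partial_y \psi(0,y) = -y e^{-y^2/2} + y e^{-y^2/2} = 0$ identically in $y$, so by the fundamental theorem of calculus $\partial_y\psi(x,y) = \int_0^x \partial_x\partial_y\psi(s,y)\,ds$. A direct computation shows that $\partial_x\partial_y\psi(s,y)$ is a sum of terms of the form $(1-\xi^2)e^{-\xi^2/2}$ evaluated at $\xi = s\pm y$, hence uniformly bounded in $(s,y)\in\bb R^2$. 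This yields $|\partial_y\psi(x,y)| \leq Cx$ uniformly in $y$. Matching this with the trivial lower bound $H(x) \geq \tfrac{2}{\sqrt{2\pi}}e^{-1/2}\, x$ for $x\in[0,1]$, the linear factors in $x$ cancel and the ratio is bounded.

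Combining the two regimes produces a uniform constant $c$ with $\sup|\partial_y \ell(x,y)|\leq c$, and a single application of the mean value theorem in $y$ delivers \eqref{ell-intergr-bound-002}. I expect the main (though still mild) obstacle to be the estimate near $x=0$; the crux is the observation $\partial_y\psi(0,y)\equiv 0$, which forces the vanishing of the numerator to match that of $H(x)$ and removes the singularity.
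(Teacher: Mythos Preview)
Your argument is correct. The reduction to a uniform bound on $\partial_y\ell$ via the mean value theorem is sound, the computation $\partial_y\psi(0,y)\equiv 0$ is right, and your two-regime estimate (with the linear-in-$x$ cancellation near $x=0$ against the lower bound $H(x)\geq \tfrac{2}{\sqrt{2\pi}}e^{-1/2}x$) goes through cleanly. One small point you might make explicit: the mean value theorem is applied for $x\neq 0$, where $\ell(x,\cdot)$ is smooth; the bound at $x=0$ then follows by continuity of $\ell$ in $x$, which the paper establishes just above in \eqref{limit of func ell-001}.

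As for comparison with the paper: the paper does not supply its own proof of this lemma but simply cites it from \cite[Lemma 5.10]{GX25}. Your self-contained derivative argument therefore adds value by making the result verifiable in place, and the key observation --- that the vanishing of $\partial_y\psi$ at $x=0$ matches the simple zero of $H$ --- is exactly the right mechanism.
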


\subsection{Proof of the upper bound} 
In this section we prove the upper bound \eqref{eqt-A 001} of Theorem \ref{t-B 002}.
Set 
$m=[ n^{1- \beta}]$ and $k = n-m$, where $\beta \in (0,\frac{1}{2}]$ will be chosen to be sufficiently small later.  
For short, we denote for any $(x, y) \in \bb S_+^{d-1} \times \mathbb R$ and $n \geq 1$, 
\begin{align*}
I_n(x, y) : = \bb E \Big[ F(X_n^x, y + S_n^x);  \tau_{x, y} > n  \Big]. 
\end{align*}
By the Markov property of $(X_n^x, S_n^x)$, 
it holds that for any $(x, y) \in \bb S_+^{d-1} \times \mathbb R$ and $n \geq 1$, 
\begin{align} \label{JJJ-markov property}
I_n(x, y) 
& = \int_{\bb S_+^{d-1} \times \mathbb R_+}  I_m(x', y') \bb P \left(X_k^x \in dx', y + S_k^x \in dy',  \tau_{x, y} > k \right)   \notag\\
& \leq \int_{\bb S_+^{d-1} \times \mathbb R_+}  
\mathbb E  \Big[ F(X_m^{x'}, y' + S_{m}^{x'} ) \Big]  
   \bb P \left(X_k^x \in dx', y + S_k^x \in dy', \tau_{x, y} > k \right). 
\end{align}
Using the effective local limit theorem (\eqref{LLT-general001} of Theorem \ref{LLT-general}), 
with $\delta \in (0, 1]$ from \ref{CondiMoment}, 
one can find a constant $c > 0$ such that the following holds:
for any $\ee \in (0, \frac{1}{8})$, there exist constants $c_{\ee}, c_{\beta} >0$ such that for any $(x', y') \in \bb S_+^{d-1} \times \bb R_+$, 
\begin{align} \label{JJJJJ-1111-001}
 \mathbb E  \Big[ F \left( X_m^{x'}, y' + S_{m}^{x'} \right)   \Big]    
& \leq  (1 + c \ee) \int_{\bb S_+^{d-1} \times \bb R } 
  H(x, y)  \frac{1}{\sigma \sqrt{m}} \phi \left(\frac{y - y'}{\sigma \sqrt{m}} \right) \nu(dx) dy  \notag\\
 & \quad  +  \frac{c \ee}{n^{1-\beta}} \| H \|_{\nu \otimes \Leb}
     +   \frac{c_{\ee} c_{\beta}}{ n^{(1-\beta)(1 + \delta)/2} }  \| H \|_{\mathscr H}.   
\end{align}
Substituting \eqref{JJJJJ-1111-001} into \eqref{JJJ-markov property}, using \eqref{bound-tau-y-bis} 
and the fact that $V_k(x, y) \leq  V_n(x,y)$,  
we get that for any $(x, y) \in \bb S_+^{d-1} \times \bb R$,
\begin{align} \label{JJJ004}
I_n(x, y)    
  \leq  (1 + c\ee) J_n(x, y) +  c \ee \frac{ 1+ V_n(x, y) }{n^{\frac{3}{2} - \beta}} \| H \|_{\nu \otimes \Leb}  
  +  c_{\ee} c_{\beta} \frac{1 + V_n(x, y)}{  n^{ 1 + \frac{\delta}{2} - \frac{\beta(1+\delta)}{2} } }  \| H \|_{\mathscr H},  
\end{align}
where 
\begin{align} \label{JJJ006}
 J_n(x, y)   
: &=  \int_{\bb R_{+}}   \left[ \int_{\bb S_+^{d-1} \times \bb R } 
  H(x, y)  \frac{1}{\sigma \sqrt{m}} \phi \left(\frac{y - y'}{\sigma \sqrt{m}} \right) \nu(dx) dy \right]
    \bb P \left( y + S_k^x \in dy', \tau_{x, y} > k \right)   \notag\\
& =  \int_{\bb R_{+}}   T_n(y')  \bb P \left( \frac{y + S_k^x}{\sigma \sqrt{k}} \in dy', \tau_{x, y} > k \right).   
\end{align}
and 
\begin{align}\label{Def-Lmt}
T_n(y') : & = \int_{\bb S_+^{d-1} \times \bb R } 
  H(x, y)  \frac{1}{\sigma \sqrt{m}} \phi \left(\frac{y - \sigma \sqrt{k} y'}{\sigma \sqrt{m}} \right) \nu(dx) dy  \notag\\
 & = \int_{\bb S_+^{d-1} \times \bb R } 
  H \left(x, \sigma \sqrt{k} y \right)  \frac{1}{\sqrt{m/k}}  \phi \left(\frac{y - y'}{\sqrt{m/k}} \right)    \nu(dx)  dy. 
\end{align}
Since the function $y' \mapsto T_n(y')$ is differentiable on $\bb R$ and vanishes as $y' \to \infty$, 
using Fubini's theorem, we get that for any $(x, y) \in \bb S_+^{d-1} \times \bb R$,
\begin{align} \label{ApplCondLT-001}
J_{n}(x, y) 
& =  \bb E  \left[  T_n \left( \frac{y + S_k^x}{\sigma \sqrt{k}} \right);  \tau_{x, y} > k \right]   \notag\\
& =  \bb E  \left[ - \int_{ \frac{y + S_k^x}{\sigma \sqrt{k}} }^{\infty} T_n'(t) dt;  \tau_{x, y} > k \right]   \notag\\
& = - \int_{\bb R_+}  T_n'(t)  \bb P \left(  \frac{y + S_k^x}{\sigma \sqrt{k}} \leq  t, \tau_{x, y} > k\right) dt.
 \end{align}
By the conditioned integral limit theorem (Theorem \ref{Theor-probIntUN-002}), 
there exists $\eta>0$ such that for any $\eta \in (0, \eta_0)$, 
$k \geq 1$, $x \in \bb S_+^{d-1}$, $y \in \bb R$ and $t \in \bb R_+$, 
\begin{align*}
\left|\bb P \left( \frac{y + S_k^x}{\sigma\sqrt{k}} \leq t, \tau_{x, y} > k\right)
 -  \frac{V_k(x,y)}{\sigma\sqrt{k}}   \int_{0}^{t}  \ell \left(\frac{y}{\sigma \sqrt{k}}, u   \right)  du  \right| 
 &  \leq  c_{\eta}  \frac{  1 + V_k(x, y) }{k^{1/2+\eta}}. 
\end{align*} 
Together with \eqref{ApplCondLT-001}, this implies that 
for any $n \geq 1$, $x \in \bb S_+^{d-1}$ and $y \in \bb R$, 
\begin{align} \label{ApplCondLT-small-y}
 \left| J_{n}(x, y)  +   \frac{V_k(x,y)}{\sigma\sqrt{k}}
   \int_{\bb R_+}  L_n'(t)  \left[ \int_{0}^{t}  \ell \left(\frac{y}{\sigma \sqrt{k}}, u   \right)  du \right]  dt \right|   
 \leq  c_{\eta}  \frac{  1 + V_k(x, y) }{ k^{1/2+\eta} } 
  \int_{\bb R_+}  |L_n'(t)|  dt. 
\end{align}
In view of \eqref{Def-Lmt}, using a change of variables $\frac{y'}{ \sqrt{m/k} } = y$ and $\frac{t}{ \sqrt{m/k} } = s$, 
and the fact that $\int_{\bb R} |\phi'(t)| dt < 1$, we get
\begin{align}\label{Inte-Lmt-sma-y}
 \int_{\bb R_+}  |T_n'(t)|  dt  
& \leq  \int_{\bb R_+} 
 \left[  \int_{\bb S_+^{d-1} \times \bb R } 
  H \left(x, \sigma \sqrt{k} y' \right)   
   \left| \phi' \left(\frac{y' - t}{\sqrt{m/k}} \right) \right|  \nu(dx)  \frac{dy'}{ \sqrt{m/k} }  \right]  \frac{dt}{\sqrt{m/k}}   \notag\\
& =  \int_{\bb R_+} 
 \left[  \int_{\bb S_+^{d-1} \times \bb R } 
  H \left(x, \sigma \sqrt{m} y \right)   
   \left| \phi' \left( y - s \right) \right|  \nu(dx)  dy  \right]  ds   \notag\\
& <  \int_{\bb S_+^{d-1} \times \bb R } 
  H \left(x, \sigma \sqrt{m} y \right)  \nu(dx)  dy  \notag\\
&  = \frac{1}{\sigma \sqrt{m}}  \| H \|_{\nu \otimes \Leb}. 
\end{align}
Since $\lim_{t \to \infty} T_n(t) = 0$ and $\ell ( \frac{y}{\sigma \sqrt{k}}, 0) = 0$, using integration by parts, we have
\begin{align*}
A: = - \int_{\bb R_+}  L_n'(t)  \left[ \int_{0}^{t}  \ell \left(\frac{y}{\sigma \sqrt{k}}, u   \right)  du \right]  dt
   = \int_{\bb R_+}  L_n(t)  \ell \left(\frac{y}{\sigma \sqrt{k}}, t  \right)  dt. 
\end{align*}
By the definition of $T_n$ (cf.\  \eqref{Def-Lmt}), a change of variables $y' = \sqrt{ \frac{n}{k} } s$ and $t' = \sqrt{ \frac{n}{k} } t$, 
it follows that
\begin{align*}
A & =   \int_{\bb R _{+}}   \left[  \int_{\bb S_+^{d-1} \times \bb R } H \left(x, \sigma \sqrt{k} y' \right)  
    \frac{1}{\sqrt{m/k}}  \phi \left(\frac{y' - t'}{\sqrt{m/k}} \right)   \nu(dx)  dy' \right]  \ell \left(\frac{y}{\sigma \sqrt{k}}, t'  \right) dt'  \notag\\
& =  \int_{\bb R _{+}}   \left[  \int_{\bb S_+^{d-1} \times \bb R } H \left(x, \sigma \sqrt{n} s \right)  
    \frac{1}{\sqrt{m/k}}  \phi \left(\frac{s - t}{\sqrt{m/n}} \right)  \nu(dx)  \frac{ds}{\sqrt{k/n}}   \right]  
      \ell \left(\frac{y}{\sigma \sqrt{k}}, \frac{t}{\sqrt{k/n}} \right) \frac{dt}{\sqrt{k/n}}  \notag\\
& =  \int_{\bb R _{+}}   \left[  \int_{\bb S_+^{d-1} \times \bb R } H \left(x, \sigma \sqrt{n} s \right)  
     \phi_{\delta_n} \left(s - t \right)  \nu(dx)  ds   \right]  
      \ell \left(\frac{y}{\sigma \sqrt{k}}, \frac{t}{\sqrt{k/n}} \right) \frac{dt}{\sqrt{k/n}}, 
\end{align*}
where $\delta_n = \frac{m}{n}$. 
Since $1 - \delta_n = \frac{k}{n}$ and $\frac{V_k(x,y)}{ \sigma \sqrt{k}} \frac{1}{H( \frac{y}{\sigma \sqrt{k}})} 
= \frac{V_n(x,y)}{ \sigma \sqrt{n}} \frac{1}{H( \frac{y}{\sigma \sqrt{n}})}$,
we get 
\begin{align}\label{heat-kernel-esti-002}
B : & = \frac{V_k(x,y) }{\sigma\sqrt{k}} A   \notag\\
& =   \frac{V_k(x,y) }{\sigma\sqrt{k}} \int_{\bb R _{+}}  
 \left[  \int_{\bb S_+^{d-1} \times \bb R } H \left(x, \sigma \sqrt{n} s \right)  
     \phi_{\delta_n} \left(s - t \right)  \nu(dx)  ds   \right]  
     \frac{\psi_{1-\delta_n}\left(\frac{y}{\sigma\sqrt{n}}, t \right)}{H\left( \frac{y}{\sigma \sqrt{k}} \right)} dt  \notag\\
& =   \frac{V_n(x,y) }{\sigma\sqrt{n}} 
 \int_{\bb R _{+}}   \left[  \int_{\bb S_+^{d-1} \times \bb R } H \left(x, \sigma \sqrt{n} s \right)  
     \phi_{\delta_n} \left(s - t \right)  \nu(dx)  ds   \right]  
     \ell_{1-\delta_n}\left(\frac{y}{\sigma\sqrt{n}}, t \right) dt, 
\end{align}
where $\ell_{1-\delta_n}$ is defined by \eqref{new-density-scale-v}. 
Using Fubini's theorem and the definition of $\phi_{\delta_n}$ and $\ell^+_{1-\delta_n}$ (cf.\ \eqref{def-ell-H-plus-minus}),
we get
\begin{align}\label{equa-term-A-01}
B  & = \frac{V_n(x,y) }{\sigma\sqrt{n}}
 \int_{\bb S_+^{d-1} \times \bb R } H \left(x, \sigma \sqrt{n} s \right)   
   \phi_{\delta_n} * \ell^+_{1-\delta_n} \left(\frac{y}{\sigma \sqrt{n}}, s \right) \nu(dx)  ds   \notag\\
& =   \frac{V_n(x,y) }{\sigma^2 n}  \int_{\bb S_+^{d-1}}  \int_{-\ee}^{\infty}  H \left(x, s \right)   
   \phi_{\delta_n} * \ell^+_{1-\delta_n} \left( \frac{y}{\sigma \sqrt{n}},  \frac{s}{\sigma \sqrt{n}} \right) \nu(dx)  ds, 
\end{align}
where in the last line we used the fact that $H(x, s)=0$ for any $x \in \bb S_+^{d-1}$ and $s \leq - \ee$. 
Applying Lemma \ref{convol-phi-psi-001} with $v = \delta_n$, and the fact that $1 - v = \frac{k}{n}$,
we get that for any $s \geq - \ee$, 
\begin{align*}
& \phi_{\delta_n}*\ell^+_{1-\delta_n}\left(\frac{y}{\sigma\sqrt{n}}, \frac{s}{\sigma\sqrt{n}}\right)    \notag\\
& = \ell \left(\frac{y}{\sigma\sqrt{n}}, \frac{s}{\sigma\sqrt{n}}\right)  
 -  \phi_{\delta_n}*\ell^-_{1-\delta_n}\left(\frac{y}{\sigma\sqrt{n}}, \frac{s}{\sigma\sqrt{n}}\right)  \notag\\
 & = \ell \left(\frac{y}{\sigma\sqrt{n}}, \frac{s}{\sigma\sqrt{n}}\right)  
 -  \int_{\bb R}  \phi_{\delta_n} (z) \ell^-_{1-\delta_n} \left( \frac{y}{\sigma \sqrt{n}}, \frac{s}{\sigma\sqrt{n}} - z \right) dz,
\end{align*}
where in the last line we used \eqref{def-convo-ell-H-minus}. 
Using Lemma \ref{Lipschiz  property for ell_H in y}
and the fact that $\ell^-_{1 - \delta_n} ( \frac{y}{\sigma \sqrt{n}}, \frac{s}{\sigma\sqrt{n}} ) = 0$
for any $y \in \bb R$ and $s \in \bb R_+$, there exists $c>0$ such that for any $s, y, z \in \bb R$, 
\begin{align*} 
\left| \ell^-_{1-\delta_n} \left( \frac{y}{\sigma \sqrt{n}}, \frac{s}{\sigma\sqrt{n}} - z \right)  \right|
= \left| \ell^-_{1-\delta_n} \left( \frac{y}{\sigma \sqrt{n}}, \frac{s}{\sigma\sqrt{n}} - z \right)
- \ell^-_{1-\delta_n} \left( \frac{y}{\sigma \sqrt{n}}, \frac{s}{\sigma\sqrt{n}} \right) \right| \leq c |z|. 
\end{align*}
Since $\int_{\bb R} \phi_{\delta_n} (z) |z| dz \leq  \sqrt{\delta_n} =\sqrt{ \frac{m}{n}} \leq c_{\beta} n^{-\beta/2}$, we get
\begin{align}\label{bound-ell-psi-a}
& \int_{\bb S_+^{d-1}}  \int_{0}^{\infty}  H \left(x, s \right)   
   \phi_{\delta_n} * \ell^+_{1-\delta_n} \left( \frac{y}{\sigma \sqrt{n}},  \frac{s}{\sigma \sqrt{n}} \right) \nu(dx)  ds  \notag\\
& \leq  \int_{\bb S_+^{d-1}}  \int_{0}^{\infty}  H \left(x, s \right)     \ell \left(\frac{y}{\sigma\sqrt{n}} ,\frac{s}{\sigma\sqrt{n}}\right) \nu(dx)  ds
  + \frac{c_{\beta}}{n^{\beta/2}} \| H \|_{\nu \otimes \Leb}. 
\end{align}
Using again Lemma \ref{Lipschiz  property for ell_H in y}, we have 
$| \ell^+_{1-\delta_n}  ( \frac{y}{\sigma \sqrt{n}}, \frac{s}{\sigma\sqrt{n}} - z )| \leq c ( \frac{1}{\sqrt{n}} + |z| )$
for any $y, z \in \bb R$ and $s \in [- \ee, 0]$ with $\ee <1/2$, so that
\begin{align*}
\left| \phi_{\delta_n}*\ell^+_{1-\delta_n}\left(\frac{y}{\sigma\sqrt{n}}, \frac{s}{\sigma\sqrt{n}}\right) \right|
& = \left|  \int_{\bb R}  \phi_{\delta_n} (z) \ell^+_{1-\delta_n} \left( \frac{y}{\sigma \sqrt{n}}, \frac{s}{\sigma\sqrt{n}} - z \right) dz \right| \notag\\
& \leq  c  \int_{\bb R}  \phi_{\delta_n} (z) \left(  \frac{1}{\sqrt{n}}  + |z| \right) dz 
 \leq    \frac{c_{\beta}}{n^{\beta/2}}, 
\end{align*}
where we used $\beta \in (0,\frac{1}{2}]$. 
Hence 
\begin{align}\label{bound-ell-psi-b} 
 \left| \int_{\bb S_+^{d-1}} \int_{-\ee}^0  H \left(x, s \right)  
 \phi_{\delta_n}*\ell^+_{1-\delta_n}\left(\frac{y}{\sigma\sqrt{n}} ,\frac{s}{\sigma\sqrt{n}}\right) \nu(dx) ds  \right| 
 \leq   \frac{c}{n^{\beta/2}}   \int_{\bb S_+^{d-1}} \int_{- \ee}^0  H \left(x, s \right)  \nu(dx) ds. 
\end{align}
Substituting \eqref{bound-ell-psi-a} and \eqref{bound-ell-psi-b} into \eqref{equa-term-A-01}, we get
\begin{align}\label{heat-kernel-esti-004}
B   \leq  \frac{V_n(x,y) }{\sigma^2 n}
 \int_{\bb S_+^{d-1}}  \int_{0}^{\infty}  H \left(x, s \right)     \ell \left(\frac{y}{\sigma\sqrt{n}} ,\frac{s}{\sigma\sqrt{n}}\right) \nu(dx)  ds  
  +  c_{\beta}  \frac{1 + V_n(x,y)}{ n^{1 + \beta/2} } \| H \|_{\nu \otimes \Leb}.  
\end{align}
Substituting \eqref{Inte-Lmt-sma-y} and \eqref{heat-kernel-esti-004} into \eqref{ApplCondLT-small-y}, 
and choosing $\beta<\eta$, we derive that
\begin{align}\label{Bound-Jn-x-abc}
 \left| J_{n}(x, y) -  \frac{V_n(x,y) }{\sigma^2 n}
 \int_{\bb S_+^{d-1}}  \int_{0}^{\infty}  H \left(x, s \right)  \ell \left(\frac{y}{\sigma\sqrt{n}} ,\frac{s}{\sigma\sqrt{n}}\right) \nu(dx)  ds \right|   
 \leq c_{\beta}  \frac{1 + V_n(x,y) }{ n^{1 + \beta/2} } \| H \|_{\nu \otimes \Leb}.  
\end{align}
Combining this with \eqref{JJJ004}, 
we obtain 
\begin{align}\label{last-inequality-001}
  I_n(x, y) 
& \leq   (1 + c\ee) \frac{V_n(x,y) }{\sigma^2 n}
 \int_{\bb S_+^{d-1}}  \int_{0}^{\infty}  H \left(x, z \right)     \ell \left(\frac{y}{\sigma\sqrt{n}} ,\frac{z}{\sigma\sqrt{n}}\right) \nu(dx)  dz   \notag\\
& \quad   
+   c_{\beta} \frac{1 + V_n(x,y) }{ n^{1 + \beta/2} } \| H \|_{\nu \otimes \Leb}  \notag\\
& \quad +    c \ee \frac{ 1+ V_n(x, y) }{n^{\frac{3}{2} - \beta}} \| H \|_{\nu \otimes \Leb}  
  +  c_{\ee}  c_{\beta} \frac{1 + V_n(x, y)}{  n^{ 1 + \frac{\delta}{2} - \frac{\beta(1+\delta)}{2} } }  \| H \|_{\mathscr H}. 
\end{align}
Since $\beta \in (0, \eta)$ with $\eta$ small,
we have $\frac{1}{n^{\frac{3}{2} - \beta}} \leq \frac{1 }{ n^{1 + \beta/2} }$. 
Now we choose a sequence $(\ee_n)_{n \geq 1}$ of positive numbers satisfying $\lim_{n \to \infty} \ee_n = 0$ and
$c_{\ee_n} n^{\frac{\beta (1 + \delta)}{2}} \leq n^{2\beta}$, 
so that $\frac{1}{  n^{ 1 + \frac{\delta}{2} - \frac{\beta(1+\delta)}{2} } } \leq  \frac{1}{n^{1 + \frac{\delta}{2} - 2\beta}}$. 
From \eqref{last-inequality-001}, we deduce the upper bound \eqref{eqt-A 001}.

\subsection{Proof of the lower bound}
The aim of this section is to establish the lower bound \eqref{eqt-A 002} of Theorem \ref{t-B 002},
whose proof turns out to be more intricate and delicate than that of the upper bound \eqref{eqt-A 001}. 
Let us retain the notation used in the proof of \eqref{eqt-A 001}. 
In particular,  $m=[ n^{1- \beta}]$ and $k = n-m$, where $\beta \in (0,\frac{1}{2}]$. 
As in \eqref{JJJ-markov property}, 
the Markov property of $(X_n, S_n)$ implies that for any $(x, y) \in \bb S_+^{d-1} \times \mathbb R$ and $n \geq 1$, 
\begin{align} \label{JJJ-markov property-Low}
I_n(x, y) :& = \bb E \Big[ F(X_n^x, y + S_n^x);  \tau_{x, y} > n  \Big]    
 = : I_{n,1}(x, y) - I_{n,2}(x, y),  
\end{align}
where  
\begin{align*}
I_{n,1}(x, y) & =  \int_{\bb S_+^{d-1} \times \mathbb R_+}  
\mathbb E  \left[ F \left( X_m^{x'}, y' + S_{m}^{x'} \right)  \right]  
\bb P \left(X_k^x \in dx', y + S_k^x \in dy', \tau_{x, y} > k \right),   \notag \\
I_{n,2}(x, y) & =   \int_{\bb S_+^{d-1} \times \mathbb R_+}  
 \bb{E} \left[ F \left( X_m^{x'}, y'+S_{m}^{x'} \right);  \tau_{x', y'} \leq m  \right]   
\bb P \left(X_k^x \in dx', y + S_k^x \in dy', \tau_{x, y} > k \right).   
\end{align*}

\textit{Lower bound of $I_{n,1}(x, y)$.}
Using the effective local limit theorem \eqref{LLT-general002}, with $\delta \in (0, 1]$ from \ref{CondiMoment}, 
we derive that there exists a constant $c > 0$ such that the following holds:
for any $\ee \in (0, \frac{1}{8})$, 
there exist constants $c_{\ee}, c_{\beta} >0$ such that for any $(x', y') \in \bb S_+^{d-1} \times \bb R$, $m \geq 1$, 
any nonnegative  function $F$ and nonnegative  functions $H, M \in \mathscr H$
satisfying $M \leq_{\ee} F \leq_{\ee} H$, 
\begin{align}\label{Pf_SmallStarting_Firstthm}
 \mathbb E  \left[ F \left( X_m^{x'}, y' + S_{m}^{x'} \right)   \right]
& \geq   \frac{1}{ \sigma \sqrt{m}} \int_{\bb S_+^{d-1} \times \bb R } 
    (M \left( x, y \right) - c \ee H(x, y) )  \phi \left(\frac{y - y'}{\sigma \sqrt{m}} \right)   \nu(dx)  dy     \notag\\
& \quad    -  \frac{ \ee c_{\beta}  }{n^{1-\beta}}  \| H \|_{\nu \otimes \Leb}   -  
  \frac{ c_{\ee} c_{\beta} }{ n^{(1-\beta))(1 + \delta)/2} }  \left(  \| H \|_{\mathscr H}  +  \| M \|_{\mathscr H} \right).   
\end{align}
Proceeding in the same way as in \eqref{Bound-Jn-x-abc} on the estimate of $J_n(x, y)$ defined by \eqref{JJJ006}, 
choosing $\beta \in (0, \eta)$ with $\eta$  from Theorem \ref{Theor-probIntUN-002}, 
one has, uniformly in $x \in \bb S_+^{d-1}$ and $y \in \bb R$,  
\begin{align*}
& \frac{1}{\sigma \sqrt{m} } \int_{\mathbb R_+}  \left[ \int_{\bb S_+^{d-1} \times \bb R } 
   (M(x', u) - c \ee H(x', u) )  \phi \left(\frac{u - y'}{\sigma \sqrt{m}} \right)   \nu(dx')  du  \right]
   \bb P \left( y + S_k^x  \in dy', \tau_{x, y} > k \right)    \notag\\
& \geq  \frac{V_n(x,y) }{\sigma^2 n}
 \int_{\bb S_+^{d-1}}  \int_{0}^{\infty}  (M \left(x, z \right) - c \ee H \left(x, z \right) ) 
     \ell \left(\frac{y}{\sigma\sqrt{n}} ,\frac{z}{\sigma\sqrt{n}}\right) \nu(dx)  dz    \notag\\
& \quad   -  c_{\beta}  \frac{  1 + V_n(x,y)  }{n^{1 + \beta/2}}  \| M \|_{\nu \otimes \Leb}. 
\end{align*}
Therefore, using \eqref{bound-tau-y-bis}, 
we deduce that uniformly in $x \in \bb S_+^{d-1}$ and $y \in \bb R$, 
\begin{align}\label{Lower_In_lll}
 I_{n,1}(x,y)  
 & \geq    \frac{V_n(x,y) }{\sigma^2 n}
 \int_{\bb S_+^{d-1}}  \int_{0}^{\infty}  (M \left(x, z \right) - c \ee H \left(x, z \right) ) 
      \ell \left(\frac{y}{\sigma\sqrt{n}} ,\frac{z}{\sigma\sqrt{n}}\right) \nu(dx)  dz    \notag\\
& \quad   -  c_{\beta}  \frac{  1 + V_n(x,y)  }{n^{1 + \beta/2}}  \| H \|_{\nu \otimes \Leb}    
 -  c_{\ee} c_{\beta} \frac{1+ V_n(x,y) }{ n^{ 1 + \frac{\delta}{2} - \frac{\beta(1+\delta)}{2} } }
      \left(  \| H \|_{\mathscr H}  +  \| M \|_{\mathscr H} \right). 
\end{align}

\textit{Upper bound of $I_{n,2}(x, y)$.}
We proceed to give an upper bound for $I_{n,2}(x, y)$ defined in \eqref{JJJ-markov property-Low}, which can be rewritten as
\begin{align} 
I_{n,2}(x, y) = \int_{\bb S_+^{d-1} \times \mathbb R_+}  F_m(x', y')  \bb P \left( X_k^x \in dx', y + S_k^x \in dy', \tau_{x, y} > k \right),  \label{K2-b01c-001}
\end{align}
where, for $x' \in \bb S_+^{d-1}$ and $y' \in \bb R$, 
\begin{align} \label{Bytheduality-001}
F_m(x', y') : =  \mathds 1_{\{y' \geq 0\}}  \bb{E} \left[ F \left( X_m^{x'}, y'+S_{m}^{x'} \right);  \tau_{x', y'} \leq m  \right] . 
\end{align}
The function $y' \mapsto F_m(x', y')$ is integrable on $\bb R$ since  $y' \mapsto F(x', y')$ is integrable on $\bb R$.
Let $\kappa$ be a smooth and non-negative  function compactly supported in $[-1,1]$ such that
$\int_{-1}^1 \kappa(t)dt=1$ 
and set $\kappa_{\ee}(t) = \frac{1}{\ee} \kappa(\frac{t}{\ee})$ for $t \in \bb R$ and $\ee>0$. 
Denote, for $\ee>0$,  $x' \in \bb S_+^{d-1}$ and $y' \in \bb R$,  
\begin{align} \label{DefM88}
H_m(x', y') :=  \mathds 1_{\{y' + \frac{\ee}{2} \geq 0\}}
\bb{E} \left[ \widetilde{H} * \kappa_{\ee} \left( X_m^{x'}, y'+S_{m}^{x'} \right) 
\overline\chi_{\ee/2}  \left( y' -\ee + \min_{1 \leq  j \leq m}  S_j^{x'}  \right)  \right],     
\end{align}
where $F \leq_{\ee/2} \widetilde{H} \leq_{\ee/2} H$, 
$\chi_{\ee/2}$ is the same as in \eqref{Def_chiee} and $\overline\chi_{\ee/2} = 1 - \chi_{\ee/2}$. 
Then $F_m \leq_{\ee/2}  H_m$ since $F \leq_{\ee/2} \widetilde{H}$. 
By \eqref{inequ-convo-funcs} of Lemma \ref{Lem_Inequality_Aoverline}, we get
\begin{align}\label{bound-Li-norm-M}
\| H_m \|_{\nu \otimes \Leb} \leq \int_{\bb R} \kappa_{\ee} (t) dt \left\|  H  \right\|_{\nu \otimes \Leb} 
\leq  \left\|  H  \right\|_{\nu \otimes \Leb}. 
\end{align}
Therefore, applying the upper bound \eqref{last-inequality-001} to the function $F_m$, 
and using the fact that $V_k(x,y) \leq V_n(x,y)$, 
we obtain that, there exists $\beta_0 >0$ such that for any $\beta \in (0, \beta_0)$, $x\in \bb R$ and $y \in \bb R$, 
\begin{align}\label{eqt-A 001_Lower}
I_{n,2}(x, y) 
&  \leq  c \frac{V_n(x,y) }{n}
 \int_{\bb S_+^{d-1}}  \int_{\bb R_+}  H_m \left(x, z \right)     \ell \left(\frac{y}{\sigma\sqrt{k}} ,\frac{z}{\sigma\sqrt{k}}\right)  dz \, \nu(dx)  \notag\\
& \quad  +   c_{\beta}  \frac{1 + V_n(x,y) }{ n^{1 + \beta} } \| H \|_{\nu \otimes \Leb} 
+  c_{\beta} \frac{1 + V_n(x,y) }{ n^{1 + \frac{\delta}{2} - \beta }  }  \| H_m \|_{\mathscr H}.   
\end{align}
For the first term on the right-hand side of \eqref{eqt-A 001_Lower}, we write 
\begin{align}\label{decom-integral-gm-ell}
 \int_{\bb R_+}  H_m \left(x, z \right)     \ell \left(\frac{y}{\sigma\sqrt{k}} ,\frac{z}{\sigma\sqrt{k}}\right)  dz
& = \int_{0}^{n^{1/2 - 3 \beta/8}}   H_m \left(x, z \right)   \ell \left(\frac{y}{\sigma\sqrt{k}} ,\frac{z}{\sigma\sqrt{k}}\right)  dz  \notag\\
& \quad + \int_{ n^{1/2 - 3 \beta/8}}^{\infty}  H_m \left(x, z \right)     \ell \left(\frac{y}{\sigma\sqrt{k}} ,\frac{z}{\sigma\sqrt{k}}\right)  dz. 
\end{align}
Since $\widetilde{H} \leq_{\ee/2} H$, 
by \eqref{DefM88} and the local limit theorem (see \eqref{LLT-general001} in Theorem \ref{LLT-general}), 
there exists $c>0$ such that for any $x \in \bb S_+^{d-1}$ and $z \in \bb R$, 
\begin{align*}
H_m \left(x, z \right) 
\leq  \bb{E}  \widetilde{H} * \kappa_{\ee}  \left( X_m^{x}, z + S_{m}^{x} \right)  
& \leq \frac{c_{\beta}}{n^{1/2 - \beta/2}} \| H * \kappa_{\ee} \|_{\nu \otimes \Leb}  
 +  \frac{c_{\ee} c_{\beta}}{ n^{(1 + \delta)/2} }  \| H * \kappa_{\ee} \|_{\mathscr H} \notag\\
 & \leq \frac{ c_{\beta} }{n^{1/2 - \beta/2}} \| H \|_{\nu \otimes \Leb}  
  +  \frac{c_{\ee} c_{\beta}}{ n^{(1 + \delta)/2} }  \| H  \|_{\mathscr H}, 
\end{align*}
where in the last inequality we used \eqref{convolution-kappa-H}. 
Using this and the fact that $m = [ n^{1-\beta}]$ and $k = n-m$ (so that $\sqrt{n/k} \leq 2$), we get
\begin{align}\label{first-term-gm-y-inte}
& \left| \int_{0}^{n^{1/2 - 3 \beta/8}}  H_m \left(x, z \right)   \ell \left(\frac{y}{\sigma\sqrt{k}} ,\frac{z}{\sigma\sqrt{k}}\right)  dz  \right|  \notag\\
& \leq   \left( \frac{c_{\beta}}{n^{1/2 - \beta/2}} \| H \|_{\nu \otimes \Leb}  +  \frac{c_{\ee} c_{\beta}}{ n^{(1 + \delta)/2} }  \| H \|_{\mathscr H} \right)
\int_{0}^{n^{1/2 - 3 \beta/8}}  \ell \left(\frac{y}{\sigma\sqrt{k}}, \frac{z}{\sigma\sqrt{k}}\right)  dz \notag\\
& \leq   \left(  c_{\beta} n^{\beta/2}  \| H \|_{\nu \otimes \Leb}  +  \frac{c_{\ee} c_{\beta}}{ n^{ \delta/2} }  \| H \|_{\mathscr H} \right)
  \int_{0}^{ 2  n^{- 3 \beta/8} }  \ell \left(\frac{y}{\sigma\sqrt{k}}, z \right)   dz,
\end{align}
where in the last line we used a change of variable $\frac{z}{\sigma\sqrt{k}} = z'$. 
Note that, by Lemma \ref{Lipschiz  property for ell_H in y} and the fact that $\ell(y, 0) = 0$, 
there exists a constant $c >0$ such that $|\ell(y, z)| = |\ell(y, z) - \ell(y, 0)| \leq c |z|$ 
for any $y, z \in \bb R$,
so that  $| \int_{0}^{ 2  n^{- 3 \beta/8} }  \ell \left(\frac{y}{\sigma\sqrt{k}}, z \right)   dz| \leq  
c  \int_{0}^{ 2  n^{- 3 \beta/8} }  z  dz  \leq c n^{-3\beta/4}$. 
Hence, from \eqref{first-term-gm-y-inte}
we obtain 
\begin{align}\label{integral-gm-ell-small}
 \left| \int_{\bb S_+^{d-1}}  \int_{0}^{n^{1/2 - 3 \beta/8}}
   H_m \left(x, z \right)   \ell \left(\frac{y}{\sigma\sqrt{k}} ,\frac{z}{\sigma\sqrt{k}}\right)  dz \, \nu(dx)  \right| 
    \leq  \frac{c_{\beta}}{ n^{\beta/4} }   \| H \|_{\nu \otimes \Leb}  +  \frac{c_{\ee} c_{\beta}}{ n^{ \delta/2} }  \| H \|_{\mathscr H}. 
\end{align}

Next, we handle the second term on the right-hand side of \eqref{decom-integral-gm-ell}. 
Since the function $(y, z) \mapsto |\ell(y, z)|$ is bounded on $\bb R \times \bb R$, 
using the stationarity of the measure $\nu$, we get
\begin{align*}
&  \left| \int_{\bb S_+^{d-1}}  \int_{ n^{1/2 - 3 \beta/8}}^{\infty}
  H_m \left(x, z \right)     \ell \left(\frac{y}{\sigma\sqrt{k}} ,\frac{z}{\sigma\sqrt{k}}\right)  dz \, \nu(dx)  \right|  \notag\\
& \leq  c  \int_{\bb S_+^{d-1}}  \int_{\bb R}  
\bb{E} \Big[ H * \kappa_{\ee} \left( x, z + S_{m}^{x} \right);   \tau_{x, z - 2\ee} \leq m  \Big]
  \mathds 1_{\{ z  \geq  n^{1/2 - 3 \beta/8}  \}}  dz \, \nu(dx)   \notag\\
& = c  \int_{\bb S_+^{d-1}}  \int_{\bb R}  H * \kappa_{\ee} \left( x,  t \right)
\bb P \left( \tau_{x, t - S_m^x - 2\ee}  \leq m,  t - S_m^x \geq  n^{1/2 - 3 \beta/8}   \right) dt \, \nu(dx), 
\end{align*}
where in the last equality we used a change of variable $z + S_m^x = t$.
Observe that the event $\{  \tau_{x, t - S_m^x - 2\ee}  \leq m \}$
means that there exists $1 \leq j \leq m$ such that 
\begin{align*}
t - S_m^x + S_j^x - 2\ee  < 0. 
\end{align*}
Thus, on the event $\{ \tau_{x, t - S_m^x - 2 \ee}  \leq m,  t - S_m^x \geq  n^{1/2 - 3 \beta/8}  \}$, 
there exists $1 \leq j \leq m$ satisfying $n^{1/2 - 3 \beta/8} - 2\ee + S_j^x <   0$. 
This implies that 
\begin{align*}
\bb P \left( \tau_{x, t - S_m^x - 2\ee}  \leq m,  t - S_m^x \geq  n^{1/2 - 3 \beta/8}  \right)
& \leq  \bb P \left(  n^{1/2 - 3 \beta/8} - \ee + \min_{1 \leq j \leq m} S_j^x   < 0   \right)  \notag\\
& \leq  \bb P \left(  \max_{1 \leq j \leq m}  |S_j^x| \geq   \frac{1}{2} n^{1/2 - 3 \beta/8}  \right). 
\end{align*}
Noting that $m= [ n^{1 - \beta} ]$, 
we use the martingale approximation (Lemma \ref{Prop-MartApp})
and \eqref{appli-Fuk-Nagaev-001} to get
\begin{align}\label{Proba_002}
\bb P \left(  \max_{1 \leq j \leq m}  |S_j^x| \geq   \frac{1}{2} n^{1/2 - 3 \beta/8}  \right)
= \bb P \left(  \max_{1 \leq j \leq m}  |S_j^x| \geq   \frac{1}{2}  n^{\beta/8} \sqrt{m}   \right)
 \leq  \frac{c_{\beta}}{ n^{\beta/8}} + \frac{c}{ n^{\delta/2} } \leq \frac{c_{\beta}}{ n^{\beta/8}}, 
\end{align}
where we take $\beta \leq \delta$. 
Hence, using \eqref{convolution-kappa-H}, we deduce that  
\begin{align*}
 \left| \int_{\bb S_+^{d-1}}  \int_{ n^{1/2 - 3 \beta/8}}^{\infty}
  H_m \left(x, z \right)     \ell \left(\frac{y}{\sigma\sqrt{k}} ,\frac{z}{\sigma\sqrt{k}}\right)  dz \, \nu(dx)  \right|  
 \leq  \frac{c_{\beta}}{ n^{\beta/8}}  \| H \|_{\nu \otimes \Leb}. 
\end{align*}
Combining this with \eqref{decom-integral-gm-ell} and \eqref{integral-gm-ell-small}, we conclude that 
\begin{align}\label{Bound_J1}
\left| \int_{\bb S_+^{d-1}}  \int_{\bb R_+}  H_m \left(x, z \right)     \ell \left(\frac{y}{\sigma\sqrt{k}} ,\frac{z}{\sigma\sqrt{k}}\right)  dz \, \nu(dx)
\right|
  \leq   \frac{c_{\beta}}{ n^{\beta/8}}  \| H \|_{\nu \otimes \Leb} +  \frac{c_{\ee} c_{\beta}}{ n^{ \delta/2} }  \| H \|_{\mathscr H}.
\end{align}
Using the second inequality in \eqref{inequ-convo-funcs},
we have $\| H_{m} \|_{ \mathscr H}  \leq  \frac{c}{\ee} m^{ \frac{4 + \delta}{ 4(2 + \delta) } }  \| H\|_{\mathscr H} 
\leq  \frac{c}{\ee} n^{ \frac{4 + \delta}{ 4(2 + \delta) } } \| H\|_{\mathscr H}$. 
Substituting this and \eqref{Bound_J1} into \eqref{eqt-A 001_Lower} yields 
\begin{align*}
I_{n,2}(x, y) & \leq  c  \frac{V_n(x,y) }{n}  \left(  \frac{c_{\beta}}{ n^{\beta/8}} \| H \|_{\nu \otimes \Leb} +
  \frac{c_{\ee} c_{\beta}}{ n^{ \delta/2} }  \| H \|_{\mathscr H} \right)  \notag\\
  & \quad + c_{\beta}  \frac{1 + V_n(x,y) }{ n^{1 + \beta} } \| H \|_{\nu \otimes \Leb} 
+  \frac{c_{\beta}}{\ee}  \frac{1 + V_n(x,y) }{ n^{1 + \frac{\delta}{2} -  \frac{4 + \delta}{ 4(2 + \delta) } - \beta }  }  \| H\|_{\mathscr H}. 
\end{align*}
Since $\frac{\delta}{2} -  \frac{4 + \delta}{ 4(2 + \delta) }  = \frac{ 2 \delta^2 + 3 \delta - 4 }{4(2 + \delta)}$, 
 together with \eqref{Lower_In_lll}, 
this completes the proof of the lower bound \eqref{eqt-A 002}. 

To finish the proof of Theorem \ref{t-B 002}, 
we note that, under the Furstenberg-Kesten condition \ref{Condi-FK} (in place of \ref{Condi-AP}), 
the second inequality in \eqref{inequ-convo-funcs} can be improved to 
$\| L_{m, \ee} \|_{ \mathscr H}  \leq  \frac{c}{\ee}   \| H\|_{\mathscr H}$.

\begin{proof}[Proof of Theorems \ref{Thm-nodrift-ysmall} and \ref{Thm-nodrift-ysmall-target}]
Theorems \ref{Thm-nodrift-ysmall} and \ref{Thm-nodrift-ysmall-target} 
are consequences of the more general statement in Theorem \ref{t-B 002}. 
\end{proof}

\section{Proof of Theorem \ref{Thm-CLLT-small-x}}

\subsection{Strict contraction properties}

In this section, we establish key technical lemmas concerning the strict contraction properties of the Markov chain $(X_n^x)_{n \geq 0}$ 
under the Furstenberg-Kesten condition \ref{Condi-FK}. 
These results will be used to obtain Theorem \ref{Thm-CLLT-small-x}. 
The following lemma shows that the matrix norm $\|g\|$ and the vector norm $|gx|$ are comparable.

\begin{lemma}\label{lemma kappa 1}
Assume \ref{Condi-FK}. 
Then, for any  $x \in \bb S_+^{d-1}$ and $\mu$-almost every $g \in \mathcal M_+$, 
\begin{align*}
\frac{1}{\varkappa^2} \|g\| \leq  | gx | \leq  \|g\|, 
\end{align*}
where $\varkappa > 1$ is the constant from condition \ref{Condi-FK}. 
\end{lemma}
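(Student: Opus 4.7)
The plan is to handle the two inequalities separately. The upper bound $|gx| \leq \|g\|$ is immediate from the definition $\|g\| = \sup_{y \in \bb S_+^{d-1}} |gy|$ since $x \in \bb S_+^{d-1}$, so no structural assumption on $g$ is even needed for this half.

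For the lower bound I would exploit that, on the positive cone, the $\ell^1$-norm is additive. Writing $x_j = \langle x, e_j \rangle \geq 0$ with $\sum_j x_j = 1$, and letting $c_j(g) = \sum_{i=1}^d g^{i,j}$ denote the $j$-th column sum of $g$, a direct computation gives
\begin{align*}
|gx| \;=\; \sum_{i,j=1}^d g^{i,j} x_j \;=\; \sum_{j=1}^d x_j\, c_j(g),
\end{align*}
so that $|gx|$ is a convex combination of the column sums $c_j(g)$. Maximising over $x \in \bb S_+^{d-1}$ (the extreme points being the standard basis vectors $e_j$) therefore identifies $\|g\| = \max_{1 \leq j \leq d} c_j(g)$, while for any $x \in \bb S_+^{d-1}$ one has the deterministic lower bound $|gx| \geq \min_{1 \leq j \leq d} c_j(g)$.

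The remaining step is to invoke \ref{Condi-FK} to control the ratio of column sums. Since for every $j$,
\begin{align*}
d \min_{1 \leq i,k \leq d} g^{i,k} \;\leq\; c_j(g) \;\leq\; d \max_{1 \leq i,k \leq d} g^{i,k},
\end{align*}
condition \ref{Condi-FK} immediately yields
\begin{align*}
\frac{\max_{1 \leq j \leq d} c_j(g)}{\min_{1 \leq j \leq d} c_j(g)} \;\leq\; \frac{\max_{i,k} g^{i,k}}{\min_{i,k} g^{i,k}} \;\leq\; \varkappa,
\end{align*}
so $\min_j c_j(g) \geq \|g\|/\varkappa \geq \|g\|/\varkappa^2$ (since $\varkappa > 1$), and the lower bound is proved.

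There is essentially no obstacle here: once the observation that $|gx|$ is a convex combination of column sums is made, the Furstenberg--Kesten bound closes the argument in one line. The only point worth flagging is that the exponent $2$ in $\varkappa^2$ is cosmetically weaker than the natural estimate $\|g\|/\varkappa$; presumably the authors state the result with $\varkappa^2$ for convenience in later applications rather than sharpness.
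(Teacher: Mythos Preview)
Your proof is correct and follows essentially the same route as the paper: both arguments compute $|gx|=\sum_{i,j} g^{i,j}x_j$ directly and then invoke the Furstenberg--Kesten bound \ref{Condi-FK}. The one difference is in bookkeeping: the paper passes through the diagonal, bounding $g^{i,j}\geq \varkappa^{-1}g^{i,i}$ and then $\sum_i g^{i,i}\geq \varkappa^{-1}\max_j\sum_i g^{i,j}$, which costs two factors of $\varkappa$; your convex-combination-of-column-sums argument applies \ref{Condi-FK} once and yields the sharper bound $|gx|\geq \|g\|/\varkappa$. As you note, the exponent $2$ in the stated lemma is not sharp but is harmless for the downstream applications.
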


\begin{proof}
The second inequality is obvious by the definition of the matrix norm 
\begin{align}\label{compute-matrix-norm}
\|g\| = \sup_{x \in \bb S_+^{d-1}} |gx| = \max_{1 \leq j \leq d} \sum_{i=1}^d g^{i,j}. 
\end{align}
For $\mu$-almost every $g = (g^{i,j})_{1 \leq i, j \leq d} \in \mathcal M_+$
 and $x = (x_1, \ldots, x_d) \in \bb S_+^{d-1}$, we have
\begin{align*}
|gx| 
= \sum_{i = 1}^d \sum_{j = 1}^d  g^{i,j}  x_j
 \geq \frac{1}{\varkappa}  \sum_{i = 1}^d \sum_{j = 1}^d  g^{i,i} x_j
=  \frac{1}{\varkappa}  \sum_{i = 1}^d  g^{i,i}  
 \geq  \frac{1}{\varkappa^2}  \max_{1 \leq j \leq d}  \sum_{i = 1}^d  g^{i,j}  = \frac{1}{\varkappa^2}  \|g\|, 
\end{align*}
which proves the first inequality. 
\end{proof}

The following lemma relies on the approximate duality relation \eqref{intro-duality} and plays an important role in the sequel. 

\begin{lemma}\label{Cor_CoarseBound}
Assume \ref{CondiMoment} and \ref{Condi-FK}. 
Then there exists a constant $c>0$ such that for any measurable function $H$ on 
$\bb S_+^{d-1} \times \bb R$, $n \geq 1$ and $x' \in \bb S_+^{d-1}$, 
\begin{align*}
I: = \int_{\bb R} \sup_{x \in \bb S_+^{d-1}} \bb E  \Big[ H( X_n^x,  y + S_n^x);  \tau_{x, y} > n  \Big]  dy
   \leq \frac{c}{\sqrt{n}}  \int_{ 0 }^{\infty}  H_1 (z)  (1 + V_n^*(x', z))  dz,
\end{align*}
where $\overline{\varkappa} = 2 \log \varkappa$ 
and $H_1(z) = \sup_{y \in \bb R: |y-z| \leq  2\overline{\varkappa}}  \sup_{x \in \bb S_+^{d-1}} |H(x, y)|$, $z \in \bb R$. 
\end{lemma}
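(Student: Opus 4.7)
The plan is to reverse the direction of the walk via a pathwise change of variable, convert the constraint $\{\tau_{x,y} > n\}$ into one on the dual walk through the approximate duality provided by \ref{Condi-FK}, and conclude via the tail estimate \eqref{bound-tau-y-bis} applied to $\tau^*$. All the $O(\overline{\varkappa})$ shifts produced in the process should be absorbed into the $2\overline{\varkappa}$-window defining $H_1$.

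First I would dispose of the $\sup_x$ inside the integral using the pathwise uniformity granted by \ref{Condi-FK}. Lemma \ref{lemma kappa 1} yields $|S_k^x - S_k^{x'}| \leq \overline{\varkappa}$ for every $x, x' \in \bb S_+^{d-1}$, every $k \geq 0$, and every $\omega$. This gives two pathwise bounds: $|H(X_n^x, y+S_n^x)| \leq \sup_{|s-(y+S_n^{x'})| \leq \overline{\varkappa}} \sup_{x''}|H(x'', s)|$, and $\mathds{1}_{\tau_{x,y}>n} \leq \mathds{1}_{\tau_{x', y+\overline{\varkappa}}>n}$. Taking expectations (the right-hand side being independent of $x$), integrating in $y$, then performing the shift $\tilde{y} = y + \overline{\varkappa}$ and the pathwise change of variable $z = \tilde{y} + S_n^{x'}$, the event $\{\tau_{x', \tilde{y}} > n\}$ turns into $\{z \geq M_n\}$ with $M_n := \max_{0 \leq k \leq n-1}(S_n^{x'} - S_{n-k}^{x'})$, while the inner supremum collapses to a supremum over an asymmetric window of width $2\overline{\varkappa}$ centered near $z$, dominated by $H_1(z)$. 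This delivers
\[
I \leq \int_0^\infty H_1(z)\, \bb P(M_n \leq z)\, dz.
\]

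Next I would exploit the approximate duality for the reversed walk. Applying Lemma \ref{lemma kappa 1} to the products $g_n \cdots g_{n-k+1}$ and to their transposes $g_{n-k+1}^{{\rm T}} \cdots g_n^{{\rm T}}$ (whose norms are comparable under \ref{Condi-FK}), one obtains $|(S_n^{x'}-S_{n-k}^{x'}) + S_k^{x',*}| \leq \overline{\varkappa}$ for every $0 \leq k \leq n$. Consequently $\{M_n \leq z\} \subseteq \{\tau^*_{x', z + \overline{\varkappa}} > n-1\}$, and \eqref{bound-tau-y-bis} applied to the dual walk gives $\bb P(\tau^*_{x', z+\overline{\varkappa}} > n-1) \leq \frac{c}{\sqrt{n}} (1 + V_n^*(x', z + \overline{\varkappa}))$. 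A short argument based on the linear growth of $V^*(x', y)$ for large $y$, the boundedness of $V^*(x', y)$ on compact sets, and the Lipschitz continuity of $L$ on $[0, \infty)$ shows that $V_n^*(x', z + \overline{\varkappa}) \leq C(1 + V_n^*(x', z))$ uniformly in $z \geq 0$ and $n \geq 1$, which absorbs the final shift and yields the claimed bound.

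The main obstacle will be the careful accounting of the $O(\overline{\varkappa})$ shifts that arise at every stage: from the uniformity across starting points, from the approximate duality $S_n^{x'} - S_{n-k}^{x'} \approx -S_k^{x',*}$, and from the perturbation in the second argument of $V_n^*$. Each of these must be shown to fit within the $2\overline{\varkappa}$-window defining $H_1$, which forces a rather tight bookkeeping of constants. A secondary technical point is to establish the comparability $V_n^*(x', z + \overline{\varkappa}) \leq C(1 + V_n^*(x', z))$ uniformly in $z$ and $n$; this should follow from the definition $V_n^*(x, y) = V^*(x, y) L(y/(\sigma \sqrt{n}))$ together with a quantitative control of $V^*$ and $L$ under bounded shifts, and possibly from monotonicity properties of $V^*$ in the second variable.
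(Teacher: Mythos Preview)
Your proposal is correct and follows essentially the same route as the paper's proof: remove the $\sup_x$ via Lemma \ref{lemma kappa 1}, perform the change of variable $z = \tilde y + S_n^{x'}$ to rewrite the constraint as a reversed-walk event, bound this by $\{\tau^*_{x',\,z+\text{const}} > n-1\}$ through the approximate duality, and conclude with \eqref{bound-tau-y-bis}. The paper makes the duality step explicit by passing through $-\log\|g_n\cdots g_{k+1}\|$ and $\|g^{\rm T}\|\leq d\|g\|$ (picking up an extra $\log d$ in the shift, cf.\ \eqref{inequ-dual-Sn-bb}), whereas you invoke norm comparability directly; your claimed constant $\overline{\varkappa}$ in the duality is slightly too tight, but as you yourself note this is absorbed either into the window defining $H_1$ or into the comparison $V_n^*(x',z+\text{const})\leq C(1+V_n^*(x',z))$, exactly as the paper does.
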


\begin{proof}
By Lemma \ref{lemma kappa 1}, it holds that for any $x, x' \in \bb S_+^{d-1}$ and $\mu$-almost every $g \in \mathcal M_+$, 
\begin{align}\label{inequa-log-norm-vec}
\log |gx| \geq \log \|g\| - \overline{\varkappa} \geq  \log |gx'| - \overline{\varkappa}, 
\end{align}
where $\overline{\varkappa} = 2 \log \varkappa$ and $\varkappa > 1$ is given by condition \ref{Condi-FK}. 
Hence, for any $y \in \bb R$ and $x, x' \in \bb S_+^{d-1}$,
\begin{align*}
 \bb E  \Big[ H( X_n^x,  y + S_n^x);  \tau_{x, y} > n  \Big]  
 \leq  \bb E  \Big[ \overline{H} (y + S_n^{x'} );   \tau_{x', y + \overline{\varkappa}} > n  \Big],  
\end{align*}
where $\overline{H}(z) = \sup_{y \in \bb R: |y-z| \leq  \overline{\varkappa}}  \sup_{x \in \bb S_+^{d-1}} |H(x, y)|$, $z \in \bb R$. 
By Fubini's theorem and a change of variable $y + \overline{\varkappa} +  S_n = z$, we get 
\begin{align}\label{bound-I-hh}
I:&  \leq  \int_{\bb R}  \bb E  \Big[ \overline{H} (y + S_n^{x'});   \tau_{x', y + \overline{\varkappa}} > n  \Big]   dy  \notag\\
& =  \bb E  \int_{\bb R}  \overline{H} (y + S_n^{x'})  
   \mathds 1_{ \{ y + \overline{\varkappa} + S_1^{x'} \geq 0,  \ldots, y + \overline{\varkappa} + S_n^{x'} \geq 0  \} }  dy   \notag\\
& =    \int_{\bb R}  \overline{H} (z - \overline{\varkappa})  
   \bb P \left( z - S_n^{x'} + S_1^{x'} \geq 0,  \ldots, 
                   z  - S_n^{x'} + S_{n-1}^{x'} \geq 0,  z  \geq 0  \right)  dz   \notag\\
& =    \int_{0}^{\infty}  \overline{H} (z - \overline{\varkappa}) 
   \bb P \left( z  - S_n^{x'} + S_1^{x'} \geq 0,  \ldots, 
                   z  - S_n^{x'} + S_{n-1}^{x'} \geq 0  \right)  dz. 
\end{align}
Using  \eqref{inequa-log-norm-vec}, 
we have that under $\bb P$, for any $1 \leq k \leq n-1$, 
\begin{align}\label{inequ-BJ01}
- S_n^{x'} + S_k^{x'} 
  = - \log | g_n \cdots g_{k+1} ( (g_k \cdots g_1) \cdot x') | 
 \leq  - \log \| g_n \cdots g_{k+1} \|  +  \overline{\varkappa}. 
\end{align}
As a consequence of \eqref{compute-matrix-norm}, it holds that, $\mu$-almost every $g \in \mathcal M_+$, 
\begin{align}\label{inequa-g-transpose}
\| g^{ {\rm T} } \| 
= \max_{1 \leq i \leq d} \sum_{j=1}^d g^{i,j} 
\leq  d \max_{1 \leq j \leq d} \sum_{i=1}^d g^{i,j}
=  d \|g\|. 
\end{align}
By \eqref{inequ-BJ01}, \eqref{inequa-g-transpose}
 and  Lemma \ref{lemma kappa 1},  for any $x'' \in \bb S_+^{d-1}$ and $1 \leq k \leq n-1$, 
 we have that under $\bb P$, 
\begin{align}\label{inequ-dual-Sn}
- S_n^{x''} + S_k^{x''}  \leq  - \log \| g_{k+1}^{ {\rm T} }  \cdots g_n^{ {\rm T} }  \|  +  \overline{\varkappa} + \log d
\leq  - \log | g_{k+1}^{ {\rm T} }  \cdots g_n^{ {\rm T} }  x''|  +  \overline{\varkappa} + \log d. 
\end{align}
By \eqref{def-Xn-Sn-dual}, for $1 \leq i \leq n$, we have denoted $h_i = g_{n-i+1}^{ {\rm T} }$ 
and $S_i^{x'', *} = - \log |h_i \cdots h_1 x''|$. 
It follows that under $\bb P$, 
\begin{align}\label{inequ-dual-Sn-bb}
- S_n^{x''} + S_k^{x''}  
\leq  - \log | h_{n-k} \cdots h_1 x'' | +  \overline{\varkappa} + \log d  = S_{n-k}^{x'', *} + \overline{\varkappa} + \log d.  
\end{align}
Therefore, 
applying \eqref{bound-tau-y-bis} for $\tau^*_{x,y}$ instead of $\tau_{x,y}$, we get that there exists a constant $c>0$ such that for any 
$x'' \in \bb S_+^{d-1}$, $z \in \bb R_+$ and $n \geq 1$, 
\begin{align*}
& \bb P \left( z  - S_n^{x'} + S_1^{x'} \geq 0,  \ldots, 
                   z  - S_n^{x'} + S_{n-1}^{x'} \geq 0  \right)   \notag\\
& \leq  \bb P \left( z + \overline{\varkappa} + \log d + S_{n-1}^{x'', *} \geq 0,  \ldots, 
                   z + \overline{\varkappa} + \log d + S_{1}^{x'', *} \geq 0  \right)   \notag\\
& = \bb P \left( \tau_{x'',  z + \overline{\varkappa} + \log d }^* > n-1 \right) 
  \leq  c \frac{1 + V_n^*(x'', z + \overline{\varkappa} + \log d) }{\sqrt{n}} 
  \leq  c' \frac{1 + V_n^*(x'', z) }{\sqrt{n}} . 
\end{align*}
Substituting this into \eqref{bound-I-hh} and recalling that 
$H_1(z) = \sup_{y \in \bb R: |y-z| \leq  2\overline{\varkappa}}  \sup_{x \in \bb S_+^{d-1}} |H(x, y)|$,
we conclude the proof of the lemma. 
\end{proof}

Under condition \ref{Condi-AP}, 
it follows from \cite[Lemma 3.1]{HH08} that,
for any $a \in (0,1)$, there exists a constant $c = c(a) > 0$ 
such that for any $x_1, x_2 \in \bb S_+^{d-1}$ with $\mathbf{d}(x_1, x_2) < a$, 
and $\mu$-almost every $g \in \mathcal M_+$, 
\begin{align}\label{Lem-Contin-Cocycle}
\left|  \log |g x_1| - \log |g x_2| \right| 
\leq  c \mathbf{d}(x_1, x_2). 
\end{align}
Using \eqref{Lem-Contin-Cocycle}, we now establish the following analogue of 
 Lemma \ref{Lem-conti-square} when condition \ref{Condi-AP} is replaced by \ref{Condi-FK}.  

\begin{lemma}\label{Lem-contractivity}
Assume \ref{CondiMoment} and \ref{Condi-FK}.   
Then there exists  $c >0$  such that, 
  $\bb P$-almost surely,
\begin{align}\label{contractivity-inequ-0}
\sup_{n \geq 1} \sup_{x, x' \in \bb S_+^{d-1}: x \neq x'}  \frac{|S_n^x -   S_n^{x'}|}{ \bf d(x, x') }   \leq  c, 
\end{align}
and
\begin{align}\label{contractivity-inequ}
\sup_{n \geq 1} \sup_{x, x' \in \bb S_+^{d-1}: x \neq x'} 
\frac{ \left| \min_{1 \leq j \leq n}   S_j^x -  \min_{1 \leq j \leq n}  S_j^{x'} \right| }{ \bf d(x, x') }
\leq  c. 
\end{align}
\end{lemma}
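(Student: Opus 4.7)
The plan is to prove the pathwise Lipschitz estimate \eqref{contractivity-inequ-0} and then deduce \eqref{contractivity-inequ} from it by an elementary minimum inequality. The crux is that \ref{Condi-FK} delivers two deterministic pointwise estimates: a uniform contraction of the Hilbert metric $\mathbf{d}$ and a uniform Lipschitz continuity of the cocycle $x \mapsto \log|gx|$.

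First, I would observe that \ref{Condi-FK} forces every matrix in $\supp\mu$ to be strictly positive (otherwise $\min_{i,j} g^{i,j} = 0$ would make the ratio in \eqref{Inequality-FK60} infinite), so \eqref{inequality-contraction} applies with $c(g) < 1$. Moreover, the coordinate ratios $(gx)_i/(gx)_j$ lie in $[\varkappa^{-2}, \varkappa^2]$ for every $x \in \bb S_+^{d-1}$, so the projective diameter of $g \cdot \bb S_+^{d-1}$ is bounded uniformly in $g$. By the Birkhoff--Hopf contraction theorem this yields $c(g) \leq \rho$ for some $\rho \in (0, 1)$ depending only on $\varkappa$, and iterating gives $\mathbf{d}(X_n^x, X_n^{x'}) \leq \rho^n \mathbf{d}(x, x')$ almost surely for all $n, x, x'$.

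Second, I would establish a uniform global Lipschitz bound $|\log|gx| - \log|gx'|| \leq C \mathbf{d}(x, x')$ for $\mu$-a.e.\ $g$ and all $x, x' \in \bb S_+^{d-1}$, with $C = C(\varkappa)$. When $\mathbf{d}(x, x') < 1/2$ this is exactly \eqref{Lem-Contin-Cocycle}. When $\mathbf{d}(x, x') \geq 1/2$, setting $a = \min_{i,j} g^{i,j}$ and using $\sum_j x_j = 1$ gives $|gx| = \sum_j (\sum_i g^{i,j}) x_j \in [ad, \varkappa a d]$, and similarly for $x'$, so $|gx|/|gx'| \in [\varkappa^{-1}, \varkappa]$ and $|\log|gx| - \log|gx'|| \leq \log\varkappa \leq 2\log\varkappa \cdot \mathbf{d}(x, x')$. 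Taking the max over the two regimes yields the claim.

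Combining these two estimates with the cocycle telescoping $S_n^x - S_n^{x'} = \sum_{k=1}^n (\log|g_k X_{k-1}^x| - \log|g_k X_{k-1}^{x'}|)$ gives
\[
|S_n^x - S_n^{x'}| \;\leq\; C \sum_{k=1}^n \mathbf{d}(X_{k-1}^x, X_{k-1}^{x'}) \;\leq\; C \mathbf{d}(x, x') \sum_{k=0}^{\infty} \rho^k \;=\; \frac{C}{1 - \rho}\, \mathbf{d}(x, x')
\]
almost surely and uniformly in $n, x, x'$, proving \eqref{contractivity-inequ-0}. Finally \eqref{contractivity-inequ} is immediate from \eqref{contractivity-inequ-0} by the elementary inequality $|\min_j a_j - \min_j b_j| \leq \sup_j |a_j - b_j|$ applied pathwise to $a_j = S_j^x$ and $b_j = S_j^{x'}$. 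The only delicate step is the first one: extracting a deterministic uniform contraction rate $\rho$ from \ref{Condi-FK}, since \eqref{inequality-contraction} as stated only guarantees $c(g) < 1$ pointwise in $g$ and not uniformly.
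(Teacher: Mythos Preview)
Your proof is correct and follows essentially the same approach as the paper: both use the uniform contraction of $\mathbf{d}$ under \ref{Condi-FK} (the paper cites \cite[Proposition 3.1]{Hen97} for this, where you sketch the Birkhoff--Hopf argument) together with the Lipschitz bound \eqref{Lem-Contin-Cocycle}, splitting into the cases $\mathbf{d}(x,x') \leq 1/2$ and $\mathbf{d}(x,x') > 1/2$. The only organizational difference is that the paper proves \eqref{contractivity-inequ} directly and declares \eqref{contractivity-inequ-0} analogous, whereas you prove \eqref{contractivity-inequ-0} and deduce \eqref{contractivity-inequ} from it via $|\min_j a_j - \min_j b_j| \leq \sup_j |a_j - b_j|$---a slightly cleaner route.
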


\begin{proof}
We only prove \eqref{contractivity-inequ} since the proof of \eqref{contractivity-inequ-0} is similar and easier. 

We first consider the case when $\bf d(x, x') > \frac{1}{2}$. 
By Lemma \ref{lemma kappa 1},  $\bb P$-almost surely, for any $x, x' \in \bb S_+^{d-1}$ and $j \geq 1$, 
\begin{align*}
S_j^x \leq  S_j^{x'} + 2 \log \varkappa,
\end{align*}
where $\varkappa > 1$ is given by condition \ref{Condi-FK}. 
Taking the minimum over $1 \leq j \leq n$ first on the left hand side and then on the right hand side, 
we get that, $\bb P$-almost surely, for any $x, x' \in \bb S_+^{d-1}$, 
\begin{align*}
\min_{1 \leq j \leq n}  S_j^x  \leq  \min_{1 \leq j \leq n} S_j^{x'}  + 2 \log \varkappa.  
\end{align*}
The lower bound for $\min_{1 \leq j \leq n}  S_j^x$ can be obtained in the same way.
Therefore,  when $\bf d(x, x') > \frac{1}{2}$,  \eqref{contractivity-inequ} holds with $c = 4 \log \varkappa$.

We next consider the case when $\bf d(x, x') \leq \frac{1}{2}$. 
Under condition \ref{Condi-FK}, there exists a constant $0< r <1$ such that, $\bb P$-almost surely,
 for any $x, x' \in \bb S_+^{d-1}$ and $k \geq 1$, 
\begin{align}\label{contractivity-MC}
\bf d \left( X_{k-1}^x, X_{k-1}^{x'} \right)  \leq  r^{k-1}  \bf d \left(x, x' \right), 
\end{align}
see \cite[Proposition 3.1]{Hen97}. 
Hence, by \eqref{Lem-Contin-Cocycle}, 
\begin{align*}
\Big| \log |g_k X_{k-1}^x| - \log |g_k X_{k-1}^{x'}|  \Big|
\leq  c r^{k-1}  \bf d \left(x, x' \right). 
\end{align*}
Since $S_j^x = \sum_{k = 1}^j \log |g_k X_{k-1}^x|$ and $X_0^x = x$, 
it follows that for any $1 \leq j \leq n$ and $x, x' \in \bb S_+^{d-1}$, 
\begin{align*}
\min_{1 \leq j \leq n}  S_j^x  
\leq  S_j^x  
 \leq  S_j^{x'} + c \sum_{k=1}^n  r^{k-1}  \bf d \left(x, x' \right)
 \leq  S_j^{x'}  + c' \bf d \left(x, x' \right).
\end{align*}
Taking the minimum over $1 \leq j \leq n$, we get that,
$\bb P$-almost surely, for any $x, x' \in \bb S_+^{d-1}$, 
\begin{align*}
\min_{1 \leq j \leq n}  S_j^x 
 \leq  \min_{1 \leq j \leq n}  S_j^{x'} + c \bf d \left(x, x' \right).  
\end{align*}
The lower bound can be obtained in the same way. 
\end{proof}

The following result is analogous to Lemma \ref{Lem_Inequality_Aoverline} but achieves a faster convergence rate due to condition \ref{Condi-FK}. This result will be essential for applying Theorem \ref{t-B 002}, particularly in handling specific functions arising in the proof of Theorem \ref{Thm-CLLT-small-x}.

Let $\rho$ be a nonnegative smooth  function with compact support in $[-1, 1]$, normalized in such a way that
$\int_{-1}^1 \rho(u)du=1$. For $\ee >0$,  define the rescaled function $\rho_{\ee}(u) = \frac{1}{\ee} \rho(\frac{u}{\ee})$, $u \in \bb R$.
Recall that $\chi_{\ee}$ is defined by \eqref{Def_chiee}.

\begin{lemma}\label{Lem_HolderNormPsi}
Assume \ref{CondiMoment} and \ref{Condi-FK}. 
For $(x,y) \in \bb S_+^{d-1} \times \bb R$, $m \geq 1$, $\ee >0$ and $H \in \scr H$, define 
\begin{align*}
\overline H_{m,\ee}(x, y) :  = 
\bb E  \left[  H * \rho_{\ee} \left( X_m^x,  y + S_m^x  \right)    
   \chi_{\ee}  \left( y + \ee + \min_{1 \leq  j \leq m}   S_j^x  \right)  \right]. 
\end{align*}
Then $\overline H_{m,\ee} \in \scr H$ 
and there exist  $0< r < 1$, $c_0 >0$ and $c_{\ee} > 0$ such that for any $x' \in \bb S_+^{d-1}$, 
\begin{align*}
\| \overline H_{m,\ee} \|_{\scr H} 
\leq  \frac{c_{\ee}}{ \sqrt{m}} \int_{ \bb R}  H_1 (z)  (1 + V_n^*(x', z))   dz  +  c_{\ee}   r^m   \|H\|_{\scr H},
\end{align*}
where 
$H_1(z) = \sup_{y \in \bb R: |y-z| \leq  c_0}  \sup_{x \in \bb S_+^{d-1}} |H(x, y)|$, $z \in \bb R$. 
\end{lemma}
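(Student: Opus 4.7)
The $\mathscr H$ norm of $\overline H_{m,\ee}$ splits as a sup-norm integral and a Lipschitz (in $x$) seminorm integral, and I plan to bound each separately. The key observation is that $\chi_\ee(y+\ee+\min_{1\leq j\leq m} S_j^x)$ vanishes unless $\min_{1\leq j\leq m} S_j^x > -y-2\ee$, i.e. unless $\tau_{x,y+2\ee}>m$. So expectations of $\overline H_{m,\ee}$-type quantities are supported on exit time events, making Lemma \ref{Cor_CoarseBound} the natural tool to convert these into integrals against $1+V_n^*$ with the $1/\sqrt m$ factor. The two extra ingredients are the strong (almost sure) contraction estimates from Lemma \ref{Lem-contractivity} that hold under \ref{Condi-FK}, and the exponential contraction \eqref{contractivity-MC} of the Markov chain $X_m^x$.

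\textbf{Sup-norm part.} From $\chi_\ee(\cdot) \leq \mathds 1_{\{\tau_{x,y+2\ee}>m\}}$ I obtain
\[
\sup_{x\in\bb S_+^{d-1}} |\overline H_{m,\ee}(x,y)| \leq \sup_{x\in\bb S_+^{d-1}}\bb E\bigl[|H|*\rho_\ee(X_m^x,y+S_m^x);\,\tau_{x,y+2\ee}>m\bigr].
\]
After a translation in $y$ to absorb the $2\ee$ shift, Lemma \ref{Cor_CoarseBound} applied to the function $(x,y)\mapsto |H|*\rho_\ee(x,y)$ yields an upper bound of the form $\tfrac{c}{\sqrt m}\int_{0}^{\infty} (|H|*\rho_\ee)_1(z)(1+V_n^*(x',z))\,dz$, and since $\rho_\ee$ is compactly supported in $[-\ee,\ee]$ one has $(|H|*\rho_\ee)_1 \leq c_\ee H_1$ once $c_0$ is chosen slightly larger than $\ee$.

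\textbf{Lipschitz part.} I will decompose $\overline H_{m,\ee}(x,y)-\overline H_{m,\ee}(x',y)$ into three telescoping pieces: (a) replacing $S_m^x$ by $S_m^{x'}$ in the second argument of $H*\rho_\ee$; (b) replacing $X_m^x$ by $X_m^{x'}$ in the first argument; (c) replacing $\min_j S_j^x$ by $\min_j S_j^{x'}$ inside $\chi_\ee$. For piece (a) I use smoothness of $\rho_\ee$ to write the difference as $\leq c_\ee |S_m^x-S_m^{x'}|\cdot |H|*|\rho_\ee'|(X_m^x,y+S_m^x+u)$ for some deterministic $|u|\leq c\,\mathbf d(x,x')$ coming from \eqref{contractivity-inequ-0}, and after dividing by $\mathbf d(x,x')$ and integrating in $y$, Lemma \ref{Cor_CoarseBound} again contributes $\tfrac{c_\ee}{\sqrt m}\int H_1(1+V_n^*)$. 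For piece (b) I use that the Hölder seminorm of $H*\rho_\ee$ in the first variable is dominated by that of $H$, together with $\mathbf d(X_m^x,X_m^{x'})\leq r^m\mathbf d(x,x')$ from \eqref{contractivity-MC}; since $\chi_\ee\leq 1$, after dividing and integrating in $y$ a Fubini argument gives a clean bound of order $r^m\|H\|_{\mathscr H}$. For piece (c), the $(1/\ee)$-Lipschitz continuity of $\chi_\ee$ combined with \eqref{contractivity-inequ} yields the pointwise bound $\tfrac{c}{\ee}\mathbf d(x,x')$, and since $|\min_j S_j^x-\min_j S_j^{x'}|\leq c$ uniformly, the difference of $\chi_\ee$ values is supported on the event $\{\tau_{x,y+C\ee}>m\}$ for some $C$, so Lemma \ref{Cor_CoarseBound} once more yields $\tfrac{c_\ee}{\sqrt m}\int H_1(1+V_n^*)$.

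\textbf{Expected obstacle.} The most delicate piece is (c), because to obtain the $1/\sqrt m$ factor one must exhibit the Lipschitz difference of the $\chi_\ee$ factors as supported on an exit-time event rather than globally. This requires carefully combining the pointwise $1/\ee$-Lipschitz bound with the uniform-in-$\omega$ bound $|\min_j S_j^x-\min_j S_j^{x'}|\leq c$ from \eqref{contractivity-inequ} (which genuinely uses \ref{Condi-FK}, unlike the $L^2$ bound in Lemma \ref{Lem-conti-square}), to realize the difference as bounded by $\tfrac{c}{\ee}\mathbf d(x,x')\cdot\mathds 1_{\{\tau_{x,y+C\ee}>m\}}\cdot |H|*\rho_\ee(X_m^{x'},y+S_m^{x'})$; only then does Lemma \ref{Cor_CoarseBound} yield the desired structure. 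Summing the three contributions produces the stated bound, with $r$ inherited from \eqref{contractivity-MC} and the constant $c_0$ from the thickening of supports involved in convolution and translation.
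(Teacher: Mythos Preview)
Your proposal is correct and follows essentially the same route as the paper. The paper also splits $\|\overline H_{m,\ee}\|_{\mathscr H}$ into the sup-norm integral (handled via $\chi_\ee \leq \mathds 1_{\{\tau_{x,y+2\ee}>m\}}$ and Lemma~\ref{Cor_CoarseBound}) and the Lipschitz seminorm integral; for the latter the paper groups your pieces (a) and (b) together as a single term $J_m$ and treats your piece (c) as $I_m$, using exactly the same ingredients---the almost sure bounds \eqref{contractivity-inequ-0} and \eqref{contractivity-inequ} from Lemma~\ref{Lem-contractivity}, the exponential contraction \eqref{contractivity-MC} for piece (b), and Lemma~\ref{Cor_CoarseBound} (or the closely related bound \eqref{bound-tau-y-bis}) to produce the $m^{-1/2}$ factor on exit-time--supported pieces. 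Your identification of the delicate point in piece (c), namely that the uniform bound $|\min_j S_j^x - \min_j S_j^{x'}| \leq c$ forces the $\chi_\ee$-difference to be supported on an event of the form $\{\tau_{x,y+c_0}>m\}$, is precisely what the paper exploits via the indicator $\mathds 1_{\{y+\min_j S_j^x \geq -c_0\}}$.
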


\begin{proof}
Without loss of generality, we assume that $H$ is nonnegative. 
 Recall that 
\begin{align}\label{Holder-Hmee-abc}
\|  \overline H_{m,\ee} \|_{\mathscr H}
 =  \int_{\bb R}   \sup_{x \in \bb S_+^{d-1}}  | \overline H_{m,\ee} \left(x, y \right)|  dy
   +  \int_{\bb R} \sup_{x, x' \in \bb S_+^{d-1}: x \neq x'}  
    \frac{| \overline H_{m,\ee} \left(x, y \right)  - \overline H_{m,\ee} \left(x', y \right)  |}{ \bf d(x, x') } dy. 
\end{align}
For the first term in \eqref{Holder-Hmee-abc}, 
by the definition of $\chi_{\ee}$ (cf.\ \eqref{Def_chiee}), we have 
\begin{align*}
\int_{\bb R}   \sup_{x \in \bb S_+^{d-1}}  | \overline H_{m,\ee} \left(x, y \right)|  dy
& \leq  \int_{\bb R}   \sup_{x \in \bb S_+^{d-1}}  \bb E  \left[  H * \rho_{\ee} \left( X_m^x,  y + S_m^x  \right)    
    \mathds 1_{ \{ \tau_{x, y + 2 \ee} > m \} }  \right]   dy   \notag\\
& =   \int_{\bb R}   \sup_{x \in \bb S_+^{d-1}}  \bb E  \left[  H * \rho_{\ee} \left( X_m^x,  y' - 2 \ee + S_m^x  \right)    
    \mathds 1_{ \{ \tau_{x, y'} > m \} }  \right]   dy'    \notag\\
& \leq  \int_{\bb R}   \sup_{x \in \bb S_+^{d-1}}  \bb E  \left[  h_{\ee} \left( y'  + S_m^x  \right)    
    \mathds 1_{ \{ \tau_{x, y'} > m \} }  \right]   dy', 
\end{align*}
where $h_{\ee}(u) = \sup_{u\in \bb R: |u-y| \leq 3\ee} \sup_{x \in \bb S_+^{d-1}} H(x, y)$
and we used the fact that $H * \rho_{\ee} ( X_m^x,  y' - 2 \ee + S_m^x ) \leq h_{\ee}(y' + S_m^x)$. 
By Lemma \ref{Cor_CoarseBound}, it follows that for any $x' \in \bb S_+^{d-1}$, 
\begin{align}\label{bound-sup-norm-aaa}
\int_{\bb R}   \sup_{x \in \bb S_+^{d-1}}  | \overline H_{m,\ee} \left(x, y \right)|  dy
\leq  \frac{c}{\sqrt{m}}  \int_{0}^{\infty}  H_1 (z)  (1 + V_n^*(x', z))  dz. 
\end{align}
For the second term in \eqref{Holder-Hmee-abc}, since the function $\chi_{\ee}$ is $1/\ee$-Lipschitz continuous on $\bb R$, 
applying Lemma \ref{Lem-contractivity} twice, we get that 
there exist constants $c_0, c > 0$ such that,  for any $\ee > 0$, $x, x' \in \bb S_+^{d-1}$, $y \in \bb R$ and $m \geq 1$,
\begin{align*}
I_m(x, x', y):& =  \left|  \chi_{\ee}  \left( y + \ee + \min_{1 \leq  j \leq m}   S_j^x \right) 
        - \chi_{\ee}  \left( y + \ee+ \min_{1 \leq  j \leq m}  S_j^{x'} \right)   \right|   \notag\\
& = \left| \chi_{\ee}  \left( y + \ee + \min_{1 \leq  j \leq m}  S_j^x \right) 
       - \chi_{\ee}  \left( y + \ee  + \min_{1 \leq  j \leq m}  S_j^{x'} \right)  \right|
     \mathds 1_{\{ y  + \min_{1 \leq  j \leq m}  S_j^x \geq - c_0 \}}   \notag\\
& \leq \frac{1}{\ee} 
\left|  \min_{1 \leq  j \leq m}  S_j^x - \min_{1 \leq  j \leq m}  S_j^{x'} \right| 
\mathds 1_{\{ y  + \min_{1 \leq  j \leq m}  S_j^x \geq - c_0 \}}    \notag\\
& \leq  \frac{c}{\ee}  \bf d(x, x')  \mathds 1_{\{ y  + \min_{1 \leq  j \leq m}  S_j^x \geq - c_0 \}}, 
\end{align*}
$\bb P$-a.s. 
By Lemma \ref{Cor_CoarseBound}, there exist constants $c, c_0, c_1 >0$ such that for any $\ee > 0$ and $m \geq 1$, 
\begin{align}\label{HolderBound1}
& \int_{\bb R} \sup_{x, x' \in \bb S_+^{d-1}: x \neq x'}  \frac{1}{ \bf d(x,x') }
\bb E  \Big[  H * \rho_{\ee} \left( X_m^x, y + S_m^x \right)  I_m(x, x', y)   \Big] dy  \notag\\
 & \leq   \frac{c}{\ee}  \int_{\bb R} \sup_{x \in \bb S_+^{d-1}} \bb E  \Big[ H( X_n^x,  y + S_n^x);  \tau_{x, y + c_0} > n  \Big]  dy   \notag\\
 & =   \frac{c}{\ee}  \int_{\bb R} \sup_{x \in \bb S_+^{d-1}} \bb E_x  \Big[ H( X_n^x,  y - c_0 + S_n^x);  \tau_{x, y} > n  \Big]  dy   \notag\\
 & \leq \frac{c_1}{\ee \sqrt{m}}  \int_{ 0 }^{\infty}  H_1 (z - c_0)  (1 + V_n^*(x', z))  dz.  
\end{align}
Recall that $\rho_{\ee}$ is a smooth function supported on $[-\ee, \ee]$. 
For any $y \in \bb R$, let $\rho_{\ee, 1}(y) = \sup_{s \in \bb R: |s| \leq \ee} |\rho_{\ee} (y + s)|$ 
and $\rho_{\ee, 2}(y) = \sup_{s \in \bb R: |s| \leq \ee} |\rho_{\ee}' (y + s)|$.
Then we have that for any $x, x' \in \bb S_+^{d-1}$, $y \in \bb R$ and $m \geq 1$, 
\begin{align*}
J_m(x, x', y):& =    \left| H * \rho_{\ee} (X_m^x, y + S_m^x)   -  H * \rho_{\ee} (X_m^{x'}, y + S_m^{x'})  \right|   \notag\\
& =   \left|  \int_{\bb R}  \rho_{\ee} (y + S_m^x - u) H(X_m^x, u) du 
      -  \int_{\bb R}  \rho_{\ee} (y + S_m^{x'} - u) H(X_m^{x'}, u) du \right|  \notag\\
 & \leq   \int_{\bb R}  \Big| \rho_{\ee} (y + S_m^{x} - u)  -  \rho_{\ee} (y + S_m^{x'} - u)  \Big|   H(X_m^{x'}, u) du
       \notag\\
 & \quad   +   \int_{\bb R}  \rho_{\ee} (y + S_m^x - u)  \Big| H(X_m^x, u) -  H(X_m^{x'}, u)  \Big|  du   \notag\\
& \leq  \rho_{\ee, 2}(y)   \left| S_m^x - S_m^{x'} \right|  \int_{\bb R} H(X_m^{x'}, u) du
    +  \rho_{\ee, 1}(y) \bf d \left( X_m^x, X_m^{x'}  \right)   \int_{\bb R} \| H(\cdot, u) \|_{\scr B} du, 
\end{align*}
$\bb P$-almost surely. 
By \eqref{contractivity-inequ-0} and \eqref{contractivity-MC}, 
there exist constants $1 < r < 1$ and $c>0$ such that 
for any $\ee >0$, $x, x' \in \bb S_+^{d-1}$, $y \in \bb R$ and $m \geq 1$, 
\begin{align*}
J_m(x, x', y)  
 \leq  c \, \rho_{\ee, 2}(y)   \bf d (x, x')  \int_{\bb R} H_1(u) du + \rho_{\ee, 1}(y)  r^m  \bf d (x, x')  \|H\|_{\scr H},  
\end{align*}
$\bb P$-almost surely. 
Therefore, using the fact that $\chi_{\ee} \leq 1$ and \eqref{bound-tau-y-bis}, we obtain
that there exist constants $1 < r < 1$ and $c>0$ such that 
for any $\ee >0$, $x, x' \in \bb S_+^{d-1}$, $y \in \bb R$ and $m \geq 1$, 
\begin{align*}
&  \bb E  \left[ J_m(x, x', y)    \chi_{\ee}   \left( y + \ee + \min_{1 \leq  j \leq m}  S_j^{x'} \right)  \right]    \notag\\
& \leq  \frac{c(1 + \max\{y, 0\})}{ \sqrt{m} }  \rho_{\ee, 2}(y)   \bf d (x, x')  \int_{\bb R} H_1(u) du
   + \rho_{\ee, 1}(y)  r^m  \bf d (x, x')  \|H\|_{\scr H}. 
 \end{align*}
Consequently, there exists a constant $1 < r < 1$ such that the following holds: for any $\ee >0$, there exists $c_{\ee} > 0$ such that 
for any  $m \geq 1$, 
\begin{align}\label{HolderBound2}
& \int_{\bb R}  \sup_{x, x' \in \bb S_+^{d-1}: x \neq x'}  \frac{1}{ \bf d(x, x') }  
        \bb E  \left[ J_m(x, x', y)    \chi_{\ee}   \left( y + \ee + \min_{1 \leq  j \leq m}  S_j^{x'} \right)  \right]  dy  \notag\\
& \leq  \frac{c_{\ee}}{ \sqrt{m} }   \int_{\bb R} H_1(u) du  +  c_{\ee}   r^m   \|H\|_{\scr H}. 
\end{align}
Putting together \eqref{bound-sup-norm-aaa}, \eqref{HolderBound1} and \eqref{HolderBound2} concludes the proof of the lemma. 
\end{proof}

\subsection{Proof of Theorem \ref{Thm-CLLT-small-x}}
Let $\rho$ be a nonnegative smooth function supported on $[-1, 1]$ such that
$\int_{-1}^1 \rho(u)du=1$ and set $\rho_{\ee}(u) = \frac{1}{\ee} \rho(\frac{u}{\ee})$ for $\ee >0$.
Since  $F \leq_{\ee} H \leq_{\ee} \widetilde{H}$, 
it holds that $F \leq  H * \rho_{\ee} \leq \widetilde{H}$. 
Set $m=\left[ n/2 \right]$ and $k = n-m.$ 
By the Markov property of the pair $(X_n^x, S_n^x)$, we have that for any $x \in \bb S_+^{d-1}$, $y \in \mathbb R$, 
$z \in \bb R_+$ and $\Delta \geq \Delta_0 >0$, 
\begin{align} \label{JJJ-markov property_aa}
I_n(x, y) :& = \bb P \Big(  y + S_n^x \in [z, z+ \Delta],  \tau_{x, y} > n  \Big)    \notag\\
& = \int_{\bb S_+^{d-1} \times \mathbb R_+}  I_m(x', y') \bb P \left(X_k^x \in dx', y + S_k^x \in dy',  \tau_{x, y} > k \right). 
\end{align}
When $y<0$, we set $I_n(x, y)=0$ for any $x \in \bb S_+^{d-1}$ and $n \geq 1$. 
Now we want to find an $\ee$-dominating function for $y' \mapsto I_m(x', y')$ 
given on the right hand side of \eqref{JJJ-markov property_aa}: 
for any $x' \in \bb S_+^{d-1}$, $y' \in \mathbb R_+$ and $|v|\leq \ee$, 
\begin{align}\label{DefImtbb}  
I_m(x', y') =  \bb E \Big[ F(y' + S_m^{x'} );  \tau_{x', y'} > m  \Big]  \leq H_{m,\ee}(x', y' + v), 
\end{align}
where 
\begin{align} \label{DefHmt}  
H_{m,\ee}(x', y') := \bb E \left[ H * \rho_{\ee} (y' + S_m^{x'}) \chi_{\ee}  \left( y' + \ee + \min_{1 \leq  j \leq m}  S_j^{x'} \right)  \right]
\end{align}
with $\chi_{\ee}$ defined by \eqref{Def_chiee}. 
Since $F \leq_{\ee} H$, 
it is easy to see that $I_m \leq_{\ee} H_{m, \ee}$ and that both $I_m$ and $H_{m, \ee}$ are integrable functions on $\bb S_+^{d-1} \times \bb R$. 
By the upper bound \eqref{last-inequality-001}, 
there exists a constant $\beta_0 >0$ such that
for any $\beta \in (0, \beta_0)$, $x \in \bb S_+^{d-1}$ and $y \in \mathbb R_+$, 
\begin{align}\label{UpperBoundhhn32}
I_n(x, y)   &  \leq   c  \frac{1 + V_n(x, y) }{ n } \| H_{m,\ee} \|_{\nu \otimes \Leb} 
+  c_{\beta} \frac{1 + V_n(x, y) }{ n^{1 + \frac{\delta}{2} - \beta} }  \| H_{m,\ee} \|_{\mathscr H}. 
\end{align}
Using a change of variable $y' + S_m = z$ and the fact that $H * \rho_{\ee} \leq \widetilde{H}$, we get
\begin{align}\label{Equa-J1-aa}
\| H_{m,\ee} \|_{\nu \otimes \Leb} & = 
    \int_{\bb S_+^{d-1} \times \bb R } 
    \bb E \left[ H * \rho_{\ee} (y' + S_m^{x'}) \chi_{\ee}  \left( y' + \ee + \min_{1 \leq  j \leq m}  S_j^{x'} \right)  \right]
        \nu(dx')  dy'  \notag\\
 & = 
    \int_{\bb S_+^{d-1} \times \bb R }   H * \rho_{\ee} (z)
    \bb E \left[  \chi_{\ee}  \left( z - S_m^{x'} + \ee + \min_{1 \leq  j \leq m}  S_j^{x'}  \right) 
         \right]    \nu(dx')  dz   \notag\\
  & \leq  
    \int_{\bb S_+^{d-1} \times \bb R }   \widetilde{H} (z)
    \bb E \left[  \chi_{\ee}  \left( z - S_m^{x'} + \ee + \min_{1 \leq  j \leq m}  S_j^{x'} \right) 
       \right]    \nu(dx')  dz. 
\end{align}
As in \eqref{inequ-dual-Sn} and \eqref{inequ-dual-Sn-bb}, 
we use condition \ref{Condi-FK} and Lemma \ref{lemma kappa 1} to get that,
$\bb P$-a.s., 
 for any $0 \leq k \leq m$, 
\begin{align}\label{Inequa-dual-Sn}
S_{m-k}^{x', *} - \gamma  \leq  - S_m^{x'} + S_k^{x'}  \leq   S_{m-k}^{x', *} + \gamma, 
\end{align}
where $\gamma = 2 \log \varkappa + \log d$ with $\varkappa$ given in condition \ref{Condi-FK},
and $S_i^{x', *}$ is defined by \eqref{def-Xn-Sn-dual}. 
By \eqref{Def_chiee} and \eqref{Inequa-dual-Sn}, we have that for any $z \in \bb R$ and $m \geq 1$, 
\begin{align}\label{bound-chi-ee}
\chi_{\ee}  \left( z - S_m^{x'} + \ee + \min_{1 \leq  j \leq m}  S_j^{x'} \right)
& \leq  \mathds 1_{ \big\{ z + 2\ee - S_m + \min_{1 \leq  j \leq m}  S_j^{x'}  \geq  0  \big\} }   \notag\\
& \leq  \mathds 1_{ \big\{ z + 2 \ee \geq 0,  z + 2 \ee + \gamma + S_1^{x', *}  \geq 0,  
 \ldots,  z + 2 \ee + \gamma + S_{m-1}^{x', *} \geq 0  \big\} }
   \notag\\
& \leq   \mathds 1_{ \big\{ z + 2 \ee \geq 0  \big\} }  \mathds 1_{ \big\{  \tau_{x', z + 2 \ee + \gamma }^* > m - 1 \big\} }  \notag\\
& =  \mathds 1_{ \big\{ z + 2 \ee \geq 0  \big\} }  \mathds 1_{ \big\{  \tau_{x', z  + \overline{\gamma} }^* > m - 1 \big\} }, 
\end{align}
$\bb P$-a.s., 
where $\overline{\gamma} = 2 \ee + \gamma$ and $\tau^*_{x, y}$ is defined by \eqref{def-tau-y-star}. 
Substituting \eqref{bound-chi-ee}  into \eqref{Equa-J1-aa} yields
\begin{align}\label{J-1-upper-abc}
\| H_{m,\ee} \|_{\nu \otimes \Leb}  
& \leq   \int_{-2 \ee}^{\infty}  \int_{\bb S_+^{d-1}}   \widetilde{H} (z)
    \bb P \left(   \tau_{x', z + \overline{\gamma} }^* > m  \right)    \nu(dx')  dz   \notag\\
 & =  \int_{-2 \ee + \overline{\gamma}}^{\infty}  \int_{\bb S_+^{d-1}}   \widetilde{H} (z' - \overline{\gamma})
    \bb P \left(   \tau_{x', z' }^* > m  \right)    \nu(dx')  dz'   \notag\\
 & \leq  c \int_{-2 \ee + \overline{\gamma}}^{\infty}  \int_{\bb S_+^{d-1}}  \widetilde{H} (z' - \overline{\gamma})
  \frac{1 + V_m^*(x', z')}{\sqrt{m}} \nu(dx')  dz'. 
\end{align}

For the last term in \eqref{UpperBoundhhn32}, 
by Lemma \ref{Lem_HolderNormPsi},
 there exists  a constant $0 < r < 1$ such that  for any $x' \in \bb S_+^{d-1}$, 
\begin{align}\label{second-term-pf-thm3}
\| H_{m,\ee} \|_{\mathscr H} \leq   c_{\ee}  \frac{ 1+ V_n(x, y) }{ \sqrt{n} } \int_{\bb R}  H_1 (z)  (1 +  V_n^*(x', z))   dz
    + c_{\ee}   r^n  \|H\|_{\scr H}. 
\end{align}
Substituting \eqref{J-1-upper-abc} and \eqref{second-term-pf-thm3} 
into \eqref{UpperBoundhhn32}, we conclude the proof of  Theorem \ref{Thm-CLLT-small-x}.



\begin{thebibliography}{99}

\bibitem{Aid13} A\" id\' ekon E. (2013). Convergence in law of the minimum of a branching random walk. 
\emph{Ann. Probab.}, 41(3A): 1362--1426. 

\bibitem{AS14} A\" id\' ekon E., Shi Z. (2014). The Seneta-Heyde scaling for the branching random walk. 
\emph{Ann. Probab.}, 42(3): 959--993. 

\bibitem{Asm-2003}  Asmussen S. (2003). Applied probability and queues. Second Edition, 
Application of Mathematics 51, \emph{Springer-Verlag}, New York. 

\bibitem{BQ16a} Benoist Y., Quint J. F. (2016). Central limit theorem for linear groups. 
\emph{Ann. Probab.}, 44(2): 1308--1340.

\bibitem{BQ16b} Benoist Y., Quint J. F. (2016). Random walks on reductive groups. \emph{Springer International Publishing}.

\bibitem{BertDoney94} Bertoin J.,  Doney. R. A. (1994). On conditioning a random walk to stay nonnegative. 
\textit{Ann. Probab.},  22(4): 2152--2167.


\bibitem{Borovk62} Borovkov A. A. (1962). 
New limit theorems for boundary-valued problems for sums of independent terms. 
\textit{Sib. Math. J.},  3(5): 645--694.



\bibitem{Borovkov04a}  Borovkov A. A. (2004).  On the Asymptotic
Behavior of the Distributions of First-Passage Times, I. \emph{Mathematical Notes}, 75(1): 23--37. 
Translated from Matematicheskie Zametki (2004), 75(1): 24--39.

\bibitem{Borovkov04b}  Borovkov A. A. (2004).  On the Asymptotic
Behavior of Distributions of First-Passage Times, II.  \emph{Mathematical Notes}, 75(3): 322--330. 
Translated from Matematicheskie Zametki (2004), 75(3): 350--359.


\bibitem{Bolth} Bolthausen E. (1972). 
On a functional central limit theorem for random walk conditioned to stay positive. 
\emph{Ann. Probab.}, 4(3): 480--485.


\bibitem{BS2016} Buraczewski D., Mentemeier S. (2016). Precise large deviation results for products of random matrices. 
\emph{Ann. Inst. Henri Poincar\'e Probab. Stat.}, 52(3): 1474--1513.

\bibitem{Carav05} Caravenna F. (2005). A local limit theorem for random walks conditioned to stay positive. 
\emph{Probab. Theory Related Fields}, 133(4): 508--530. 

\bibitem{DSW18} Denisov D., Sakhanenko A., Wachtel V. (2018). 
First-passage times for random walks with nonidentically distributed increments. 
\emph{Ann. Probab.}, 46(6): 3313--3350.

\bibitem{DTW24} Denisov D., Tarasov A., Wachtel V. (2024). 
Expansions for random walks conditioned to stay positive. 
\textit{arXiv:2401.09929} 

\bibitem{DTW24b} Denisov D., Tarasov A., Wachtel V. (2024). 
Asymptotic expansions for normal deviations of random walks conditioned to stay positive. 
\textit{arXiv:2412.09145} 


\bibitem{Den Wacht 2008} Denisov D., Wachtel V. (2010). 
Conditional limit theorems for ordered random walks. 
\emph{Electron. J. Probab.}, 15(11), 292--322. 

\bibitem{Den Wachtel 2015} Denisov D., Wachtel V. (2015).  Random walks in cones. 
\emph{Ann. Probab.}, 43(3): 992--1044.




\bibitem{Don12} Doney R. A. (2012). Local behaviour of first passage probabilities. 
\emph{Probab. Theory Related Fields}, 152(3-4): 559--588.






\bibitem{Eppel-1979} Eppel M.S. (1979). A local limit theorem for the first overshoot. 
\emph{Siberian Math. J.},  20: 130--138. 

\bibitem{Fel64} Feller W. (1964). An Introduction to Probability Theory and Its Applications. Vol. 2. 
\emph{Wiley}, New York.

\bibitem{FK60} Furstenberg H., Kesten H. (1960). 
Products of Random Matrices.
 \emph{Ann. Math. Statist.}, 31(2): 457--469. 

%


\bibitem{Gned48} Gnedenko B. V. (1948). On a local limit theorem of the theory of probability. 
\textit{Uspekhi Mat. Nauk},  3(25): 187--194.



%

\bibitem{GLL20}  Grama I., Lauvergnat R., Le Page \'{E}. (2020). 
Conditioned local limit theorems for random walks defined on finite Markov chains. 
\emph{Probab. Theory Related Fields}, 176(1-2): 669--735.



\bibitem{GLP17} Grama I., Le Page \'{E}., Peign\'e M. (2017). 
Conditioned limit theorems for products of random matrices. 
\emph{Probab. Theory Related Fields}, 168(3): 601--639.


\bibitem{GMX22} Grama I., Mentemeier S., Xiao H.: 
The extremal position of a branching random walk on the general linear group. 
\textit{Ann. Inst. Henri Poincar\'e Probab. Stat.} To appear, 2022. \textit{arXiv:2206.04941},

\bibitem{GMX25} Grama I., Mentemeier S., Xiao H. (2025). 
Spinal decomposition, martingale convergence and the Seneta-Heyde scaling for matrix branching random walks. \textit{In preparation}.


\bibitem{GQX21} Grama I., Quint J. F., Xiao H. (2024). 
Conditioned limit theorems for hyperbolic dynamical systems. 
\emph{Ergodic Theory Dynam. Systems}, 44(1): 50--117.  


\bibitem{GQX24a} Grama I., Quint, J.-F., Xiao, H.:   
Conditioned random walks on linear groups I: construction of the target harmonic measure. 
arXiv:2410.05812. (2024). 

\bibitem{GQX24b} Grama I., Quint, J.-F., Xiao, H.:   
Conditioned random walks on linear groups II: local limit theorems. 
arXiv:2410.05897. (2024). 

\bibitem{GX21}  Grama I., Xiao H. (2025). 
Conditioned local limit theorems for random walks on the real line. 
\emph{Ann. Inst. Henri Poincar\'e Probab. Stat.} 
61(1): 403--456. 



\bibitem{GX-2024-CCLT} Grama, I., Xiao, H. (2024):  
Gaussian heat kernel approximation for conditioned random walks. 
arXiv:2412.08932. 

\bibitem{GX25} Grama I., Xiao H. (2025). 
Local limit theorems for conditioned random walks by the heat kernel approximation. 
\emph{Preprint}





\bibitem{LeP82} Le Page \'{E}.: Th\'{e}or\`{e}mes limites pour les produits de matrices al\'{e}atoires. 
 \emph{In Probability measures on groups}. Springer Berlin Heidelberg, 258-303, 1982.







\bibitem{Hae84}  Haeusler E. (1984).  
An exact rate of convergence in the functional central limit theorem for special martingale difference arrays. 
\emph{Z. Wahrsch. Verw. Gebiete}, 65(4): 523--534. 


\bibitem{Hen97} Hennion H. (1997). 
Limit theorems for products of positive random matrices.
\emph{Ann. Probab.}, 25(4): 1545--1587. 


\bibitem{HH01} Hennion H., Herv\'e L. (2001). 
Limit theorems for Markov chains and stochastic properties of dynamical systems by quasi-compactness. 
Vol. 1766 of \emph{Lecture Notes in Mathematics}. Springer-Verlag, Berlin.

\bibitem{HH08}  Hennion H., Herv\'e L. (2008). 
Stable laws and products of positive random matrices. 
\emph{J. Theoret. Probab.},  21(4): 966--981. 


\bibitem{Igle74} Iglehart D. L. (1974). 
Functional central limit theorems for random walks conditioned to stay positive. 
\emph{Ann. Probab.},  2(4): 608--619. 


\bibitem{KV17} Kersting G., Vatutin V.A. (2017). Discrete time branching processes in random environment. ISTE Limited.

\bibitem{Kes73} Kesten H. (1973). 
Random difference equations and renewal theory for products of random matrices.
\emph{Acta Math.}, 131(1): 207--248. 







\bibitem{Levy37} L\'{e}vy P. (1937). 
Th\'{e}orie de l'addition des variables al\'{e}atoires. 
Gauthier-Villars.

%


\bibitem{Peigne Pham 2023} Peign\'e M., Pham T. C. (2024).
 A conditioned local limit theorem for nonnegative random matrices. 
 \emph{Journal of Theoretical Probability}, 37: 1882--1901. 
 
 \bibitem{Peigne Pham 2024EJP} Peign\'e M., Pham T. C. (2024).
The survival probability of a weakly subcritical multitype branching process in iid random environment. 
 \emph{Electronic Journal of Probability}, 29: 1--20.

\bibitem{Pham18} Pham T. C. (2018). 
Conditioned limit theorems for products of positive random matrices. 
\emph{ALEA Lat. Am. J. Probab. Math. Stat.}, 15(1): 67--100. 





\bibitem{Spitzer} Spitzer F. (1976).  Principles of Random Walk.
Second edition. \emph{Springer}.





\bibitem{VatWacht09} Vatutin V. A., Wachtel V. (2009). 
Local probabilities for random walks conditioned to stay positive, 
\emph{Probab. Theory Related Fields},  143(1-2): 177--217. 


\end{thebibliography}
\end{document}